\newcommand{\nc}{\newcommand}
\newcommand{\delete}[1]{}
\newtheorem{theorem}{Theorem}[section]
\newtheorem{prop}[theorem]{Proposition}
\newtheorem{lemma}[theorem]{Lemma}
\newtheorem{coro}[theorem]{Corollary}
\newtheorem{prop-def}[theorem]{Proposition-Definition}
\theoremstyle{definition}
\newtheorem{defn}[theorem]{Definition}
\newtheorem{remark}[theorem]{Remark}
\newtheorem{exam}[theorem]{Example}
\nc{\supcybe}{super CYBE\xspace}    
\nc{\parityrev}{parity reverse\xspace}
\nc{\parityrevs}{parity reverses\xspace}
\nc{\Parityrev}{Parity reverse\xspace}
\nc{\rs}{$r$-matrices\xspace}
\nc{\sr}{super $r$-matrix\xspace}
\nc{\srs}{super $r$-matrices\xspace}
\nc{\pansym}{pan-supersymmetric\xspace}
\nc{\mT}{\mathbf{t}}
\nc{\bfr}{\mathbf{r}}
\nc{\bfT}{\mathbf{T}}
\nc{\Ext}[1]{\widehat{#1}}
\newcommand{\F}{\mathbb{F}}
\newcommand{\op}{$\mathcal {O}$-operator\xspace}
\newcommand{\A}{\mathcal{A}}
\newcommand{\G}{\frak g}
\newcommand{\Z}{\mathbb{Z}}
\newcommand{\ad}{\mathrm{ad}}
\newcommand{\Gl}{\mathfrak{gl}}
\newcommand{\id}{\mathrm{id}}
\newcommand{\ra}{\longrightarrow}
\newcommand{\OO}{\mathcal{O}}
\newcommand{\Sol}{\mathcal{S}}
\newcommand{\I}{\mathbf{I}}
\newcommand{\Hom}{\mathrm{Hom}}
\nc{\tred}[1]{\textcolor{red}{#1}}
\nc{\tblue}[1]{\textcolor{blue}{#1}}
\nc{\tgreen}[1]{\textcolor{green}{#1}}
\nc{\tpurple}[1]{\textcolor{purple}{#1}}
\nc{\btred}[1]{\textcolor{red}{\bf #1}}
\nc{\btblue}[1]{\textcolor{blue}{\bf #1}}
\nc{\btgreen}[1]{\textcolor{green}{\bf #1}}
\nc{\btpurple}[1]{\textcolor{purple}{\bf #1}}
\nc{\rx}[1]{\textcolor{blue}{RX:#1}}
\nc{\cm}[1]{\textcolor{red}{CM:#1}}
\nc{\li}[1]{\textcolor{purple}{#1}}
\nc{\lir}[1]{\textcolor{purple}{Li:#1}}
\nc{\rxd}[1]{{}}
\nc{\cmd}[1]{{}}
\nc{\lid}[1]{{}}
\nc{\lird}[1]{{}}
\nc{\rev}[1]{\textcolor{blue}{#1}}
\begin{document}

\title[Parity duality of super $r$-matrices and $\mathcal {O}$-operators]{Parity duality of super $r$-matrices via $\mathcal O$-operators and pre-Lie superalgebras}

\author{Chengming Bai}

\address{Chern Institute of Mathematics, Nankai University,
Tianjin 300071, P.R. China}

\email{baicm@nankai.edu.cn}

\author{Li Guo}

\address{Department of Mathematics and Computer Science, Rutgers University, Newark, NJ 07102}

\email{liguo@rutgers.edu}

\author{Runxuan Zhang}

\address{School of Mathematics and Statistics, Northeast Normal University, Changchun, Jilin 130024, P.R. China}

\email{zhangrx728@nenu.edu.cn}

\date{\today}

\begin{abstract}
This paper studies super $r$-matrices and operator forms of the super classical Yang-Baxter equation. First by a unified treatment, the classical correspondence between $r$-matrices and $\mathcal{O}$-operators is generalized to a correspondence between homogeneous super $r$-matrices and homogeneous $\mathcal{O}$-operators. Next, by a parity reverse of Lie superalgebra representations, a duality is established between the even and the odd $\mathcal{O}$-operators, giving rise to a parity duality among the induced super $r$-matrices. Thus any homogeneous $\OO$-operator or any homogeneous super $r$-matrix with certain supersymmetry produces a parity pair of super $r$-matrices, and generates an infinite tree hierarchy of homogeneous super $r$-matrices. Finally, a pre-Lie superalgebra naturally defines a parity pair of $\mathcal{O}$-operators, and thus a parity pair of super $r$-matrices.
\end{abstract}

\subjclass[2010]{
17B10, 
17B38, 
16T25,   
17B81, 
81R05 
17B80, 
17B60,
}

\keywords{Lie superalgebra; super classical Yang-Baxter equation; super $r$-matrix; $\mathcal {O}$-operator; relative Rota-Baxter operator; pre-Lie superalgebra}

\maketitle

\tableofcontents

\allowdisplaybreaks

\section{Introduction}
This paper studies super $r$-matrices, that is,  solutions of the super classical Yang-Baxter equation in Lie
superalgebras. A parity duality for its operator forms leads to a duality between even and odd super $r$-matrices. Pre-Lie superalgebras and $\OO$-operators are used to provide both even and odd super $r$-matrices.

\vspace{-.3cm}

\subsection{The CYBE  and its operator approach}
The classical Yang-Baxter equation (CYBE) is the semi-classical
limit of the quantum Yang-Baxter equation. Its independent
importance lies in its crucial role in broad areas such as
symplectic geometry, integrable systems, quantum groups and
quantum field theory (see~\cite{BGN,CP, Dr,Ji,  KS} and the references therein). Its solution has inspired the remarkable works of Belavin, Drinfeld
and others, and the important notions of Lie bialgebras and Manin
triples~\cite{BD}.

Let $\G$ be a Lie algebra and
$r=\sum_{i}x_{i}\otimes y_{i}$ for $x_i, y_i\in \G$.
The tensor form   of the CYBE  is
\begin{equation}\label{eq:1}
    [[r,r]]:=[r_{12}, r_{13}]+[r_{12}, r_{23}]+[r_{13}, r_{23}]=0,
\end{equation}
where
\begin{equation}
    r_{12}:=\sum_{i}x_{i}\otimes y_{i}\otimes 1,
    r_{13}:=\sum_{i}x_{i}\otimes 1\otimes y_{i},
    r_{23}:=\sum_{i}1\otimes x_{i}\otimes y_{i}.
\end{equation}
Here $\G$ is embedded in its universal
enveloping algebra with unit $1$.
A solution of the CYBE is  called a classical
$r$-matrix (or simply an $r$-matrix).

Since the early stage of the study, it has been found necessary
to expand the tensor form  of the CYBE to operator
equations. After Semenov-Tian-Shansky~\cite{STS} showed that
skew-symmetric \rs in certain Lie algebras can be characterized as
linear operators satisfying a (Rota-Baxter) identity, the notion
of an $\mathcal O$-operator (also called a relative Rota-Baxter operator) of a Lie algebra was introduced by
Kupershmidt~\cite{Ku3} as a natural generalization of the CYBE,
and a skew-symmetric $r$-matrix is interpreted as an $\mathcal
O$-operator associated to the coadjoint representation. In the
opposite direction, it was shown in~\cite{Bai}
that any $\mathcal O$-operator gives an $r$-matrix  in a semi-direct product Lie algebra.
Furthermore,  pre-Lie algebras, in addition to their independent
interests in deformation theory, geometry, combinatorics, quantum field
theory and operads~\cite{Bu,Man}, naturally give rise to $\mathcal
O$-operators and hence provide \rs~\cite{Bai,TBGS}.
Independently, the Rota-Baxter identity appeared in the algebraic approach of Connes-Kreimer to renormalization of quantum field theory~\cite{CK} as a fundamental structure.

\vspace{-.2cm}

\subsection{The  \supcybe  and the imbalance between even and odd solutions}
The super classical Yang-Baxter equation (\supcybe) in its tensor form is given by Eq.~\eqref{eq:1}
except that the commutators are taken in the superspaces:
{\small
\begin{eqnarray}
   &&[r_{12},r_{13}]:=\sum_{i,j}(-1)^{|y_i||x_j|}[x_{i},x_j]\otimes y_{i}\otimes y_j,\notag\\
  &&[r_{12},r_{23}]:=\sum_{i,j}x_{i}\otimes [y_{i},x_j]\otimes y_j,\notag\\
\notag &&[r_{13},r_{23}]:=\sum_{i,j}(-1)^{|y_i||x_j|}x_{i}\otimes x_j\otimes [y_{i},y_j].
\end{eqnarray}
}
We call a solution of the \supcybe  a {\bf \sr}.

Since its appearance in the 1990s, the \supcybe  has
attracted considerable attention and has been applied widely. In \cite{ZGB}, the \supcybe  was studied by examining the classical limit of the quantum supergroup equations, and the possibility of using \srs to construct integrable supersymmetric models was also discussed.
The super version of the notion of Poisson manifold and its quantization were studied in \cite{An}, where the 2-tensors providing Poisson brackets were characterized  as those satisfying the \supcybe.
The work~\cite{GZB} revealed a close relationship between the \supcybe and Lie superbialgebras.
A universal quantization of Lie superbialgebras has been given in \cite{Geer} and it was  proved that
the Etingof-Kazhdan quantization is isomorphic to the Drinfeld-Jimbo type quantization for a simple Lie superalgebra of type $A$-$G$.
Lie superbialgebras,  as the underlying symmetry algebra, also play an important role in the integrable structure of AdS/CFT correspondence \cite{BS}.

However, there is a notable disparity between the even and odd
super $r$-matrices and a lack of connection between these two types of super $r$-matrices.
Most of the super
$r$-matrices obtained so far are even~\cite{An, BB, ER, GZB, Ka1}.
Furthermore, the
effective $\OO$-operator construction of $r$-matrices was expanded to constructing only even super $r$-matrices. In
particular, pre-Lie superalgebras were used to construct even
 $\OO$-operators and then even super
$r$-matrices~\cite{WHB}.
In contrast, little is known about the construction of odd super $r$-matrices.
It was not known how an $\OO$-operator could give an odd super $r$-matrix, nor were  there  natural selections of odd $\OO$-operators.

It is desirable to obtain a good understanding of the odd super $r$-matrices and their relationship with the even super $r$-matrices. In addition to their theoretical significance, odd super $r$-matrices have found applications in areas such as Kronecker elliptic functions and universal $R$-matrices~\cite{KT,LOZ}.
Moreover, while even  skew-supersymmetric super $r$-matrices induce Lie superbialgebras, it is
expected that odd  supersymmetric super $r$-matrices induce odd Lie superbialgebras introduced in \cite{V}, and a relation between the even and odd super $r$-matrices could shed light on relating the even and odd Lie superbialgebras.

\vspace{-.4cm}

\subsection{Parity duality for super $r$-matrices}
In this paper, we show that, the operator approach not only furnishes a broader context to study the \supcybe, but also provides an effective procedure to obtain both even and odd solutions by a parity duality of $\OO$-operators.

To begin with, by a unified treatment, we interpret the
homogeneous super $r$-matrices (that is, even and odd super
$r$-matrices) satisfying a certain supersymmetric condition
(called pan-supersymmetry) as the homogeneous $\mathcal
O$-operators with the same parity associated to the coadjoint
representations. Conversely, any homogeneous $\mathcal O$-operator
of a Lie superalgebra gives a homogeneous super $r$-matrix with
the same parity in a semi-direct product Lie superalgebra.

More significantly, the simultaneously developed yet seemingly unrelated even super $r$-matrices and odd super $r$-matrices turn out to be closely interconnected. In fact the connection is induced by a one-one correspondence between the even $\OO$-operators and the odd $\OO$-operators, called the {\bf parity duality}, made possible by the \parityrev representation of a
given representation of a Lie superalgebra.

The parity duality matches any homogeneous $\OO$-operator $T$ with another homogeneous $\OO$-operator $T^s$ with the opposite parity.
Then by the above uniform approach, a homogeneous $\OO$-operator
always gives a parity pair of super $r$-matrices $r_T$ and $r_{T^s}$ in enlarged Lie superalgebras by a semi-direct product
construction. The correspondence between $r_T$ and $r_{T^s}$ can
also be regarded as a parity duality between those super $r$-matrices coming from $\OO$-operators. For distinction, we call the
duality between the $\OO$-operators $T$ and $T^s$ the \textbf{$\OO$-operator
duality}, and the duality between their induced super $r$-matrices $r_T$ and $r_{T^s}$ the \textbf{$r$-matrix duality}. In summary we have
\begin{equation}
    \begin{split}
        \xymatrix{
            T \ar@{<->}[d] \ar@{<->}[rrr]^{\OO\text{-operator duality}} &&&T^s \ar@{<->}[d]   \\
            r_T  \ar@{<->}[rrr]^{r\text{-matrix duality}}   &&&r_{T^s}
        }
    \end{split}
    \label{eq:dual}
\end{equation}
It is worth stressing that the parity duality between super $r$-matrices holds only for those induced by $\OO$-operators, exhibiting a new advantage, distinctively for Lie superalgebras, of the approach to the \supcybe by $\mathcal O$-operators.

These dualities have several direct yet significant applications.
First, since a pre-Lie superalgebra naturally gives rise to an
even $\OO$-operator of the sub-adjacent Lie superalgebra, it also
gives an odd $\OO$-operator, and hence defines an odd (as well as an
even) super $r$-matrix.

Second, any pan-supersymmetric \sr , no matter even or odd,
 corresponds to an $\OO$-operator associated
to the coadjoint representation of the Lie superalgebra, which
gives rise to a pan-supersymmetric super $r$-matrix with the
same parity in an enlarged Lie superalgebra. By the $\OO$-operator
duality, there is also an $\OO$-operator with the opposite parity
and, as noted just above, gives rise to a pan-supersymmetric
super $r$-matrix with the latter parity in another enlarged Lie
superalgebra. Furthermore, by the same treatment, the two (even
and odd) pan-supersymmetric super $r$-matrices lead to two
more parity pairs of $\OO$-operators and then to two more parity
pairs of pan-supersymmetric super $r$-matrices in the next
level of enlarged Lie superalgebras. This process can be iterated,
eventually leading to a tree family of super $r$-matrices. See the diagram in Eq.~(\ref{di:tree}).

Finally, in the special case that the $\mathcal O$-operators are
associated to a self-reversing representation, that is, a
representation  isomorphic to its parity reversing
representation, the $\mathcal O$-operator gives a pair of
even and odd super $r$-matrices in
the \emph{same} enlarged Lie superalgebras by the semi-direct product
construction. This means that the $r$-matrix duality for such
super $r$-matrices holds in the same Lie superalgebra.

\vspace{-.2cm}

\subsection{Layout of the paper}

Here is an outline of the paper.

In Section~\ref{sec:cybe}, we  interpret a pan-supersymmetric \sr
as an $\mathcal O$-operator associated to the coadjoint
representation  (Theorem~\ref{thm:1}). As a consequence, there is
a correspondence between 2-cocycles on  Lie superalgebras  and
\srs in the non-degenerate and pan-supersymmetric case (Theorem~\ref{prop:3.9}).
Conversely, we give a construction of a pan-supersymmetric
\sr in a semi-direct product Lie superalgebra from an $\mathcal
O$-operator (Theorem~\ref{thm:2}).

\smallskip

In  Section~\ref{ss:dual}, we
first give the notion of the \parityrev of a representation
of a Lie superalgebra, obtained from the original representation
by a suspension process. We then obtain a natural one-one
correspondence between  $\OO$-operators with any given parity
associated to a representation and $\OO$-operators with the opposite
parity associated to the \parityrev representation
(Theorem~\ref{thm:sV}).
 Moreover,  we establish the
duality between even and odd \srs in semi-direct product Lie
superalgebras constructed from $\OO$-operators (Theorem~
\ref{cor:cons} and Corollary~\ref{co:rrs}). In
Section~\ref{ss:tree}, together with the procedure of getting  an
$\OO$-operator from a pan-supersymmetric \sr
in Section~\ref{sec:cybe},
 we obtain a replicating procedure that starts from a pan-supersymmetric \sr with any parity and arrives at a parity pair of pan-supersymmetric super $r$-matrices in semi-direct product Lie superalgebras. Repeating this procedure yields a whole family
of \srs, parameterized by the vertices of an infinite planar
binary tree (the diagram in Eq.~(\ref{di:tree})).
Section~\ref{ss:selfrev} focuses on self-reversing
representations. Any representation can be extended
to a self-reversing representation. For a self-reversing
representation, the duality is taken between the $\OO$-operators (resp. their derived \srs) for the same representations (resp. in the
same Lie superalgebras) (Corollary~\ref{cor:equ}).

\smallskip

In Section~\ref{sec:prelie}, we give a close relationship
between even or odd invertible $\mathcal O$-operators and pre-Lie
superalgebras (Theorem~\ref{prop:2.3}). In particular, the
identity map on a pre-Lie superalgebra induces an even $\mathcal
O$-operator of the sub-adjacent Lie superalgebra and hence, as a
direct consequence of the $\OO$-operator duality, an odd $\OO$-operator.
Subsequently, a single pre-Lie superalgebra gives rise to a parity
pair of super $r$-matrices in semi-direct product Lie superalgebras (Proposition~\ref{coro:4.5}).

\smallskip

\noindent {\bf Conventions:} Throughout this paper, take $\Z_2=\Z/(2)=\{\bar 0, \bar 1\}$. A vector superspace $V$ is  $\Z_2$-graded: $V=V_{\bar 0}\oplus V_{\bar 1}$. A homogeneous element $x\in V$ is called {\bf even} or {\bf odd} if its degree $|x|$ is zero or one.
All  vector superspaces and superalgebras are assumed to be finite-dimensional over an algebraically closed field $\mathbb F$ of characteristic zero.
The composition of maps $\phi$ and $\psi$ is denoted by $\phi\psi$.

\section{Super \rs and $\mathcal {O}$-operators of Lie superalgebras}
\label{sec:cybe} In this section, we establish the relationship between homogeneous super $r$-matrices and $\OO$-operators of a Lie superalgebra associated to a representation,
generalizing the connections for the $r$-matrices and even super $r$-matrices~\cite{Bai,Ku3,STS,WHB}. On the one hand, we prove that under the
pan-supersymmetric condition, a \sr is equivalent to an  $\mathcal O$-operator of a Lie superalgebra associated to the coadjoint representation. On the
other hand, starting from an $\mathcal O$-operator that is either
even or odd, we construct a \sr with the same parity in
a semi-direct product Lie superalgebra.

\subsection{$\OO$-operators of Lie superalgebras}
Let $\G_1$ and $\G_2$ be Lie superalgebras. A
homomorphism of Lie superalgebras is an even linear map $f:\G_1
\ra \G_2$ satisfying $f([x, y])=[f(x), f(y)]$ for all $x,y\in
\G_1$.
 A \textbf{representation} of a Lie
superalgebra $\G=\G_{\bar{0}}\oplus \G_{\bar{1}}$ in a vector
superspace $V=V_{\bar 0}\oplus V_{\bar 1}$ is a pair $(V, \rho)$
for a Lie superalgebra homomorphism $\rho:\G\longrightarrow
\mathfrak{gl}(V)$. We also say that $V$ is a \textbf{$\G$-module}.
Two representations $(V_1,\rho_1)$ and $(V_2,\rho_2)$ of a Lie
superalgebra $\G$ are called {\bf isomorphic} if there is an even
linear isomorphism $\phi: V_1\longrightarrow V_2$ satisfying
\begin{equation}
    \phi \rho_1(x)=\rho_2(x) \phi,\;\;\forall x\in \G.\label{eq:equivalence}
\end{equation}

\begin{defn} \label{defn:2.9}
Let ${\frak g}$ be a Lie superalgebra and $(V, \rho)$ be a representation of $\G$.  A homogeneous linear map $T: V \longrightarrow
{\frak g}$ is called an \textbf{$\mathcal {O}$-operator} of
${\frak g}$ associated to  $(V,\rho)$ if $T$ satisfies
\begin{equation} \label{eq:oop}
[T(v), T(w)] = T\Big((-1)^{(|T|+|v|)|T|}\rho(T(v))w-
(-1)^{|v|(|T|+|w|)}\rho(T(w))v\Big),\;\;\forall v, w \in V.
\end{equation}
Here by convention, the equation is required to be held for homogeneous elements and then extended to all elements by linearity. \emph{The same convention is applied to all equations for vector superspaces.}
\end{defn}

\begin{remark}
The choice of signs in Eq.~\eqref{eq:oop} is guided by the Koszul sign rule: whenever two homogeneous elements $u$ and $v$ are interchanged, the result is multiplied by $(-1)^{|u||v|}$. In this sense, the sign $(-1)^{(|T|+|v|)|T|}$ appears in front of the first term on the right-hand side of Eq.~\eqref{eq:oop} because the term can be obtained from the left-hand side by interchanging $T(v)$ and the map $T$ in $T(w)$. 
Likewise the sign $(-1)^{|v|(|T|+|w|)}$ appears in Eq.~\eqref{eq:oop}
from interchanging $v$ and $T(w)$. 
Note that, for the second term on the right-hand side of Eq.~\eqref{eq:oop}, we can alternatively first interchange $T(v)$ and $T(w)$ on the left-hand side, and then interchange $T(w)$ and the map $T$ in $T(v)$. This leads to a sign $(-1)^{|T(v)||T(w)|+|T||T(w)|}$, which agrees with the sign $(-1)^{|v|(|T|+|w|)}$ thanks to the identity $(-1)^{|T(v)||T(w)|+|T||T(w)|}=(-1)^{|v|(|T|+|w|)}.$
\end{remark}

An $\OO$-operator is called {\bf even} (resp. {\bf odd}) if it is an even (resp. odd) linear map.
Let $\OO_{\bar 0}(\G;V,\rho)$ and $\OO_{\bar 1}(\G;V,\rho)$ denote the sets of even  and odd $\OO$-operators of $\G$ associated to $(V,\rho)$ respectively.

As a special case, if $R$ is an $\mathcal {O}$-operator of  a Lie
superalgebra ${\frak g}$ associated to the adjoint representation,
then $R$ is called a \textbf{Rota-Baxter operator} of weight 0
on ${\frak g}$. Thus a Rota-Baxter
operator $R$ of weight 0 on $\G$ is defined by
$$[R(x), R(y)] = R\Big((-1)^{(|R|+|x|)|R|}[R(x),y]+[x,R(y)]\Big),\;\;\forall x, y \in \G.$$
In the literature, an $\OO$-operator is also called a relative Rota-Baxter operator. 

The following result is straightforward by transporting the structures.

\begin{lemma}\label{prop3.2}
Let $\phi: V_1\ra V_2$ be an isomorphism between two representations $(V_1,\rho_1)$ and $(V_2,\rho_2)$ of a Lie  superalgebra $\G$.
Then for each  $T\in \OO_{\alpha}(\G;V_2,\rho_2)$,  the composition  $T \phi$ is in $\OO_{\alpha}(\G;V_1,\rho_1)$, giving rise to a one-one correspondence between $\OO_{\alpha}(\G;V_1,\rho_1)$ and $\OO_{\alpha}(\G;V_2,\rho_2)$
for $\alpha\in \Z_2$.
\end{lemma}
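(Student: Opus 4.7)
The plan is to verify directly that $T\phi$ satisfies the defining identity \eqref{eq:oop} of an $\OO$-operator, by transporting $T$'s identity via the intertwining property \eqref{eq:equivalence} and the fact that $\phi$, being an isomorphism of representations, is an even linear map.

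First, I would record two easy observations. Since $\phi$ is even, $|T\phi|=|T|=\alpha$, so $T\phi$ has the correct parity. Moreover, for any homogeneous $v\in V_1$ we have $|\phi(v)|=|v|$, so every sign of the form $(-1)^{(|T|+|v|)|T|}$ or $(-1)^{|v|(|T|+|w|)}$ is the same whether computed in $V_1$ or in $V_2$.

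Next, for homogeneous $v,w\in V_1$ I would compute $[(T\phi)(v),(T\phi)(w)]=[T(\phi v),T(\phi w)]$ and apply Definition~\ref{defn:2.9} to the pair $\phi v,\phi w\in V_2$, which is legitimate because $T\in\OO_{\alpha}(\G;V_2,\rho_2)$. This yields
\[
[T(\phi v),T(\phi w)]=T\!\left((-1)^{(|T|+|v|)|T|}\rho_{2}(T(\phi v))\phi w-(-1)^{|v|(|T|+|w|)}\rho_{2}(T(\phi w))\phi v\right).
\]
Then I would use the intertwining relation \eqref{eq:equivalence} in the form $\rho_{2}(x)\phi=\phi\rho_{1}(x)$ with $x=T(\phi v)=(T\phi)(v)$ and $x=T(\phi w)=(T\phi)(w)$ to pull $\phi$ outside, obtaining exactly \eqref{eq:oop} for $T\phi$ acting on $(V_1,\rho_1)$. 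This shows $T\phi\in\OO_{\alpha}(\G;V_1,\rho_1)$.

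Finally, for the one-one correspondence I would note that $\phi^{-1}:V_{2}\to V_{1}$ is again an even isomorphism of representations (the inverse of \eqref{eq:equivalence} holds automatically), so the construction in the previous paragraph, applied to $\phi^{-1}$, yields a map $\OO_{\alpha}(\G;V_{1},\rho_{1})\to\OO_{\alpha}(\G;V_{2},\rho_{2})$, $S\mapsto S\phi^{-1}$, which is a two-sided inverse to $T\mapsto T\phi$. There is no real obstacle here: the only thing to watch is that the Koszul signs carry over unchanged, which is guaranteed by the evenness of $\phi$.
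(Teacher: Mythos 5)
Your proposal is correct and is precisely the "transporting the structures" argument that the paper invokes without writing out: the paper's proof of Lemma~\ref{prop3.2} consists of the single remark that the result is straightforward, and your computation (using the evenness of $\phi$ to preserve all Koszul signs, the intertwining relation to pull $\phi$ out of $\rho_2$, and $\phi^{-1}$ to invert the correspondence) is exactly what that remark compresses. No gaps.
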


We recall some  facts on  vector superspaces from \cite{CW,Sc}. Let  $V=V_{\bar 0}\oplus
V_{\bar 1}$ and $W=W_{\bar 0}\oplus W_{\bar 1}$ be two vector superspaces over a field $\F$.
A bilinear form $(\; , \;): V\times W \longrightarrow \mathbb F$
is called
 \textbf{odd}  if $(V_{\bar{0}}, W_{\bar{0}})=(V_{\bar{1}}, W_{\bar{1}})=0$, and
 \textbf{even}  if $(V_{\bar{0}}, W_{\bar{1}})=(V_{\bar{1}}, W_{\bar{0}})=0$;
\textbf{non-degenerate} if  $(v, w)=0$
for all $w\in W$ implies $v=0$, and $(v, w)=0$
for all $v\in V$ implies $w=0$.
The linear dual $V^*={\rm Hom}(V, \mathbb F)$  of $V$ inherits a
$\mathbb{Z}_2$-gradation $V^*=V^*_{\bar
0}\oplus V^*_{\bar 1}$ with
\begin{equation}\label{eq:2.3}
    V^*_\alpha:=\big\{u^*\in V^*| u^*(V_{\alpha+\bar
        1})=\{0\}\big\},\;\; \forall \alpha\in \Z_2.
\end{equation}
Given a linear form $u^*\in V^*$ and a vector $v\in V$, the resulting $u^*(v)$ is also denoted by $\langle u^*,v\rangle$.  It defines an even non-degenerate bilinear form
$\langle -,-\rangle:V^* \times V\longrightarrow \mathbb F$, which is called the canonical pairing.
The canonical pairings induces a pairing  on $V\times V^*$ by
\begin{equation}\label{eq:2.7}\langle v,
    u^*\rangle=(-1)^{|u^*||v|}\langle u^*,v\rangle,\;\;\forall  u^*\in V^*,v\in V.
\end{equation}
We  extend $\langle -,- \rangle$ to $(V \otimes V)^* \times (V \otimes V)$ by setting (applying $(V \otimes V)^* =V^*\otimes V^*$ since $V$ is finite dimensional)
$$\langle u_1^*\otimes u_2^*, v_1\otimes v_2\rangle:=(-1)^{|u_2^*||v_1|}\langle u_1^*,  v_1\rangle\langle u_2^*,  v_2\rangle, \quad \forall u_1^*, u_2^*\in V^*, v_1, v_2 \in V.$$

Let $(V,\rho)$ be a representation of a Lie
superalgebra $\G$. The even linear map $\rho^*:\G\longrightarrow
\mathfrak{gl}(V^*)$ defined by
\begin{equation}\label{eq:1.1}
    \langle\rho^*(x)u^*,v\rangle:=-(-1)^{|x||u^*|}\langle
    u^*,\rho(x)v\rangle,\;\;\forall x\in {\frak g}, u^*\in V^*, v\in V,
\end{equation}
gives  a representation $(V^*,\rho^*)$ of $\G$, called the {\bf dual representation} of $(V,\rho)$.
A representation $(V,\rho)$ of $\G$ is called {\bf self-dual} if it is isomorphic to  $(V^*,\rho^*)$.
The dual representation of the adjoint
representation  of a Lie superalgebra $\G$ is called the {\bf coadjoint representation} of $\G$.

\begin{exam}\label{exam:3.2}
Let $\G=\G_{\bar 0}\oplus\G_{\bar 1}$ be the $1|1$-dimensional Lie superalgebra with a homogeneous basis $\{e, f\}$, $e \in  \G_{\bar 0}$ and $f\in \G_{\bar 1}$, and non-zero  product $[e, f]=f$. 
Note that in particular, $[f,f]=0$.
Denote its dual vector superspace by $\G^*$ with the dual basis  $e^*\in \G^*_{\bar 0}, f^*\in \G^*_{\bar 1}$.
 We define an even linear map $T_0: \G^*\longrightarrow \G$ and an odd linear map $T_1: \G^*\longrightarrow \G$ by
    $$T_0(e^*)=0, \ T_0(f^*)=-f\;\;\textrm{ and }\;\; T_1(e^*)=f,\  T_1(f^*)=-e,$$ respectively. It is straightforward to check that $T_0$ (resp. $T_1$) is an even (resp. odd) $\mathcal{O}$-operator of $\G$ associated to the coadjoint representation.
\end{exam}

\begin{defn} (\cite{ABB,  Ch})
 Let $\G=\G_{\bar{0}}\oplus\G_{\bar{1}}$ be a Lie superalgebra and $x, y, z$ be homogeneous elements in $\G$.
 A bilinear form $\beta: \G\times \G \longrightarrow \mathbb F$  on $\G$ is said to be
\begin{enumerate}
\item \textbf{supersymmetric}  if $\beta(x,
y)=(-1)^{|x||y|}\beta(y, x)$;
\item \textbf{skew-supersymmetric}  if $\beta(x,
y)=-(-1)^{|x||y|}\beta(y, x)$;
\item \textbf{invariant}
if $\beta([x, y], z)=\beta(x, [y, z])$;
\item a \textbf{2-cocycle} on $\G$ if $\beta$ is
skew-supersymmetric and satisfies
\begin{equation}
\beta([x, y], z)=(-1)^{|y||z|}\beta([x, z], y)
+\beta(x,  [y, z]).
\end{equation}
\end{enumerate}
\end{defn}

As for Lie algebras, the adjoint representation $(\G, {\rm ad})$ of a Lie superalgebra $\G$ is self-dual  if and only if  there is an  even  non-degenerate invariant  bilinear form on $\G$~\cite{CW}. 
Then applying Lemma~\ref{prop3.2}, we obtain
\begin{coro} \label{coro:3.7}
Let $\G$ be a Lie superalgebra which admits an even non-degenerate invariant  bilinear form $\beta$ and let $\phi:\G\longrightarrow \G^*$ be an even linear map given by 
\begin{equation} \notag 
        \langle \phi(x), y\rangle:=\beta(x,y),\;\;\forall x, y \in \G.
    \end{equation}
    Then $T$ is in $\OO_{|T|}(\G;\G^*, {\rm ad}^*)$ if and only if the linear map
    $T\phi: \G\longrightarrow \G$ is a Rota-Baxter operator of weight 0 with degree $|T|$ on
    ${\frak g}$.
\end{coro}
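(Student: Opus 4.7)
The plan is to recognize $\phi$ as an isomorphism of Lie superalgebra representations from the adjoint representation $(\G, \mathrm{ad})$ to the coadjoint representation $(\G^*, \mathrm{ad}^*)$, and then to invoke Lemma~\ref{prop3.2} directly. The statement preceding the corollary already asserts that such an isomorphism exists under the hypothesis of an even non-degenerate invariant bilinear form on $\G$; the content of the corollary is to identify $\phi$ as this isomorphism and to record the consequence for $\OO$-operators.

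First, I would check that $\phi$ is an even linear isomorphism. The evenness of $\beta$ means $\beta(\G_{\bar 0}, \G_{\bar 1}) = \beta(\G_{\bar 1}, \G_{\bar 0}) = 0$, so if $x \in \G_\alpha$ is homogeneous, then $\langle \phi(x), y\rangle = \beta(x,y)$ vanishes for $y \in \G_{\alpha + \bar 1}$, placing $\phi(x)$ in $\G^*_\alpha$ by Eq.~\eqref{eq:2.3}. Non-degeneracy of $\beta$ yields injectivity of $\phi$, and a dimension count promotes this to a bijection.

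Second, I would verify the intertwining relation $\phi\, \mathrm{ad}(x) = \mathrm{ad}^*(x)\, \phi$ on homogeneous elements $x, y, z \in \G$. Pairing the left-hand side with $z$ gives $\beta([x,y], z)$. Pairing the right-hand side with $z$ and applying Eq.~\eqref{eq:1.1} together with $|\phi(y)| = |y|$ gives $-(-1)^{|x||y|}\beta(y, [x, z])$, which by invariance of $\beta$ equals $-(-1)^{|x||y|}\beta([y,x], z)$, and then by skew-supersymmetry of the bracket equals $\beta([x,y], z)$. This is essentially the only computation, and the main obstacle is simply bookkeeping the Koszul signs correctly.

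Finally, with $\phi$ established as an isomorphism between $(\G, \mathrm{ad})$ and $(\G^*, \mathrm{ad}^*)$, Lemma~\ref{prop3.2} applied to $\alpha = |T|$ yields the claimed equivalence: $T \in \OO_{|T|}(\G; \G^*, \mathrm{ad}^*)$ if and only if $T\phi \in \OO_{|T|}(\G; \G, \mathrm{ad})$, and the latter is precisely the condition of being a Rota-Baxter operator of weight $0$ of degree $|T|$ on $\G$ according to the definition given after Definition~\ref{defn:2.9}.
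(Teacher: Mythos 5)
Your proposal is correct and follows essentially the same route as the paper: the paper cites the fact that an even non-degenerate invariant bilinear form makes the adjoint representation self-dual and then invokes Lemma~\ref{prop3.2}, which is exactly your argument with the intertwining computation for $\phi$ written out explicitly. Your sign bookkeeping in verifying $\phi\,\mathrm{ad}(x)=\mathrm{ad}^*(x)\,\phi$ (invariance followed by skew-supersymmetry of the bracket) checks out.
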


\subsection{The interpretation of super $r$-matrices by $\mathcal O$-operators}
\label{ss:cybeo}
For a vector superspace $V$, under the natural linear isomorphism $V\otimes V\cong \Hom(V^*,V)$, every element $r=\sum_{i}x_{i}\otimes y_{i}\in V\otimes V$ corresponds to a unique linear map  $T_r:
V^*\longrightarrow V$ given by
\begin{equation}\label{eq:2.1}
\langle v^*, T_r(w^*)\rangle:=(-1)^{|r||w^*|}\langle v^*\otimes
w^*,r\rangle,\;\; \forall v^*,w^*\in V^*.
\end{equation}
We say that $r$ is {\bf even} (resp. {\bf odd}) if for all $i$, $x_i$ and $y_i$ have the same (resp. opposite) parity. Note that $r$ and $T_r$ have the same parity. The element $r$ is called \textbf{non-degenerate} if  $T_r$ is
invertible.

The \textbf{twist map} is an even linear map $\sigma: V\otimes V \longrightarrow V\otimes V$ defined by
$$\sigma(v \otimes w) :=(-1)^{|v||w|} w \otimes v, \quad \forall v, w \in
 V.$$
For  $r\in V\otimes V$,  $r$ is called \textbf{supersymmetric} (resp. {\bf skew-supersymmetric}) if $\sigma(r)=r$ (resp. $\sigma (r)=-r$), and is called
 {\bf \pansym} if it is either odd supersymmetric or even skew-supersymmetric.
Then a \pansym element $r$ is characterized by $\sigma
(r)=-(-1)^{|r|}r$.

We present some general facts on vector superspaces.
\begin{lemma}\label{lem2.6}
Let $V$ be a vector superspace and  $r=\sum_{k}x_{k}\otimes
y_{k}\in V\otimes V$ be a homogeneous element. Then for all
$v^* \in V^*$, we have $T_r(v^*)=(-1)^{|v^*|}\sum_k\langle
v^*,y_k\rangle x_k$. In particular, if $r$ is \pansym, then
$T_r(v^*)=-(-1)^{|r|+|r||v^*|}\sum_k\langle
v^*,x_k\rangle y_k$.
\end{lemma}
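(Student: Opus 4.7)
The plan is to start directly from the defining identity Eq.~\eqref{eq:2.1}, which characterizes $T_r$ via the canonical pairing, and unfold everything using only the extended pairing formula, the parity constraint from Eq.~\eqref{eq:2.3}, and the homogeneity $|x_k|+|y_k|=|r|$.

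First I would fix homogeneous $v^*,w^*\in V^*$ and compute
\[
\langle v^*, T_r(w^*)\rangle \;=\; (-1)^{|r||w^*|}\langle v^*\otimes w^*,\,r\rangle \;=\; (-1)^{|r||w^*|}\sum_{k}(-1)^{|w^*||x_k|}\langle v^*,x_k\rangle\langle w^*,y_k\rangle,
\]
using the extended pairing rule $\langle u_1^*\otimes u_2^*,v_1\otimes v_2\rangle=(-1)^{|u_2^*||v_1|}\langle u_1^*,v_1\rangle\langle u_2^*,v_2\rangle$. By Eq.~\eqref{eq:2.3}, each nonzero term forces $|w^*|=|y_k|$, so from $|x_k|+|y_k|=|r|$ we get $(-1)^{|r||w^*|+|w^*||x_k|}=(-1)^{|y_k||w^*|}=(-1)^{|w^*|}$ (the last equality using $|w^*|^2=|w^*|$). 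Pulling the scalar $\langle w^*,y_k\rangle$ out in front of $\langle v^*,x_k\rangle$ (scalars are even, so no sign is produced) gives
\[
\langle v^*,T_r(w^*)\rangle \;=\; (-1)^{|w^*|}\sum_{k}\langle w^*,y_k\rangle\langle v^*,x_k\rangle \;=\; \left\langle v^*,\,(-1)^{|w^*|}\sum_{k}\langle w^*,y_k\rangle\,x_k\right\rangle.
\]
Since this holds for every $v^*\in V^*$ and the canonical pairing is non-degenerate, the first formula follows.

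For the \pansym case I would use the characterization $\sigma(r)=-(-1)^{|r|}r$, which after re-indexing gives the alternative presentation
\[
r \;=\; -(-1)^{|r|}\sum_{k}(-1)^{|x_k||y_k|}\,y_k\otimes x_k.
\]
Applying the formula just proved to this presentation (swap the roles of $x_k$ and $y_k$, with the extra scalar $-(-1)^{|r|+|x_k||y_k|}$) yields
\[
T_r(w^*) \;=\; -(-1)^{|r|+|w^*|}\sum_{k}(-1)^{|x_k||y_k|}\langle w^*,x_k\rangle\,y_k.
\]
The remaining step is to absorb the sign $(-1)^{|w^*|+|x_k||y_k|}$ into $(-1)^{|r||w^*|}$; for each nonzero term one has $|w^*|=|x_k|$ and $|y_k|=|r|+|w^*|$, so a direct check shows $|w^*|+|x_k||y_k|\equiv|r||w^*|\pmod{2}$ (both sides reduce to $|w^*|(1+|w^*|)\equiv 0$ after subtraction), delivering the stated formula.

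The only real bookkeeping obstacle is keeping the Koszul signs straight at each step; all of this is handled by systematically using $|x_k|+|y_k|=|r|$ together with the fact that the pairings vanish unless the parities of the two slots agree, so the proof is a mechanical, sign-tracking verification rather than a conceptual argument.
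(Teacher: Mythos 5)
Your proof is correct and follows essentially the same route as the paper: unwind Eq.~\eqref{eq:2.1} via the extended pairing, use the evenness of the canonical pairing to force $|w^*|=|y_k|$ and collapse the Koszul signs with $|x_k|+|y_k|=|r|$, then conclude by non-degeneracy. For the \pansym case the paper substitutes $r=-(-1)^{|r|}\sigma(r)$ directly inside the pairing rather than applying the first formula to the re-indexed presentation of $r$, but this is only a cosmetic reorganization of the same sign computation, and your reconciliation $|w^*|+|x_k||y_k|\equiv|r||w^*|\pmod 2$ checks out.
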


\begin{proof}
Note that  $|r|=|x_k|+|y_k|$. For all $v^*, w^*\in V^*$, we have
\begin{equation*}
\langle w^*,T_r(v^*)\rangle=(-1)^{|r||v^*|}\langle w^*\otimes v^*,
r\rangle= \sum_{k}(-1)^{|y_k||v^*|}\langle
w^*,x_{k}\rangle\langle v^*,y_{k}\rangle=\Big\langle
w^*,(-1)^{|v^*|}\sum_k\langle v^*,y_k\rangle x_k\Big\rangle.
\end{equation*}
 We have $|y_k|=|v^*|$ and
$T_r(v^*)=(-1)^{|v^*|}\sum_k\langle v^*,y_k\rangle x_k$, since the canonical pairing is even and non-degenerate. For
the second statement,  we see that
\begin{eqnarray*}
\langle w^*,T_r(v^*)\rangle &=&(-1)^{|r||v^*|}\langle w^*\otimes v^*, r\rangle
=(-1)^{|r||v^*|}\langle w^*\otimes v^*, -(-1)^{|r|}\sigma (r)\rangle\\
&=&-(-1)^{|r|+|r||v^*|}\sum_{k}\langle w^*,y_{k}\rangle\langle v^*,x_{k}\rangle
=\Big\langle w^*,-(-1)^{|r|+|r||v^*|}\sum_k\langle v^*,x_k\rangle y_k\Big\rangle.
\end{eqnarray*}
Then we have $T_r(v^*)=-(-1)^{|r|+|r||v^*|}\sum_k\langle v^*,x_k\rangle y_k$.
\end{proof}

\begin{lemma}\label{lem2.7}
Let $V$ be a vector superspace and $r=\sum_{k}x_{k}\otimes y_{k}\in V\otimes V$  be a homogeneous element. Then $r$ is \pansym
if and only if $\langle w^*,T_r(v^*)\rangle=-(-1)^{|v^*||w^*|}\langle v^*, T_r(w^*)\rangle$ for all $w^*, v^*\in V^*$.
\end{lemma}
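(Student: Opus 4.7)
The plan is to translate the bilinear pairing identity in the statement into a tensor identity on $r$ via the isomorphism of Eq.~\eqref{eq:2.1}, and then conclude by non-degeneracy of the canonical pairing on $V^{*}\otimes V^{*}\times V\otimes V$. Because every step is a reversible equivalence, both implications of the lemma will fall out together.

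I would begin by rewriting both sides of the target identity as pairings with $r$. From Eq.~\eqref{eq:2.1},
\[
\langle w^{*}, T_r(v^{*})\rangle = (-1)^{|r||v^{*}|}\langle w^{*}\otimes v^{*}, r\rangle,\qquad \langle v^{*}, T_r(w^{*})\rangle = (-1)^{|r||w^{*}|}\langle v^{*}\otimes w^{*}, r\rangle,
\]
so the hypothesis is equivalent to
\[
(-1)^{|r||v^{*}|}\langle w^{*}\otimes v^{*}, r\rangle \;=\; -(-1)^{|v^{*}||w^{*}|+|r||w^{*}|}\langle v^{*}\otimes w^{*}, r\rangle
\]
for all homogeneous $v^{*}, w^{*}\in V^{*}$. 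By homogeneity of $r$, the only nontrivial case is $|v^{*}|+|w^{*}|=|r|$; both sides vanish otherwise.

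Next, I would establish the auxiliary twist identity
\[
\langle w^{*}\otimes v^{*}, r\rangle \;=\; (-1)^{|v^{*}||w^{*}|}\langle v^{*}\otimes w^{*}, \sigma(r)\rangle
\]
by expanding $r=\sum_{k}x_{k}\otimes y_{k}$ and $\sigma(r)=\sum_{k}(-1)^{|x_{k}||y_{k}|}y_{k}\otimes x_{k}$, applying the definition of the extended pairing to each summand, and observing that nonvanishing summands force $|x_{k}|=|w^{*}|$ and $|y_{k}|=|v^{*}|$; under these constraints the Koszul signs coming from the extended pairing and from $\sigma(r)$ collapse to the single factor $(-1)^{|v^{*}||w^{*}|}$. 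Substituting this identity into the reformulated hypothesis, and using $|v^{*}|+|w^{*}|=|r|$ together with the $\Z_{2}$-identity $|r|^{2}=|r|$ to rewrite $(-1)^{|r||v^{*}|}=(-1)^{|r||w^{*}|+|r|}$, the equation collapses to
\[
\langle v^{*}\otimes w^{*},\, \sigma(r)+(-1)^{|r|}r\rangle \;=\; 0,\qquad \forall\, v^{*}, w^{*}\in V^{*}.
\]
By non-degeneracy of the extended pairing on $V^{*}\otimes V^{*}\times V\otimes V$, this forces $\sigma(r)=-(-1)^{|r|}r$, which is precisely the defining condition of pan-supersymmetry. Reading the argument backwards yields the converse.

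The main obstacle is the careful bookkeeping of the Koszul signs in the twist identity and in the sign-collapse step. That said, the homogeneity constraint $|v^{*}|+|w^{*}|=|r|$ together with the $\Z_{2}$-identity $|r|^{2}=|r|$ makes the otherwise cumbersome signs cancel cleanly, so no subtle obstruction is expected beyond a routine but careful sign calculation.
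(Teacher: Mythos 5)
Your proposal is correct and follows essentially the same route as the paper: both arguments reduce the stated bilinear identity to the vanishing of $\langle v^*\otimes w^*,\sigma(r)+(-1)^{|r|}r\rangle$ for all $v^*,w^*\in V^*$ and conclude by non-degeneracy of the extended canonical pairing. The only difference is presentational — you run the computation from the bilinear identity toward the tensor identity using a self-contained twist identity, while the paper expands $\langle w^*\otimes v^*,\sigma(r)+(-1)^{|r|}r\rangle$ directly via Lemma~\ref{lem2.6} — and your sign bookkeeping (including the use of $|v^*|+|w^*|=|r|$ and $|r|^2=|r|$) checks out.
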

\begin{proof}
By the evenness of the canonical pairing and  Lemma \ref{lem2.6},  for all $w^*,v^*\in V^*$, we have
\begin{eqnarray*}
&&\langle w^*\otimes v^*, \sigma(r)+(-1)^{|r|}r\rangle\\
&=&\Big\langle w^*\otimes v^*, \sum_{k}(-1)^{|x_k||y_k|}y_{k}\otimes x_{k}\Big\rangle+(-1)^{|r|+|r||v^*|} \langle w^*, T_r(v^*)\rangle\\
&=&\Big\langle v^*, \sum_{k}\langle  w^*, y_k\rangle x_k\Big\rangle+(-1)^{|r|+|r||v^*|} \langle w^*, T_r(v^*)\rangle\\
&=&\langle v^*, (-1)^{ |w^*|}T_r(w^*)\rangle+(-1)^{|r|+|r||v^*|} \langle w^*, T_r(v^*)\rangle \\
&=&(-1)^{|r|+|r||v^*|}\big((-1)^{|v^*||w^*|}\langle v^*, T_r(w^*)\rangle+ \langle w^*,T_r(v^*)\rangle\big).
\end{eqnarray*}
Since the canonical pairing is
non-degenerate, we obtain the desired result.
\end{proof}

Let $\G$ be a Lie superalgebra and
$r=\sum_{i}x_{i}\otimes y_{i}\in \G\otimes \G$.
Then $r$ is called a {\bf \sr} in $\G$ if it is a solution of the \supcybe
\begin{equation}\label{eq:3.1}
    [[r,r]]:=[r_{12}, r_{13}]+[r_{12}, r_{23}]+[r_{13}, r_{23}]=0,
\end{equation}
where $\G$ is embedded in  the universal enveloping superalgebra $U(\G)$ and
\begin{equation} \label{eq:18}
    r_{12}:=\sum_{i}x_{i}\otimes y_{i}\otimes 1,
    r_{13}:=\sum_{i}x_{i}\otimes 1\otimes y_{i},
    r_{23}:=\sum_{i}1\otimes x_{i}\otimes y_{i}.
\end{equation}
As noted in the Introduction, the commutators
in Eq. \eqref{eq:3.1} are in the superspaces in the sense that
\begin{eqnarray}
    &&[r_{12},r_{13}]=\sum_{i,j}(-1)^{|y_i||x_j|}[x_{i},x_j]\otimes y_{i}\otimes y_j,\label{eq:1.3}\\
    &&[r_{12},r_{23}]=\sum_{i,j}x_{i}\otimes [y_{i},x_j]\otimes y_j,\label{eq:1.4}\\
   \label{eq:1.5} &&[r_{13},r_{23}]=\sum_{i,j}(-1)^{|y_i||x_j|}x_{i}\otimes x_j\otimes [y_{i},y_j].
\end{eqnarray}
We denote by $\Sol_{\bar 0}(\G)$ and $\Sol_{\bar 1}(\G)$ the sets of even and odd super $r$-matrices in $\G$ respectively.

We now establish a close relationship between homogeneous
pan-supersymmetric super $r$-matrices and  $\OO$-operators
associated to the coadjoint representations,
generalizing the work of Kupershmidt \cite{Ku3} for $r$-matrices
and \cite[Proposition~1]{WHB} for even super $r$-matrices.

\begin{theorem}\label{thm:1}
Let $\G$ be a Lie superalgebra and $r\in\G\otimes\G$ be \pansym. Then $r\in \Sol_{|r|}(\G)$ if and only if
$T_r\in\OO_{|r|}(\G;\G^*,\ad^*)$.
\end{theorem}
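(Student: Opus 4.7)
My plan is to prove both directions simultaneously by showing that, for all homogeneous $u^*, v^*, w^* \in \G^*$, the pairing $\langle u^* \otimes v^* \otimes w^*, [[r,r]]\rangle$ equals, up to a single fixed nonzero sign, the pairing $\langle u^*, E_r(v^*, w^*)\rangle$, where
\[
E_r(v^*, w^*) := [T_r(v^*), T_r(w^*)] - T_r\!\left((-1)^{(|r|+|v^*|)|r|}\ad^*(T_r(v^*))w^* - (-1)^{|v^*|(|r|+|w^*|)}\ad^*(T_r(w^*))v^*\right).
\]
Since $|T_r| = |r|$, the vanishing of $E_r(v^*, w^*)$ for all homogeneous $v^*, w^*$ is exactly the statement that $T_r \in \OO_{|r|}(\G; \G^*, \ad^*)$. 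Non-degeneracy of the canonical pairing (extended to triple tensor products via the natural Koszul convention) then converts the vanishing of $[[r,r]]$ into the vanishing of $E_r$.

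First I extend the pairing of \eqref{eq:2.7} to $\G^{\otimes 3}$ by the standard Koszul rule, and, using the explicit expressions \eqref{eq:1.3}--\eqref{eq:1.5}, expand each of the three pairings $\langle u^* \otimes v^* \otimes w^*, [r_{ab}, r_{cd}]\rangle$ as a triple sum over the indices of $r = \sum_i x_i \otimes y_i$. In each summand I move the interior bracket onto one of the dual legs via the defining identity \eqref{eq:1.1} of $\ad^*$, picking up a single Koszul sign. The slot left without a bracket then carries factors of the form $\langle \xi, y_k\rangle$ or $\langle \xi, x_k\rangle$, which by Lemma~\ref{lem2.6} are precisely the ingredients of $T_r(\xi)$: the first formula $T_r(\xi) = (-1)^{|\xi|}\sum\langle\xi, y_k\rangle x_k$ is used for slots paired to a $y$-leg of $r$, while the pan-supersymmetric alternative $T_r(\xi) = -(-1)^{|r|+|r||\xi|}\sum\langle\xi, x_k\rangle y_k$ is used for slots paired to an $x$-leg. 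Reassembling: $[r_{12}, r_{23}]$ yields $\langle u^*, [T_r(v^*), T_r(w^*)]\rangle$ (up to sign), while $[r_{12}, r_{13}]$ and $[r_{13}, r_{23}]$ yield the two $T_r\circ\ad^*$ contributions of $E_r$, the pan-supersymmetric formula being invoked precisely where the factor $u^*$ is paired against an $x$-leg.

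The main obstacle is sign bookkeeping: each of the three pairings accumulates Koszul signs from the triple pairing, from the dualization defining $\ad^*$, and from the choice of formula in Lemma~\ref{lem2.6}. To manage this, I fix homogeneity of $u^*, v^*, w^*$ so that only summands with matching parities $|x_i|, |y_i|$ survive; all accumulated signs then reduce to fixed functions of $|r|, |u^*|, |v^*|, |w^*|$ and can be verified block by block to agree with a single global sign. Once the three pairings are seen to sum to a nonzero scalar multiple of $\langle u^*, E_r(v^*, w^*)\rangle$, non-degeneracy of the canonical pairing delivers the equivalence $r\in\Sol_{|r|}(\G) \Longleftrightarrow T_r\in\OO_{|r|}(\G;\G^*,\ad^*)$.
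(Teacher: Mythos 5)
Your overall strategy is exactly the paper's: pair $[[r,r]]$ against $u^*\otimes v^*\otimes w^*$, show the result is a fixed sign times $\langle u^*,E_r(v^*,w^*)\rangle$ where $E_r$ is the defect of the $\OO$-operator identity, and conclude by non-degeneracy of the canonical pairing. However, your term-by-term reassembly is wrong, and it is the kind of error that would derail the computation rather than wash out in the signs. The internal bracket of $[r_{12},r_{13}]$ sits in the \emph{first} tensor slot (it is $\sum_{i,j}\pm[x_i,x_j]\otimes y_i\otimes y_j$), so it is the term whose pairing produces $\langle u^*,[T_r(v^*),T_r(w^*)]\rangle$: the second and third slots are pure $y$-legs, and Lemma~\ref{lem2.6} absorbs $\langle v^*,y_i\rangle$ and $\langle w^*,y_j\rangle$ directly into $T_r(v^*)$ and $T_r(w^*)$. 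By contrast, the bracket of $[r_{12},r_{23}]=\sum_{i,j}x_i\otimes[y_i,x_j]\otimes y_j$ is locked in the second slot and is therefore paired against $v^*$; no application of Lemma~\ref{lem2.6} or of Eq.~\eqref{eq:1.1} can make it come out as $\langle u^*,[T_r(v^*),T_r(w^*)]\rangle$ — what it naturally yields is $\langle v^*,[T_r(u^*),T_r(w^*)]\rangle$, which must then be matched with the term $T_r(\ad^*(T_r(w^*))v^*)$ of $E_r$. So the correct correspondence is: $[r_{12},r_{13}]\leftrightarrow$ the bracket term, $[r_{13},r_{23}]\leftrightarrow T_r(\ad^*(T_r(v^*))w^*)$, and $[r_{12},r_{23}]\leftrightarrow T_r(\ad^*(T_r(w^*))v^*)$, which is what the paper establishes.

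There is a second, related omission. To compare a quantity of the form $\langle v^*,[T_r(u^*),T_r(w^*)]\rangle$ (what $[r_{12},r_{23}]$ actually gives) with a quantity of the form $\langle u^*,T_r(\cdots)\rangle$ (a term of $E_r$ paired in the first slot), one needs the skew-adjointness of $T_r$, i.e.\ $\langle w^*,T_r(v^*)\rangle=-(-1)^{|v^*||w^*|}\langle v^*,T_r(w^*)\rangle$, which is Lemma~\ref{lem2.7} and is precisely the operator-level encoding of pan-supersymmetry that the paper invokes at this step. Your proposal uses pan-supersymmetry only through the second formula of Lemma~\ref{lem2.6} (absorbing $\langle\xi,x_k\rangle$ coefficients), and moreover locates its use ``where $u^*$ is paired against an $x$-leg'' — but producing $T_r(u^*)$ there is counterproductive, since $T_r(u^*)$ never appears in $\langle u^*,E_r(v^*,w^*)\rangle$; the place where the pan-supersymmetric formula is genuinely needed is the $v^*$-against-$x_j$ leg of $[r_{13},r_{23}]$ (or, equivalently, one invokes Lemma~\ref{lem2.7} as the paper does). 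With the attribution corrected and pan-supersymmetry applied to the right legs, your plan does go through and is essentially the paper's proof; as written, the reassembly step fails.
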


\begin{proof}
Let $a^*, b^*, c^*\in \G^*$ and  $r=\sum_{k}x_{k}\otimes y_{k}\in \G\otimes \G$ be homogeneous and \pansym.
By Lemma \ref{lem2.6}, we have
\begin{eqnarray*}
&&\lefteqn{\langle a^*\otimes b^*\otimes c^*, [r_{12},r_{13}]\rangle =\sum_{k, l}(-1)^{|y_k||x_l|}\langle a^*\otimes b^*\otimes c^*, [x_{k},x_l]\otimes y_{k}\otimes y_l\rangle}\\
&=& \sum_{k, l}(-1)^{|y_k||x_l|+(|x_k|+|x_l|)(|b^*|+|c^*|)+|y_k||c^*|} \langle a^*,
[x_{k},x_l]\rangle \langle b^*, y_{k}\rangle \langle c^*,
y_l\rangle\\
&=&\sum_{k,l}(-1)^{|b^*||r|} \langle a^*,
[(-1)^{|b^*|}\langle b^*, y_{k}\rangle x_{k}, (-1)^{|c^*|}\langle c^*,
y_l\rangle x_l]\rangle \\
&=& (-1)^{|b^*||r|} \langle a^*,
[T_r(b^*),  T_r(c^*)]\rangle,
\end{eqnarray*}
here $\langle b^*, y_{k}\rangle=\langle c^*,
y_l\rangle=0$ unless $|b^*|=|y_k|$ and $|c^*|=|y_l|$ since the canonical pairing is even. Similarly,  we have
\begin{eqnarray*}
&&\langle a^*\otimes b^*\otimes c^*, [r_{12},r_{23}]\rangle = -(-1)^{|r|+|a^*||r|+|a^*||b^*|} \langle b^*,
[T_r(a^*),  T_r(c^*)]\rangle,\\
&&\langle a^*\otimes b^*\otimes c^*, [r_{13},r_{23}]\rangle = (-1)^{|a^*||r|+|a^*||c^*|+|b^*||c^*|} \langle c^*,
[T_r(a^*),  T_r(b^*)]\rangle.
\end{eqnarray*}

On the other hand, it follows from Lemma~\ref{lem2.7} that
\begin{eqnarray*}
\langle a^*, -(-1)^{(|r|+|b^*|)|r|}T_r(\ad^*(T_r(b^*))c^*)\rangle
= (-1)^{|a^*||r|+|b^*||r|+|a^*||c^*|+|b^*||c^*|} \langle c^*,
[T_r(a^*),  T_r(b^*)]\rangle,
\end{eqnarray*}
\begin{eqnarray*}
\langle a^*, (-1)^{|b^*|(|r|+|c^*|)}T_r(\ad^*(T_r(c^*))b^*)\rangle
= -(-1)^{|a^*||r|+|b^*||r|+|a^*||b^*|+|r|} \langle b^*,
[T_r(a^*),  T_r(c^*)]\rangle.
\end{eqnarray*}
Then
\begin{eqnarray*}
&&\langle a^*, [T_r(b^*), T_r(c^*)] - (-1)^{(|r|+|b^*|)|r|}T_r(\ad^*(T_r(b^*))c^*)+
(-1)^{|b^*|(|r|+|c^*|)}T_r(\ad^*(T_r(c^*))b^*)\rangle\\
&=&\langle a^*, [T_r(b^*), T_r(c^*)]\rangle-(-1)^{|a^*||r|+|b^*||r|+|a^*||b^*|+|r|}
\langle b^*,
[T_r(a^*),  T_r(c^*)]\rangle \label{eq:3.4}\\
&&+(-1)^{|a^*||r|+|b^*||r|+|a^*||c^*|+|b^*||c^*|}  \langle c^*,
[T_r(a^*), T_r(b^*)]\rangle\\
 &=&(-1)^{|b^*||r|}\langle a^*\otimes b^*\otimes c^*, [[r,r]]\rangle.
\end{eqnarray*}
Hence $r$
is a homogeneous super $r$-matrix in
$\G$ if and only if  $T_r$  is  an  $\mathcal {O}$-operator with
the same parity as $r$ associated to $(\G^*, \ad^*)$.
 \end{proof}
\begin{exam}\label{exam:4.4}
Consider the Lie superalgebra $\G$ and
$\mathcal{O}$-operators $T_0$ and $T_1$ of $\G$ associated to the
coadjoint representation in Example ~\ref{exam:3.2}. Let $r_0$ and $r_1$ be the corresponding tensors from $T_0$ and $T_1$ according to Eq.~\eqref{eq:2.1} respectively. Then
$$r_{0}= f\otimes
f\;\;\textrm{ and }\;\; r_{1}= e\otimes f+f\otimes e.$$
Obviously, $r_0$ is even and $\sigma (r_0)=-r_0$, whereas $r_1$ is
odd and $\sigma (r_1)=r_1$.
Then by Theorem ~\ref{thm:1},  $r_0$ and
$r_1$ are super $r$-matrices in the Lie superalgebra $\G$.
\end{exam}

Let $V$ be a  vector superspace. A non-degenerate element $r\in V\otimes V$ defines a bilinear form $\beta_r$ on $V$ by
\begin{equation}\label{eq:nonde}
\beta_r(u,v):=\langle T^{-1}_r(u), v\rangle , \;\;\forall u,v \in
V.
\end{equation}

\begin{lemma}\label{lem:r-b}
Let $V$ be a  vector superspace and  $r\in V\otimes V$ be a homogeneous
 non-degenerate element. Then $r$ is \pansym
if and only if $\beta_r(u, v)=-(-1)^{|u||v|}\beta_r(v, u)$ for all $u,v\in
V$.
\end{lemma}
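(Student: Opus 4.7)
The plan is to reduce the statement to Lemma~\ref{lem2.7} by using the bijection $T_r:V^*\to V$ that is available because $r$ is non-degenerate. The identification $\beta_r(u,v)=\langle T_r^{-1}(u),v\rangle$ turns a statement about $\beta_r$ on $V\times V$ into a statement about the canonical pairing evaluated on $V^*\times V$, and Lemma~\ref{lem2.7} already characterizes pan-supersymmetry of $r$ in exactly these terms.

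I would first reduce to the nontrivial parity sector: since $T_r^{-1}$ has parity $|r|$ and the canonical pairing is even, $\beta_r(u,v)$ vanishes for homogeneous $u,v$ with $|u|+|v|\neq|r|$. Hence both sides of the claimed identity are automatically zero off the diagonal $|u|+|v|=|r|$, and it suffices to check the equivalence on pairs with $|u|+|v|=|r|$. Next, given such $u,v$, I would write $u=T_r(w^*)$ and $v=T_r(v^*)$ for the unique $w^*,v^*\in V^*$ (using invertibility of $T_r$), so that $\beta_r(u,v)=\langle w^*,T_r(v^*)\rangle$ and $\beta_r(v,u)=\langle v^*,T_r(w^*)\rangle$. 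In these variables the condition of the lemma becomes $\langle w^*,T_r(v^*)\rangle = -(-1)^{|u||v|}\langle v^*,T_r(w^*)\rangle$, which is to be compared with the criterion $\langle w^*,T_r(v^*)\rangle = -(-1)^{|v^*||w^*|}\langle v^*,T_r(w^*)\rangle$ furnished by Lemma~\ref{lem2.7}.

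The only real content is then the sign check $(-1)^{|u||v|}=(-1)^{|v^*||w^*|}$. Using $|w^*|=|u|+|r|$, $|v^*|=|v|+|r|$ and the restriction $|u|+|v|=|r|$, one computes
\begin{equation*}
|w^*||v^*|=(|u|+|r|)(|v|+|r|)=|u||v|+|r|(|u|+|v|)+|r|^{2}=|u||v|+2|r|^{2}\equiv|u||v|\pmod{2},
\end{equation*}
so the signs agree. Both implications then follow because the substitution $(w^*,v^*)\mapsto(T_r(w^*),T_r(v^*))$ is a bijection of homogeneous pairs of the appropriate parities. The main (minor) obstacle is this bookkeeping of parities: keeping straight that $\beta_r$ is concentrated in the parity sector $|u|+|v|=|r|$ and using that constraint to collapse the $(-1)^{|v^*||w^*|}$ from Lemma~\ref{lem2.7} into $(-1)^{|u||v|}$; once that is handled, the proof is a one-line application of Lemma~\ref{lem2.7}.
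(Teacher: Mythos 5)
Your proof is correct and follows essentially the same route as the paper's: both invert $T_r$ to rewrite $\beta_r$ in terms of the canonical pairing and then invoke Lemma~\ref{lem2.7}, with the sign identity $(-1)^{|v^*||w^*|}=(-1)^{|u||v|}$ handled via the parity shifts $|u|=|w^*|+|r|$, $|v|=|v^*|+|r|$. Your explicit reduction to the sector $|u|+|v|=|r|$ (where alone the forms can be nonzero) is a slightly more careful justification of that sign check than the paper spells out, but the argument is the same.
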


\begin{proof}
Since $r$ is non-degenerate, for all $u, v\in V$, there exist
$a^*, b^*\in V^*$ such that $u=T_r(a^*), v=T_r(b^*),$ where $|u|=|a^*|+|r|, |v|=|b^*|+|r|$. If $\sigma(r)=-(-1)^{|r|} r$,  then by Lemma \ref{lem2.7}  we have
\begin{eqnarray*}
\beta_r(u,v)&=&\langle T_r^{-1}(u), v\rangle=\langle a^*, T_r(b^*)\rangle=-(-1)^{|r|+|r||a^*|}\langle T_r(a^*), b^*\rangle\nonumber\\
&=&-(-1)^{(|a^*|+|r|)(|b^*|+|r|)}\langle T_r^{-1}(v),
u\rangle=-(-1)^{|u||v|}\beta_r(v,u).\label{eq:3.8}
\end{eqnarray*}
It is straightforward to see that the converse also holds.
\end{proof}

Now we give a unified interpretation of both  even and odd
non-degenerate super $r$-matrices,
generalizing the well-known work of Drinfeld~\cite{Dr} and the even case in \cite[Corollary 2]{WHB}.

\begin{theorem}\label{prop:3.9}
Let $\G$ be a Lie superalgebra and $r\in\G\otimes\G$
be homogeneous, non-degenerate and \pansym.
Then $r\in\Sol_{|r|}(\G)$ if and only if $\beta_r$ is a 2-cocycle
on $\G$.
\end{theorem}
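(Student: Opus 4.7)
The plan is to combine Theorem~\ref{thm:1}, Lemma~\ref{lem:r-b}, and the non-degeneracy of the canonical pairing to translate between the $\OO$-operator identity and the 2-cocycle identity. By Theorem~\ref{thm:1}, the hypothesis $r \in \Sol_{|r|}(\G)$ is equivalent to $T_r \in \OO_{|r|}(\G;\G^*, \ad^*)$, so it suffices to show that, under the standing assumptions, $T_r$ is an $\OO$-operator associated to the coadjoint representation if and only if $\beta_r$ is a 2-cocycle. The skew-supersymmetry required for $\beta_r$ to be a cocycle is precisely the content of Lemma~\ref{lem:r-b}, so only the cocycle identity itself remains.

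For the main step, I would exploit the invertibility of $T_r$ to present arbitrary homogeneous $x,y,z\in \G$ as $x = T_r(a^*)$, $y = T_r(b^*)$, $z = T_r(c^*)$ for unique homogeneous $a^*, b^*, c^* \in \G^*$ with $|a^*| = |x|+|r|$, and similarly for $b^*, c^*$. I would then pair the $\OO$-operator equation applied to $(b^*, c^*)$ with $a^*$ under the canonical pairing. Rewriting each resulting term by (i) the definition \eqref{eq:1.1} of $\ad^*$ to move the bracket into the second slot of $\langle -, -\rangle$, (ii) the identity $\beta_r(T_r(u^*), v) = \langle u^*, v\rangle$ which follows directly from $\beta_r(u,v) = \langle T_r^{-1}(u), v\rangle$, and (iii) the skew-supersymmetry of $\beta_r$ to permute arguments into the canonical cocycle order, should reproduce the three terms of the 2-cocycle defect
\[
    \beta_r([x,y],z) - (-1)^{|y||z|}\beta_r([x,z],y) - \beta_r(x, [y,z])
\]
up to a single non-zero overall scalar, after which non-degeneracy of the pairing converts the resulting identity into the 2-cocycle identity itself.

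The hard part will be the sign bookkeeping. The $\OO$-operator signs $(-1)^{(|r|+|b^*|)|r|}$ and $(-1)^{|b^*|(|r|+|c^*|)}$ mix $|r|$ with the $\G^*$-parities, whereas the cocycle sign $(-1)^{|y||z|}$ lives in the $\G$-parities; reconciling the two via $|x|=|a^*|+|r|$ and its analogues, together with the sign flips produced by skew-supersymmetry and by the triple-pairing computation already carried out in the proof of Theorem~\ref{thm:1}, requires a careful Koszul-rule check that the net sign collapses correctly for both even and odd $r$. Once that sign identity is verified, both directions of the equivalence follow simultaneously, since every transformation used—non-degeneracy of the canonical pairing, invertibility of $T_r$, and the equivalences supplied by Lemmas~\ref{lem2.6}, \ref{lem2.7}, and \ref{lem:r-b}—is invertible.
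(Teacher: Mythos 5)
Your proposal is correct and follows essentially the same route as the paper: reduce to $T_r$ being an $\OO$-operator via Theorem~\ref{thm:1}, get skew-supersymmetry of $\beta_r$ from Lemma~\ref{lem:r-b}, then use invertibility of $T_r$ to write $x,y,z$ as images of $a^*,b^*,c^*$ and translate the $\OO$-operator identity into the 2-cocycle identity through the canonical pairing and the definition of $\ad^*$. The paper carries out the sign bookkeeping you defer (computing $\beta_r([x,y],z)=\langle T_r^{-1}([T_r(a^*),T_r(b^*)]),z\rangle$ and substituting Eq.~\eqref{eq:oop}), and its converse direction is justified exactly by the reversibility you invoke.
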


\begin{proof}
It follows from Lemma~\ref{lem:r-b} that $\beta_r$ is  skew-supersymmetric  when $r$ is \pansym. For all $x, y, z\in \G$, there
exist $a^*, b^*,c^*\in \G^*$ such that $x=T_r(a^*), y=T_r(b^*),
z=T_r(c^*),$ with $|x|=|a^*|+|r|, |y|=|b^*|+|r|,
|z|=|c^*|+|r|$. By Theorem~ \ref{thm:1}, $r\in\Sol_{|r|}(\G)$ leads to $T_r\in\OO_{|r|}(\G, \G^*,\ad^*)$ and we have
\begin{eqnarray*}
\beta_r([x, y], z) 
&=&\langle T_r^{-1}([T_r(a^*), T_r(b^*)]), z\rangle\nonumber\\
&=&\langle T_r^{-1}(T_r((-1)^{(|a^*|+|r|)|r|}\ad^*(T_r(a^*))b^*-
(-1)^{|a^*|(|b^*|+|r|)}\ad^*(T_r(b^*))a^*)), z\rangle\nonumber\\
&=& -(-1)^{(|a^*|+|r|)(|b^*|+|r|)} \langle b^*, \ad(T_r(a^*))z\rangle
+\langle a^*,  \ad(T_r(b^*)) z\rangle\nonumber\\
&=& -(-1)^{|x||y|}\langle T_r^{-1}(y), [x,z]\rangle
+\langle T_r^{-1}(x),  [y, z]\rangle \label{eq:3.9}\\
&=&-(-1)^{|x||y|}\beta_r(y, [x,z])+\beta_r(x,  [y, z])\nonumber\\
&=&(-1)^{|y||z|}\beta_r([x,z], y)+\beta_r(x,  [y, z])\nonumber.
\end{eqnarray*}
Hence $\beta_r$ is a  2-cocycle on $\G$. Conversely, if $\beta_r$
is a 2-cocycle on $\G$, then  $T_r$ is an    $\mathcal {O}$-operator of $\G$ associated to
$(\G^*,\ad^*)$ and thus  $r$ is a  super $r$-matrix  by Theorem~ \ref{thm:1}.
\end{proof}

\subsection{Super $r$-matrices from $\mathcal O$-operators}
We give the notion of the semi-direct product of a Lie
superalgebra and its module.

\begin{defn}
    Let $(V, \rho)$ be a representation of a Lie superalgebra $\G$. Define a Lie superalgebra structure on
    the direct sum ${\frak g}\oplus V$ of the underlying
    vector superspaces of ${\frak g}$ and $V$  by
\begin{equation}
    [(x,u),(y,v)]\coloneqq ([x,y], \rho(x)v-(-1)^{|u||y|}\rho(y)u),\;\;\forall x,y \in {\frak g}, u, v\in
        V.\end{equation}
  This  Lie superalgebra is called the
    \textbf{semi-direct product} of ${\frak g}$ and $V$, and is denoted by
    ${\frak g}\ltimes_\rho V$.
\end{defn}

We  evidently have
\begin{lemma}\label{prop:2.4}
    If $(V_1, \rho_1)$ and $(V_2, \rho_2)$ are isomorphic representations of a Lie superalgebra $\G$, then  their corresponding semi-direct product Lie  superalgebras
    ${\frak g}\ltimes_{\rho_1} V_1$ and  ${\frak g}\ltimes_{\rho_2} V_2$ are isomorphic.
\end{lemma}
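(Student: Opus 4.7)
The plan is to construct the isomorphism explicitly from the given $\G$-module isomorphism $\phi: V_1\to V_2$. Define an even linear map
\[
    \Phi: \G\ltimes_{\rho_1} V_1\longrightarrow \G\ltimes_{\rho_2} V_2,\qquad \Phi(x,u) := (x,\phi(u)),
\]
for $x\in \G$ and $u\in V_1$, and observe immediately that $\Phi$ is an even linear isomorphism because $\phi$ is. So the only real content is verifying that $\Phi$ respects the Lie superbracket.

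For homogeneous $x,y\in\G$ and $u,v\in V_1$, I would expand both sides of the desired identity
\[
    \Phi\bigl([(x,u),(y,v)]\bigr) = [\Phi(x,u),\Phi(y,v)]
\]
using the definition of the semi-direct product bracket. On the left we get $\bigl([x,y],\,\phi(\rho_1(x)v)-(-1)^{|u||y|}\phi(\rho_1(y)u)\bigr)$, and the intertwining relation $\phi\rho_1(x)=\rho_2(x)\phi$ from Eq.~\eqref{eq:equivalence} converts this to $\bigl([x,y],\,\rho_2(x)\phi(v)-(-1)^{|u||y|}\rho_2(y)\phi(u)\bigr)$. On the right, the semi-direct product bracket in $\G\ltimes_{\rho_2}V_2$ produces $\bigl([x,y],\,\rho_2(x)\phi(v)-(-1)^{|\phi(u)||y|}\rho_2(y)\phi(u)\bigr)$, and since $\phi$ is even we have $|\phi(u)|=|u|$, so the Koszul signs match exactly.

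There is essentially no obstacle here; the identification is forced by the equivariance and the evenness of $\phi$, which is why the paper labels the lemma as ``evident.'' The only bookkeeping item to keep track of is precisely this parity condition on $\phi$: if $\phi$ were allowed to be odd, the sign factor $(-1)^{|u||y|}$ would no longer transport correctly across $\phi$, so the hypothesis that an isomorphism of representations is an \emph{even} map (built into Eq.~\eqref{eq:equivalence}) is exactly what makes the argument go through.
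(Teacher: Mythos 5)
Your proof is correct and is exactly the argument the paper has in mind: the paper states the lemma with no proof (prefacing it with ``We evidently have''), and the intended justification is precisely the map $(x,u)\mapsto (x,\phi(u))$ together with the intertwining relation and the evenness of $\phi$ to match the Koszul sign. Your remark that evenness of $\phi$ is what makes the sign $(-1)^{|u||y|}$ transport correctly is the right observation and is the only nontrivial point.
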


Let $\{v_i\}$ be a homogeneous basis of a vector superspace $V$   and $\{v_i^*\}$ be the dual basis of $V^*$.
Since $\mathrm{Hom}(V,\G)\cong\G\otimes V^*$ which is embedded into $(\G\oplus V^*)\otimes (\G\oplus V^*)$,  we can identify an element $T\in\Hom(V, \G)$ with the $2$-tensor
\begin{equation}\label{eq3.1}
\mT:=\sum_{i} T(v_i)\otimes v_i^*\in (\G\oplus V^*)\otimes (\G\oplus V^*).
\end{equation}
Then $|\mT|=|T|$ and we  define
\begin{equation}\label{eq3.2}
r \coloneqq r_T\coloneqq \mT-(-1)^{|\mT|}\sigma(\mT)=\sum_{i} (T(v_i)\otimes v_i^*+(-1)^{(|T|+\bar 1)(|v_i|+\bar 1)} v_i^*\otimes T(v_i)).
\end{equation}
Obviously, $r$ is \pansym.

The following conclusion shows that any $\OO$-operator gives a super $r$-matrix. 
Note that in the case that $T$ is even (hence $r$ is even skew-supersymmetric) is already obtained in \cite[Proposition~2]{WHB}.

\begin{theorem}\label{thm:2} Let $(V, \rho)$ be a representation of a Lie superalgebra
$\G$ and $T: V\rightarrow \G$ be a homogeneous linear map.  Then the element
$r:=\mT-(-1)^{|\mT|}\sigma(\mT)$ is a \sr in the
semi-direct product Lie superalgebra $\G \ltimes_{\rho^*}V^*$  if
and only if $T\in \OO_{|T|}(\G;V,\rho)$.
\end{theorem}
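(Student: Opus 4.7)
Since $r = \mT - (-1)^{|\mT|}\sigma(\mT)$ is pan-supersymmetric by construction, my approach is to apply Theorem~\ref{thm:1} to the ambient Lie superalgebra $\widehat{\G} := \G \ltimes_{\rho^*} V^*$. This reduces the claim to showing that $T_r \in \OO_{|T|}(\widehat{\G}; \widehat{\G}^*, \ad^*)$ if and only if $T \in \OO_{|T|}(\G; V, \rho)$. The rest of the proof is then a calculation that decomposes along the direct sum decomposition of $\widehat{\G}^*$.

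\textbf{Step 1: computing $T_r$.} I would identify $\widehat{\G}^* = \G^* \oplus V^{**} \cong \G^* \oplus V$ using Eq.~\eqref{eq:2.7}, and describe $\ad^*_{\widehat{\G}}$ explicitly on each summand in terms of $\ad^*_\G$ and $\rho$ (noting that $V^*$ is an abelian ideal). Since $r$ only involves tensors of the form $T(v_i)\otimes v_i^*$ and their twists, a direct application of Lemma~\ref{lem2.6} shows that $T_r$ is block-off-diagonal: it sends $V \subset \widehat{\G}^*$ into $\G \subset \widehat{\G}$ as $T$ (up to a unit sign), and sends $\G^* \subset \widehat{\G}^*$ into $V^* \subset \widehat{\G}$ as $\pm T^{\tp}$, where $T^{\tp}$ is the transpose of $T$.

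\textbf{Step 2: case analysis.} Plugging this description of $T_r$ into the $\OO$-operator equation~\eqref{eq:oop} for $\widehat{\G}$ and splitting according to whether each of the two arguments lies in $V$ or in $\G^*$, I expect: (i) the $(V,V)$ case, paired against an arbitrary element of $V^*$, matches exactly the defining equation~\eqref{eq:oop} for $T$ associated to $(V,\rho)$; (ii) the mixed cases $(V,\G^*)$ and $(\G^*,V)$ reduce to (i) via pan-supersymmetry (Lemma~\ref{lem2.7}) together with the defining relation~\eqref{eq:1.1} between $\rho$ and $\rho^*$; (iii) the $(\G^*,\G^*)$ case is automatic because $T_r(\G^*) \subset V^*$ and $[V^*,V^*]=0$ in $\widehat{\G}$. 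One could alternatively avoid Theorem~\ref{thm:1} by expanding $[[r,r]]$ directly via Eqs.~\eqref{eq:1.3}--\eqref{eq:1.5} and collecting components in $(\G \oplus V^*)^{\otimes 3}$, but the reduction above seems more economical.

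\textbf{Main obstacle.} The principal difficulty is the Koszul sign bookkeeping. The odd twist in Eq.~\eqref{eq:2.7} identifying $V \cong V^{**}$, the sign in the pairing on $V\otimes V$, the sign in $\ad^*$, and the signs in the $\OO$-operator equation~\eqref{eq:oop} must all be reconciled. In particular, the specific coefficient $-(-1)^{|\mT|}$ in front of $\sigma(\mT)$ is essential: it is precisely the factor that makes the two symmetric contributions combine coherently, rather than cancel or double, and that forces the four cases in Step 2 to collapse to a single equation in $T$. As a consistency check, in the special case $|T| = \bar 0$ the statement should specialize to \cite[Proposition~2]{WHB}, while in the case $|T| = \bar 1$ it should supply the odd analogue that is new to this paper.
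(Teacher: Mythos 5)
Your route is sound but genuinely different from the paper's. The paper proves Theorem~\ref{thm:2} by brute force: it expands $[r_{12},r_{13}]+[r_{12},r_{23}]+[r_{13},r_{23}]$ in coordinates inside $(\G\oplus V^*)^{\otimes 3}$, using the basis $\{v_i\}$ and the explicit action of $\rho^*$, and shows that the whole sum collects into $\sum_{i,j}\pm\bigl(Op\otimes v_i^*\otimes v_j^* \pm v_i^*\otimes Op\otimes v_j^* \pm v_i^*\otimes v_j^*\otimes Op\bigr)$, where $Op$ is exactly the defect of the $\OO$-operator identity \eqref{eq:oop}; the equivalence then falls out in both directions at once, with no appeal to Theorem~\ref{thm:1}. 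You instead apply Theorem~\ref{thm:1} to $\widehat{\G}=\G\ltimes_{\rho^*}V^*$ and verify the $\OO$-operator identity for $T_r$ blockwise on $\widehat{\G}^*\cong \G^*\oplus V$. This is viable: your Step~1 is confirmed by the paper itself, which later computes $T_{r_T}$ explicitly in Eq.~\eqref{eq:28} (it is block off-diagonal, acting as $T$ on $V$ and as $-(-1)^{|T|}T^*$ on $\G^*$), and your observation that only the $(V,V)$ block carries the content matches the paper's finding that $[[r,r]]$ has exactly one $\G$-tensor-factor in every surviving term. What each approach buys: yours is more conceptual and makes Corollary~\ref{cor:equv} (the equivalence of (a) and (c) there) the primitive statement, with the $r$-matrix statement derived; the paper's is self-contained and exhibits the defect $Op$ as a literal tensor factor of $[[r,r]]$, which is what makes the ``only if'' direction immediate. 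Two spots in your plan need tightening. First, in case (iii) it is not enough that $[V^*,V^*]=0$; you must also note that $\ad^*(u^*)$ annihilates $\G^*\subset\widehat{\G}^*$ for $u^*\in V^*$ (since $[u^*,-]$ lands in $V^*$, on which $\G^*$ vanishes), so that the right-hand side of \eqref{eq:oop} vanishes as well. Second, the claim that the mixed cases ``reduce to (i) via pan-supersymmetry'' is the real work: Lemma~\ref{lem2.7} only gives the symmetry of $T_r$, whereas what you need is the permutation symmetry of the trilinear defect form $\langle a^*, [T_r(b^*),T_r(c^*)]-\cdots\rangle$; that symmetry is available (it is the penultimate display in the proof of Theorem~\ref{thm:1}), but you must cite it or redo the adjointness computation with \eqref{eq:1.1} and \eqref{eq:2.7} explicitly, which is comparable in length to the paper's direct calculation.
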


\begin{proof}
To obtain an equivalent condition for $r$  being a \sr in $\G\ltimes_{\rho^*}V^*$, we first note that
\begin{eqnarray*}
&&\sum_{i,j} (-1)^{(|v_i|+|T|)|v_j|+|T|+|v_j|}\rho^*(T(v_i))v_j^*\otimes v_i^*\otimes T(v_j)\\
&=& \sum_{i,j} \Big(-(-1)^{|T|+|v_j|}\sum_{k} \langle v_j^*, \rho(T(v_i))v_k\rangle v_k^*\Big) \otimes v_i^*\otimes T(v_j)\\
&=&\sum_{i,k} -(-1)^{|v_i|+|v_k|}v_k^* \otimes v_i^*\otimes
T\Big(\sum_{j} \langle v_j^*, \rho(T(v_i))v_k\rangle v_j\Big)\\
&=&\sum_{i,j} -(-1)^{|v_i|+|v_j|}v_j^*\otimes v_i^* \otimes
T(\rho(T(v_i))v_j).
\end{eqnarray*}
Here the second equality follows from taking $v_j$
such that $|v_j|=|T|+|v_i|+|v_k|$, since  for all the other choices $\langle v_j^*,
\rho(T(v_i))v_k\rangle=0$. 
Similarly, we have
$$\sum_{i,j}(-1)^{|T|(|v_i|+|v_j|)+|v_i|}\rho^*(T(v_j))v_i^*\otimes T(v_i)\otimes v_j^*=\sum_{i,j}-(-1)^{|v_i||v_j|+|v_i|+|T|} v_i^*\otimes T(\rho(T(v_j))v_i)\otimes v_j^*.$$
Then we have
\begin{eqnarray*}
[r_{12}, r_{13}]
&=&\sum_{i,j}\Big((-1)^{(|v_j|+|T|)|v_i|}[T(v_i), T(v_j)]\otimes v_i^*\otimes v_j^*+(-1)^{|T|(|v_i|+|v_j|)+|v_i|}\rho^*(T(v_j))v_i^*\otimes T(v_i)\otimes v_j^*\\
&&\quad \;\;\;\;-(-1)^{(|v_i|+|T|)|v_j|+|T|+|v_j|}\rho^*(T(v_i))v_j^*\otimes v_i^*\otimes T(v_j)\Big)\\
&=&\sum_{i,j}\Big((-1)^{(|v_j|+|T|)|v_i|}[T(v_i), T(v_j)]\otimes v_i^*\otimes v_j^*
-(-1)^{|v_i||v_j|+|v_i|+|T|} v_i^*\otimes T(\rho(T(v_j))v_i)\otimes v_j^*\\
&&\quad \;\;\;\;
+(-1)^{|v_i|+|v_j|} v_j^*\otimes v_i^*\otimes T(\rho(T(v_i))v_j)\Big).
\end{eqnarray*}

Also apply the same computations to $[r_{12}, r_{23}]$ and $[r_{13}, r_{23}]$. Set
$$Op:=[T(v_i), T(v_j)] -(-1)^{(|T|+|v_i|)|T|}T(\rho(T(v_i))v_j)+ (-1)^{|v_i|(|T|+|v_j|)}T(\rho(T(v_j))v_i)$$
to be from the $\OO$-operator identity in Eq.~\eqref{eq:oop}. Then we  derive
\begin{eqnarray*}
&&[r_{12}, r_{13}]+[r_{12}, r_{23}]+[r_{13}, r_{23}]\\
&=&\sum_{i,j}\Big((-1)^{(|v_j|+|T|)|v_i|}[T(v_i), T(v_j)]\otimes v_i^*\otimes v_j^*
+(-1)^{|v_i|+|v_j|} v_i^*\otimes v_j^*\otimes T(\rho(T(v_j))v_i)\\
&&\quad\;\;\;\;\;-(-1)^{|v_i||v_j|+|v_i|+|T|} v_i^*\otimes T(\rho(T(v_j))v_i)\otimes v_j^*\\
&&\quad\;\;\;\;\;-(-1)^{|T|+|v_i|+|v_j|+|v_i||v_j|}v_i^*\otimes v_j^*\otimes T(\rho(T(v_i))v_j)
+T(\rho(T(v_j))v_i)\otimes v_i^*\otimes  v_j^*\\
&&\quad\;\;\;\;\;+(-1)^{|v_i|}v_i^*\otimes T(\rho(T(v_i))v_j) \otimes v_j^*+(-1)^{(|v_j|+|T|)|v_i|+|v_i|+|v_j|} v_i^*\otimes v_j^* \otimes[T(v_i), T(v_j)] \\
&&\quad\;\;\;\;\;-(-1)^{|T|+|v_i||v_j|}T(\rho(T(v_i))v_j)\otimes v_i^*\otimes v_j^*-(-1)^{|T||v_i|+|T|+|v_i|}v_i^*\otimes[T(v_i), T(v_j)]\otimes  v_j^*\Big)\\
&=&\sum_{i,j} (-1)^{(|v_j|+|T|)|v_i|}\Big(
Op\otimes v_i^*\otimes v_j^*
 -(-1)^{|T|+|v_i|+|v_j||v_i|}v_i^*\otimes Op\otimes v_j^* +(-1)^{|v_i|+|v_j|}v_i^*\otimes v_j^*\otimes Op\Big).
\end{eqnarray*}
Therefore, $r$ is a \sr if and only if $Op=0$, that is, $T$
is an  $\mathcal {O}$-operator of $\G$ associated to  $(V, \rho)$.
\end{proof}

Combining Theorems~ \ref{thm:2} and \ref{thm:1}, we obtain

\begin{coro}\label{cor:equv}
Let $(V,\rho)$ be a representation of a Lie superalgebra $\G$ and
$T:V\longrightarrow \G$ be  a homogeneous linear map. Then the
following statements are equivalent:
\begin{enumerate}
\item $T$ is in $\OO_{|T|}(\G;V, \rho)$; 
\item $r_T$ given by $T$ via Eq.~\eqref{eq3.2} is in $\Sol_{|T|}(\G
\ltimes_{\rho^*}V^*)$; 
\item $T_{r_T}$ given by
$r_T$  via Eq.~ \eqref{eq:2.1} is in $\OO_{|T|}(\G
\ltimes_{\rho^*}V^*; (\G \ltimes_{\rho^*}V^*)^*,
\ad^*)$.
\end{enumerate}
\end{coro}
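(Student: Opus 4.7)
The plan is to deduce the three-fold equivalence by chaining Theorem~\ref{thm:2} (which handles $(1)\Leftrightarrow(2)$) with Theorem~\ref{thm:1} applied to the enlarged Lie superalgebra $\G \ltimes_{\rho^*}V^*$ (which will handle $(2)\Leftrightarrow(3)$). No new computation is needed; the task is to check that the hypotheses of both theorems are met for the element $r_T$ and the map $T_{r_T}$ constructed from $T$.

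First, I would observe that $(1)\Leftrightarrow(2)$ is an immediate restatement of Theorem~\ref{thm:2}. Indeed, the element $r_T$ defined by Eq.~\eqref{eq3.2} is exactly the tensor $\mT-(-1)^{|\mT|}\sigma(\mT)$ whose status as a super $r$-matrix in $\G \ltimes_{\rho^*}V^*$ is characterized there by $T$ being a homogeneous $\OO$-operator of parity $|T|$. Nothing more is needed here beyond matching notation.

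Next, for $(2)\Leftrightarrow(3)$, I would apply Theorem~\ref{thm:1} to the Lie superalgebra $\widetilde{\G}:=\G \ltimes_{\rho^*}V^*$ with $r=r_T \in \widetilde{\G}\otimes \widetilde{\G}$. The hypothesis of Theorem~\ref{thm:1} requires $r_T$ to be pan-supersymmetric and homogeneous; both are built into the definition (\ref{eq3.2}): writing $r_T=\mT-(-1)^{|\mT|}\sigma(\mT)$ and applying $\sigma$, one gets $\sigma(r_T)=\sigma(\mT)-(-1)^{|\mT|}\mT = -(-1)^{|\mT|}r_T$, which is the characterization of pan-supersymmetry noted just after Eq.~\eqref{eq3.2}, and clearly $|r_T|=|T|$. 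Theorem~\ref{thm:1} then yields that $r_T \in \Sol_{|T|}(\widetilde{\G})$ if and only if $T_{r_T} \in \OO_{|T|}(\widetilde{\G}; \widetilde{\G}^*, \ad^*)$, which is precisely statement (3).

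Combining the two equivalences gives the chain $(1)\Leftrightarrow(2)\Leftrightarrow(3)$ and completes the proof. The only point requiring a moment's verification is the pan-supersymmetry of $r_T$ (so that Theorem~\ref{thm:1} applies), but this is built into the symmetrization in Eq.~\eqref{eq3.2}; I do not anticipate any real obstacle, as the corollary is essentially a packaging result that records the compatibility between the two directions of passage between $\OO$-operators and super $r$-matrices established earlier in the section.
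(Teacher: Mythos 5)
Your proposal is correct and follows essentially the same route as the paper, which derives the corollary directly by combining Theorem~\ref{thm:2} (for the equivalence of (a) and (b)) with Theorem~\ref{thm:1} applied to $\G\ltimes_{\rho^*}V^*$ (for the equivalence of (b) and (c)), the pan-supersymmetry of $r_T$ being noted immediately after Eq.~\eqref{eq3.2}. Your explicit check that $\sigma(r_T)=-(-1)^{|\mT|}r_T$ is a nice touch but matches what the paper takes as obvious.
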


\section{Parity dualities of $\OO$-operators and \srs}
\label{sec:oop}
\label{sec:parity}

Using the notion of the \parityrev of a Lie
superalgebra representation, we obtain a natural one-one correspondence between  $\OO$-operators of any given parity associated to a
representation and $\OO$-operators of the opposite parity
associated to the \parityrev representation. This correspondence
can be regarded as a duality between  even and odd $\OO$-operators
and  we apply it to establish a duality between even and odd \srs
in semi-direct product Lie superalgebras constructed from
$\OO$-operators. Moreover, this process is iterated to generate a
tree family of super $r$-matrices from any given homogeneous
pan-supersymmetric super $r$-matrix or
$\OO$-operator. Finally, this approach is carried out when the representation is self-reversing.

\subsection{Parity reverse representations and parity dualities}\label{ss:dual}

For a vector superspace $V=V_{\bar 0}\oplus V_{\bar 1}$, we denote by $sV$  the vector superspace obtained by interchanging the even and  odd parts of  $V$, that is, $(sV)_{\bar 0}=V_{\bar 1}$ and $(sV)_{\bar 1}=V_{\bar 0}$.
The {\bf \parityrev map} (or the \textbf{suspension operator}) is an odd linear map $s:V\ra sV$ that sends each homogeneous element of $V$ to the same element in $sV$ but with the opposite parity, that is, $s$ sends $v\in V_{\alpha}$ to $sv\in
(sV)_{\alpha+\bar 1}$ for all $\alpha\in\Z_2$.

Motivated by the constructions in~\cite{Sc,SZ}, we give the following notion.

\begin{prop-def}
Let $(V,\rho)$ be a representation of a Lie
    superalgebra $\G$. Define $\rho^s:\G\ra \mathfrak{gl}(sV)$ by
    \begin{equation}
        \rho^s(x)sv:=(-1)^{|x|}s(\rho(x)v),\;\;\forall x\in \G, sv\in sV.
        \label{eq:4.8}
    \end{equation}
    Then $(sV,\rho^s)$ is a representation  of $\G$, called the \textbf{\parityrev} of the representation $(V,\rho)$ of $\G$.
\end{prop-def}

We provide the following duality between even and odd $\OO$-operators.

\begin{theorem}\label{thm:sV}
Let $\G$ be a Lie superalgebra. Suppose that $(V,\rho)$ is a
representation of $\G$ and  $(sV,\rho^s)$ is the \parityrev
representation of $\G$. 
Then there exists a one-one correspondence between
$\OO_{\alpha}(\G;V,\rho)$ and $\OO_{\alpha+\bar 1}(\G; sV,\rho^s)$ for $\alpha\in \Z_2$.
\end{theorem}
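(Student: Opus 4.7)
My plan is to construct the correspondence explicitly via precomposition with the suspension operator. Given a homogeneous linear map $T\in \OO_\alpha(\G;V,\rho)$, I define $T^s: sV\longrightarrow\G$ by
\[
T^s(sv):= T(v),\quad\forall v\in V,
\]
equivalently $T^s = T\circ s^{-1}$. Since $s^{-1}$ is odd, $T^s$ is homogeneous with $|T^s|=\alpha+\bar 1$. The inverse correspondence is similarly given by $S\mapsto S\circ s$, so the bijection between the underlying spaces of homogeneous linear maps is automatic. The entire content of the theorem therefore reduces to verifying that $T^s$ satisfies the $\OO$-operator identity for $(sV,\rho^s)$ exactly when $T$ satisfies it for $(V,\rho)$.

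To do this, I would substitute $sv, sw\in sV$ into the defining relation of Definition~\ref{defn:2.9} for $T^s$, and reduce the right-hand side using the two basic identities $T^s(sx)=T(x)$ and $\rho^s(x)sv=(-1)^{|x|}s(\rho(x)v)$ from Eq.~\eqref{eq:4.8}. Concretely, unraveling the inner expression in one term gives
\[
T^s\bigl(\rho^s(T^s(sv))(sw)\bigr)=(-1)^{|T(v)|}\,T^s\bigl(s(\rho(T(v))w)\bigr)=(-1)^{\alpha+|v|}\,T(\rho(T(v))w),
\]
and symmetrically for the other term. The left-hand side is of course just $[T(v),T(w)]$.

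The main technical step is then verifying the sign bookkeeping: writing $|T^s|=\alpha+\bar 1$ and $|sv|=|v|+\bar 1$, the prefactor $(-1)^{(|T^s|+|sv|)|T^s|}$ in the $\OO$-operator identity for $T^s$ equals $(-1)^{(\alpha+|v|)(\alpha+\bar 1)}$, and after multiplying by the extra $(-1)^{\alpha+|v|}$ introduced by unwinding $\rho^s$ and $T^s$, it collapses precisely to $(-1)^{(\alpha+|v|)\alpha}=(-1)^{(|T|+|v|)|T|}$, the corresponding prefactor in the identity for $T$. An analogous computation shows that $(-1)^{|sv|(|T^s|+|sw|)}\cdot(-1)^{\alpha+|w|}$ reduces to $(-1)^{|v|(|T|+|w|)}$. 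Consequently the whole right-hand side for $T^s$ equals
\[
T\bigl((-1)^{(|T|+|v|)|T|}\rho(T(v))w-(-1)^{|v|(|T|+|w|)}\rho(T(w))v\bigr),
\]
which coincides with the right-hand side of Eq.~\eqref{eq:oop} for $T$.

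I expect the only real obstacle to be keeping the many $\bar 1$'s in the exponents straight; the underlying structure is simple since the assignment is just precomposition with $s^{-1}$. There are no convergence or existence issues to worry about: once the sign identities are checked, the equivalence of the two $\OO$-operator conditions is term-by-term, and the inverse map $S\mapsto S\circ s$ plainly takes elements of $\OO_{\alpha+\bar 1}(\G;sV,\rho^s)$ back into $\OO_\alpha(\G;V,\rho)$, completing the bijection.
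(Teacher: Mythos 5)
Your proposal is correct and is essentially identical to the paper's own proof: the paper also defines $T^s(sv):=T(v)$, uses $T^s(\rho^s(z)su)=(-1)^{|z|}T(\rho(z)u)$ to unwind the parity-reversed action, and checks exactly the same sign identities $(|T^s|+|sv|)|T^s|=(|T|+|v|)|T|+(|T|+|v|)$ and $|sv|(|T^s|+|sw|)=|v|(|T|+|w|)+(|T|+|w|)$ modulo $2$, with the inverse given by the same suspension construction applied to $sV$.
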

This correspondence between $\OO_{\alpha}(\G;V,\rho)$ and $\OO_{\alpha+\bar 1} (\G;sV,\rho^s)$  for $\alpha\in \Z_2$ is called the {\bf $\OO$-operator duality}.
\begin{proof}
Let $T:V\ra \G$ be a homogeneous linear map.
Define another homogeneous linear map
\begin{equation}\label{eq:3.10}
T^s:  sV\longrightarrow \G  \; \textrm{ by }  \;T^s(su):=T(u), \;\;\forall su\in sV.
\end{equation}
Note that $s$ is an odd map, giving $|T^s|=|T|+\bar 1$.
For all $z\in \G$ and $su\in sV$, we have
$$T^s(\rho^s(z)su)=T^s((-1)^{|z|}s(\rho(z)u))=(-1)^{|z|}T(\rho (z)u).$$

Now let $T\in \OO_{\alpha}(\G;V, \rho)$. To verify that $T^s$ is
in $\OO_{\alpha+\bar 1}(\G; sV, \rho^s)$, for all
$sv$ and $sw$ in $sV$, we derive
\begin{eqnarray}\label{eq:Ts}
&&[T^s(sv), T^s(sw)]
=[T(v), T(w)]\nonumber\\
&=&T((-1)^{(|T|+|v|)|T|}\rho(T(v))w-(-1)^{|v|(|T|+|w|)}\rho(T(w))v)\nonumber\\
&=&T^s((-1)^{(|T|+|v|)|T|+(|T|+|v|)}\rho^s(T(v))sw-(-1)^{|v|(|T|+|w|)+(|T|+|w|)}\rho^s(T(w))sv)\\
&=&T^s((-1)^{(|T^s|+|sv|)|T^s|}\rho^s(T^s(sv))sw-(-1)^{|sv|(|T^s|+|sw|)}\rho^s(T^s(sw))sv),\nonumber
\end{eqnarray}
as needed.

Conversely,  suppose that $T^s$ is in $\OO_{\alpha+\bar 1}(\G; sV, \rho^s)$. Then  the linear map $T=(T^s)^s:V\ra \G$ given by Eq.~\eqref{eq:3.10} has degree $\alpha$ and satisfies $$[T(v), T(w)]=T((-1)^{(|T|+|v|)|T|}\rho(T(v))w-(-1)^{|v|(|T|+|w|)}\rho(T(w))v), \;\;\forall v, w \in V,$$
by the same computation as for Eq. ~\eqref{eq:Ts}. Thus $T$ is in $\OO_{\alpha}(\G;V, \rho)$.
\end{proof}

\begin{remark}\label{rmk:3.12}
One naturally asks whether the duality between even and odd
$\mathcal O$-operators in Theorem \ref{thm:sV} can be used to
give a similar duality between  even  and odd super $r$-matrices in a Lie superalgebra $\G$ by
Theorem ~\ref{thm:1}. Unfortunately,  this is not true in
general. In fact, suppose that $r\in \Sol_{|r|}(\G)$ is \pansym. Then
$T_r\in\OO_{|r|}(\G;\G^*,\ad^*)$ and we have another $\mathcal
O$-operator $T_r^s\in\OO_{|r|+\bar 1}(\G;s\G^*,(\ad^*)^s)$.
However, $T_r^s$ is in general no longer an $\mathcal O$-operator
associated to the coadjoint representation
$(\G^*, \ad^*)$ of $\G$ and thus it cannot give a super
$r$-matrix in $\G$.

Similarly, for a Rota-Baxter operator $R:\G\longrightarrow\G$ of weight zero. It is an $\mathcal O$-operator associated to the adjoint representation $(\G, {\rm ad})$. Then $R^s:s\G\longrightarrow \G$ is  an $\mathcal O$-operator associated to the \parityrev representation $(s\G,{\rm ad}^s)$. However, $R^s$ is not a Rota-Baxter operator of weight zero on $\G$ in general.
\end{remark}

For an $\OO$-operator with a given parity, Theorem~ \ref{thm:sV}
gives another $\OO$-operator with the opposite parity. Applying
Theorem~\ref{thm:2}, the pair gives a pair of even and odd
super $r$-matrices in (different)
semi-direct product Lie superalgebras. This phenomenon
demonstrates the advantage of the operator approach to the
\supcybe.

\begin{theorem}\label{cor:cons}
Let $(V,\rho)$ be a representation of a Lie superalgebra $\G$ and $T:V\longrightarrow \G$ be  a homogeneous linear map.
Then the following statements are equivalent:
\begin{enumerate}
\item $T$ is in $\OO_{|T|}(\G;V, \rho)$; \label{it:const}
\item $T^s$ is in $\OO_{|T|+\bar 1}(\G;sV, \rho^s)$; \label{it:consts}
\item $r_T$ given by $T$ via Eq.~\eqref{eq3.2} is in $\Sol_{|T|}(\G \ltimes_{\rho^*}V^*)$; \label{it:consr}
\item $r_{T^s}$ given by $T^s$ via Eq.~\eqref{eq3.2} is in $\Sol_{|T|+\bar 1}(\G \ltimes_{{(\rho^s)}^*}(sV)^*)$; \label{it:consrs}
\item $T_{r_T}$ given by $r_T$  via Eq.~ \eqref{eq:2.1} is in $\OO_{|T|}(\G \ltimes_{\rho^*}V^*; (\G \ltimes_{\rho^*}V^*)^*, \ad^*)$;\label{it:constr}
\item \label{it:constrs}
$T_{r_{T^s}}$ given by  $r_{T^s}$  via Eq.~ \eqref{eq:2.1} is in $\OO_{|T|+\bar 1}(\G \ltimes_{{(\rho^s)}^*}(sV)^*; (\G \ltimes_{{(\rho^s)}^*}(sV)^*)^*,\ad^*)$.
\end{enumerate}
\end{theorem}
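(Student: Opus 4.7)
The plan is to observe that this theorem is essentially a bookkeeping compilation of the three separate correspondences that have just been established, and to chain them together. Concretely, I would organize the six statements into two columns based on parity: (\ref{it:const}), (\ref{it:consr}), (\ref{it:constr}) on the $|T|$-side and (\ref{it:consts}), (\ref{it:consrs}), (\ref{it:constrs}) on the $|T|+\bar 1$-side. Corollary~\ref{cor:equv}, which was proved by combining Theorems~\ref{thm:2} and \ref{thm:1}, already supplies the horizontal equivalences (\ref{it:const})~$\Leftrightarrow$~(\ref{it:consr})~$\Leftrightarrow$~(\ref{it:constr}) within the first column. The vertical bridge between the two columns is provided by Theorem~\ref{thm:sV}, the $\OO$-operator duality, which matches $T\in \OO_{|T|}(\G;V,\rho)$ with $T^s\in \OO_{|T|+\bar 1}(\G;sV,\rho^s)$.

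Thus the first step is to invoke Theorem~\ref{thm:sV} to obtain (\ref{it:const})~$\Leftrightarrow$~(\ref{it:consts}). The second step is to apply Corollary~\ref{cor:equv} once with input $(V,\rho)$ and $T$ to get (\ref{it:const})~$\Leftrightarrow$~(\ref{it:consr})~$\Leftrightarrow$~(\ref{it:constr}). The third and final step is to apply Corollary~\ref{cor:equv} a second time, now with the parity-reversed input $(sV,\rho^s)$ and the map $T^s$, which yields (\ref{it:consts})~$\Leftrightarrow$~(\ref{it:consrs})~$\Leftrightarrow$~(\ref{it:constrs}). Chaining the three pieces together produces the full six-way equivalence.

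The only genuine content to check is that Corollary~\ref{cor:equv} may legitimately be applied to the pair $(sV,\rho^s)$ and the map $T^s$. For this I need two small compatibility observations. First, $(sV,\rho^s)$ is a bona fide representation of $\G$ by the Proposition-Definition preceding Theorem~\ref{thm:sV}, and $T^s:sV\to \G$ is a homogeneous linear map of parity $|T|+\bar 1$ by Eq.~\eqref{eq:3.10}, so the hypotheses of Corollary~\ref{cor:equv} are met. Second, the associated tensor $r_{T^s}$, constructed via Eq.~\eqref{eq3.2} from the basis of $sV$ dual to a basis of $(sV)^*$, is automatically pan-supersymmetric, which is exactly the hypothesis needed to identify $T_{r_{T^s}}$ with an $\OO$-operator associated to the coadjoint representation via Theorem~\ref{thm:1}.

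The proof involves no new computation; the main expository obstacle is simply being careful that every instance of ``$\OO$-operator'' in statements (\ref{it:constr}) and (\ref{it:constrs}) refers to the coadjoint representation of the \emph{enlarged} semi-direct product Lie superalgebra, and that the parity shifts track correctly between the two applications of Corollary~\ref{cor:equv}. Once this is made explicit, the six-way equivalence is immediate.
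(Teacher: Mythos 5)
Your proposal is correct and follows essentially the same route as the paper: the paper likewise applies Corollary~\ref{cor:equv} twice (once to $(V,\rho)$ with $T$ and once to $(sV,\rho^s)$ with $T^s$) and bridges the two chains with Theorem~\ref{thm:sV}. Your extra remarks verifying that $(sV,\rho^s)$ and $T^s$ satisfy the hypotheses of Corollary~\ref{cor:equv} are a reasonable bit of added care but do not change the argument.
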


\begin{proof}
 By Corollary~\ref{cor:equv}, we have $\eqref{it:const} \iff \eqref{it:consr} \iff  \eqref{it:constr}$ and, at the same time, $\eqref{it:consts} \iff \eqref{it:consrs} \iff \eqref{it:constrs}$.
It follows from Theorem~\ref{thm:sV} that $\eqref{it:const} \iff \eqref{it:consts}$ and thus the conclusion holds.
\end{proof}

To describe the above correspondence  explicitly, fix
homogeneous bases $\{e_j\,|\,1\leqslant j\leqslant l\}$  and
$\{v_i\,|\,1\leqslant i\leqslant m\}$ of   a  Lie superalgebra $\G$ and a representation $(V,\rho)$ of  $\G$  respectively. We denote by $\{e_j^*\,|\,1\leqslant j\leqslant l\}$ and $\{v_i^*\,|\,1\leqslant i\leqslant m\}$ the dual bases
  of $\G^*$ and $V^*$
respectively. Let $(sV)^*$ be the dual vector superspace of
the parity reverse vector superspace $sV$.
Then $\{sv_i\}$  forms a  homogeneous basis of $sV$ and  $\{(sv_i)^*\}$ is the dual basis of $(sV)^*$ with
$|sv_i|=|(sv_i)^*|=|v_i|+\bar 1$ for $1\leqslant i
\leqslant m$. \rxd{The description on bases has been rewritten. Please check.}

By Eq.~\eqref{eq3.2} and $T^s(sv)=T(v)$ for all $sv\in sV$, the correspondence between \eqref{it:consr} and \eqref{it:consrs} can be written explicitly as follows.

\begin{coro} \label{co:rrs}
Let $(V,\rho)$ be a representation of a Lie superalgebra $\G$ and
$T:V\longrightarrow \G$ be  a homogeneous linear map. Then \begin{equation}\label{eq:expr} r_T=\sum_{i=1}^{m} (T(v_i) \otimes
v_i^*+(-1)^{(|T|+\bar 1)(|v_i|+\bar 1)}v^*_i\otimes T(v_i))
\end{equation}
is a \sr with degree $|T|$ in the Lie superalgebra $\G \ltimes_{\rho^*}V^*$
if and only if
\begin{eqnarray}\label{eq:exprs}
r_{T^s}=\sum_{i=1}^{m} (T(v_i) \otimes
(sv_i)^*+(-1)^{|T||v_i|}(sv_i)^*\otimes T(v_i))
\end{eqnarray}
is a \sr with degree $|T|+\bar 1$ in the Lie superalgebra $\G \ltimes_{{(\rho^s)}^*}(sV)^*$.
\end{coro}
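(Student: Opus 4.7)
The plan is to derive Corollary~\ref{co:rrs} as a direct consequence of the equivalence $(\ref{it:consr}) \iff (\ref{it:consrs})$ established in Theorem~\ref{cor:cons}. The only work left is therefore bookkeeping: I must expand the compact formula \eqref{eq3.2} with respect to the chosen homogeneous basis $\{v_i\}$ of $V$ (with dual $\{v_i^*\}$), and with respect to the induced basis $\{sv_i\}$ of $sV$ (with dual $\{(sv_i)^*\}$), and then reconcile the resulting sign factors. Once the two explicit tensor expressions match the ones displayed in the statement, the corollary follows instantly from Theorem~\ref{cor:cons}.

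First, I will write $r_T$ using \eqref{eq3.2} with the basis $\{v_i\}$. Since $T$ is homogeneous of degree $|T|$, the $2$-tensor $\mathbf{T}=\sum_i T(v_i)\otimes v_i^*$ satisfies $|\mathbf{T}|=|T|$, so the sign $(-1)^{(|\mathbf{T}|+\bar 1)(|v_i|+\bar 1)}$ in the second summand of \eqref{eq3.2} becomes $(-1)^{(|T|+\bar 1)(|v_i|+\bar 1)}$, yielding exactly Eq.~\eqref{eq:expr}. This also confirms that $r_T$ is homogeneous of degree $|T|$, consistent with $r_T\in \Sol_{|T|}(\G\ltimes_{\rho^*}V^*)$.

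Next I will apply \eqref{eq3.2} to $T^s$, using the basis $\{sv_i\}$ of $sV$ (with dual $\{(sv_i)^*\}$). The two key substitutions are $T^s(sv_i)=T(v_i)$ from the definition \eqref{eq:3.10}, and the parity shifts $|sv_i|=|v_i|+\bar 1$ and $|T^s|=|T|+\bar 1$. The critical sign becomes
\begin{equation*}
(-1)^{(|T^s|+\bar 1)(|sv_i|+\bar 1)}=(-1)^{(|T|+\bar 1+\bar 1)(|v_i|+\bar 1+\bar 1)}=(-1)^{|T||v_i|},
\end{equation*}
which is exactly the sign appearing in Eq.~\eqref{eq:exprs}. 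Hence the tensor expression produced by \eqref{eq3.2} for $T^s$ coincides with the one displayed for $r_{T^s}$, and its total parity is $|T^s|=|T|+\bar 1$, matching $r_{T^s}\in \Sol_{|T|+\bar 1}(\G\ltimes_{(\rho^s)^*}(sV)^*)$.

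The only possible obstacle is the sign reconciliation in the previous paragraph, but that is a one-line identity in $\Z_2$. With both explicit formulas verified, the equivalence between $r_T$ being a super $r$-matrix in $\G\ltimes_{\rho^*}V^*$ and $r_{T^s}$ being a super $r$-matrix in $\G\ltimes_{(\rho^s)^*}(sV)^*$ reduces to items \eqref{it:consr} and \eqref{it:consrs} of Theorem~\ref{cor:cons}, which are equivalent via \eqref{it:const} and \eqref{it:consts} by Theorem~\ref{thm:sV}. This completes the proposed proof.
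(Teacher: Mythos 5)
Your proposal is correct and follows essentially the same route as the paper: the paper likewise obtains Corollary~\ref{co:rrs} as the explicit basis form of the equivalence $\eqref{it:consr}\iff\eqref{it:consrs}$ in Theorem~\ref{cor:cons}, using Eq.~\eqref{eq3.2} together with $T^s(sv)=T(v)$, $|T^s|=|T|+\bar 1$ and $|sv_i|=|v_i|+\bar 1$ to reconcile the sign $(-1)^{(|T^s|+\bar 1)(|sv_i|+\bar 1)}=(-1)^{|T||v_i|}$. Your sign verification is exactly the computation the paper leaves implicit.
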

We call this correspondence between even and odd \srs in
(different) Lie superalgebras {\bf $r$-matrix duality}.

Let $T:V\longrightarrow \G$ be  a homogeneous linear map. Suppose
that $T^*$ and $(T^s)^*$ are the dual maps of the linear maps $T$
and $T^s$ respectively, that is, for all $x^*\in \G^*,v\in V$ and
$ sv \in sV,$
$$\langle T^*(x^*), v\rangle=(-1)^{|T||x^*|}\langle x^*,
T(v)\rangle,\;\; \langle (T^s)^*(x^*),
sv\rangle=(-1)^{|T^s||x^*|}\langle x^*, T^s(sv)\rangle.$$

With the notations as above, for all $1\leqslant i \leqslant m$ and $1\leqslant j \leqslant
l$, the homogeneous
linear maps $T_{r_T}: V\oplus \G^*\ra \G\oplus V^*$ and
$T_{r_{T^s}}: sV\oplus \G^*\ra \G\oplus (sV)^*$ are given
by
\begin{equation}\label{eq:28}
    T_{r_T}(v_i):=T(v_i), T_{r_T}(e_j^*):=-(-1)^{|T|} T^*(e_j^*),
\end{equation}
\begin{eqnarray}\label{eq:29}
    T_{r_{T^s}}(sv_i):=T^s(sv_i)=T(v_i), T_{r_{T^s}}(e_j^*):=
    (-1)^{|T|} (T^s)^*(e_j^*).
\end{eqnarray}
 In fact, for $r_{T^s}$ given by Eq.~(\ref{eq:exprs}), from Eqs.~ \eqref{eq:2.7} and  \eqref{eq:2.1} we obtain
\begin{equation*}
    \langle e_j^*, T_{r_{T^s}}(sv_i)\rangle=(-1)^{|T^s||sv_i|}\langle
    e_j^*\otimes sv_i, r_{T^s}\rangle
    =\sum_{k=1}^{m}(-1)^{|sv_i||sv_k|}\langle e_j^*, T(v_k)\rangle
    \langle sv_i, (sv_k)^*\rangle =\langle e_j^*, T(v_i)\rangle,
\end{equation*}
and
{\small\begin{equation*} \langle sv_i,
        T_{r_{T^s}}(e_j^*)\rangle =(-1)^{|T^s||e_j^*|}\langle sv_i\otimes
        e_j^*, r_{T^s}\rangle
        =(-1)^{|T^s||e_j^*|+|T||v_i|+|e_j^*||sv_i^*|+|sv_i|} \langle
        e_j^*, T^s(sv_i)\rangle
        =(-1)^{|T|}\langle  sv_i, (T^s)^*(e_j^*)\rangle.
\end{equation*}} We derive that $T_{r_{T^s}}(sv_i)=T(v_i), T_{r_{T^s}}(e_j^*)=
(-1)^{|T|}(T^s)^*(e_j^*)$ since the canonical pairing is
non-degenerate.
Similarly, for $r_{T}$ given by Eq.~(\ref{eq:expr}),
we have $T_{r_{T}}$ given as in Eq.~\eqref{eq:28}.

Then the correspondence between
\eqref{it:constr} and \eqref{it:constrs} is written explicitly as follows, providing another duality between  even and odd
$\OO$-operators of different Lie superalgebras, but both associated
to the coadjoint representations. This duality is called the {\bf second $\OO$-operator duality} to distinguish it from the $\OO$-operator duality in Theorem~\ref{thm:sV}.

\begin{coro}\label{coro:3.6}
Let $(V,\rho)$ be a representation of a  Lie superalgebra $\G$ and
$T:V\longrightarrow \G$ be  a homogeneous linear map.  Then the
linear map $T_{r_T}: V\oplus \G^*\ra \G\oplus V^*$ defined by
Eq.~\eqref{eq:28} is an $\mathcal O$-operator with degree $|T|$ of
the Lie superalgebra $\G \ltimes_{\rho^*}V^*$ associated to the
coadjoint representation if and only if the linear map
$T_{r_{T^s}}: sV\oplus \G^*\ra \G\oplus (sV)^*$ defined by Eq.~\eqref{eq:29} is an $\mathcal
O$-operator with degree $|T|+\bar 1$ of the Lie superalgebra $\G
\ltimes_{{(\rho^s)^*}}(sV)^*$ associated to the coadjoint
representation.
\end{coro}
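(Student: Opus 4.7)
The plan is to deduce the statement as an explicit reformulation of the equivalence \eqref{it:constr}$\iff$\eqref{it:constrs} in Theorem~\ref{cor:cons}, combined with the identification of the two abstract maps $T_{r_T}$ and $T_{r_{T^s}}$ with the concrete formulas~\eqref{eq:28} and~\eqref{eq:29} on the chosen bases.

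First, I would invoke Theorem~\ref{cor:cons}: the equivalence of \eqref{it:constr} and \eqref{it:constrs} says precisely that the $\OO$-operator property for $T_{r_T}$ associated to the coadjoint representation of $\G\ltimes_{\rho^*}V^*$ is equivalent to the $\OO$-operator property for $T_{r_{T^s}}$ associated to the coadjoint representation of $\G\ltimes_{(\rho^s)^*}(sV)^*$. Thus the task reduces to checking that the maps described in~\eqref{eq:28} and~\eqref{eq:29} are indeed the maps $T_{r_T}$ and $T_{r_{T^s}}$ produced from $r_T$ and $r_{T^s}$ by Eq.~\eqref{eq:2.1}.

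Second, I would verify the explicit formulas. The computation for $T_{r_{T^s}}$ has already been carried out in the paragraph immediately before the corollary: starting from the expression of $r_{T^s}$ in Eq.~\eqref{eq:exprs}, pairing with $e_j^*\otimes sv_i$ and with $sv_i\otimes e_j^*$ and applying Eqs.~\eqref{eq:2.1} and~\eqref{eq:2.7} together with the evenness and non-degeneracy of the canonical pairing yields the two formulas in~\eqref{eq:29}. A completely parallel computation starting from $r_T$ as in Eq.~\eqref{eq:expr} gives $T_{r_T}(v_i)=T(v_i)$ and $T_{r_T}(e_j^*)=-(-1)^{|T|}T^*(e_j^*)$, which is exactly~\eqref{eq:28}. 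The only point of care is that one must use $T^*$ (resp.\ $(T^s)^*$) via its defining relation, and that the summation over $k$ collapses through the biorthogonality of dual bases.

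I do not anticipate any conceptual obstacle; the proof is genuinely just sign bookkeeping. The main thing to watch is the origin of the discrepancy between the signs $-(-1)^{|T|}$ in~\eqref{eq:28} and $+(-1)^{|T|}$ in~\eqref{eq:29}: the former is produced by the coefficient $-(-1)^{|\mT|}$ in front of $\sigma(\mT)$ in Eq.~\eqref{eq3.2}, while the latter is the same construction applied to $T^s$, where the shift $|T^s|=|T|+\bar 1$ flips this sign. Once these identifications are in place, the corollary follows at once from Theorem~\ref{cor:cons}.
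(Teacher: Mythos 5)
Your proposal is correct and follows essentially the same route as the paper: the paper presents this corollary as the explicit form of the equivalence \eqref{it:constr}$\iff$\eqref{it:constrs} in Theorem~\ref{cor:cons}, having just carried out, in the paragraph preceding the corollary, precisely the pairing computation you describe to identify $T_{r_T}$ and $T_{r_{T^s}}$ with the formulas \eqref{eq:28} and \eqref{eq:29}. Your diagnosis of the sign discrepancy as originating from $|T^s|=|T|+\bar 1$ in the coefficient of $\sigma(\mT)$ is also consistent with the paper's computation.
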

Therefore, from an even or an odd $\mathcal O$-operator, one
obtains a parity pair of $\mathcal O$-operators, a parity pair of
\pansym \srs  in (different) Lie superalgebras, and a pair of
$\mathcal O$-operators  associated to the coadjoint
representations of Lie superalgebras. We use the following
diagram to summarize these relations, along with the
corresponding one for ordinary Lie algebras as comparison, extending the diagram in Eq.~\eqref{eq:dual} in the Introduction.
{\small$$ \xymatrix{
    \underline{\text{Lie algebra}}      &&   &\underline{\text{Lie superalgebra}}&\\
    T\ar@{<->}[d] &&  T \ar@{<->}[d] \ar@{<->}[rr]^{\OO \text{-operator duality}} &&T^s \ar@{<->}[d]   \\
    r_T\ar@{<->}[d] &&r_T \ar@{<->}[d] \ar@{<->}[rr]^{r\text{-matrix duality}}    &&r_{T^s}\ar@{<->}[d]\\
    T_{r_T} &&  T_{r_T} \ar@{<->}[rr]^{\text{second } \OO\text{-operator duality}}   &&T_{r_{T^s}}
}
$$}
Note that in the above diagram, the vertical correspondences
keep the parities, whereas
the horizontal correspondences exchange the parities and are called dualities.

\subsection{A tree family of \srs}\label{ss:tree}
A fascinating property in the
theory of the CYBE in Lie algebras is that, starting with a
skew-symmetric $r$-matrix $r$ in a Lie algebra $\G$, one obtains an
$\OO$-operator $T_r$ of $\G$ associated to the coadjoint
representation of $\G$. Then by the construction in~\cite{Bai},
the $\OO$-operator gives rise to a skew-symmetric
$r$-matrix $r_1$ to the CYBE in a semi-direct product Lie algebra
$\G_1:=\G\ltimes_\ad \G$. Repeating this procedure to $r_1$ gives
a third skew-symmetric $r$-matrix $r_2$ of the CYBE
in an even larger semi-direct product Lie algebra and eventually a
sequence of skew-symmetric $r$-matrices $r_n, n\geq
1$. We now show that a similar procedure from a \pansym
super $r$-matrix generates a ``super"
sequence of super $r$-matrices. Here a ``super" sequence
means that the generated family is multi-parameterized,
represented by an infinitely expanding tree.

Let $\{e_i\}$ be a homogeneous  basis of a  Lie superalgebra $\G$, $\{e_i^*\}$ be the dual  basis of   $\G^*$ and  $\{(e_i^*)^*\}$ be the double dual  basis of    $(\G^*)^*$ for $1\leqslant i\leqslant l$. Then  $\{se_i\}$  and $\{se_i^*\}$ are  homogeneous  bases of $s\G$ and $s\G^*$ respectively, and  we denote by   $\{(se_i^*)^*\}$   the dual basis of    $(s\G^*)^*$  with  $|se_i|=|se_i^*|=|(se_i^*)^*|=|e_i|+\bar 1$ for $1\leqslant i\leqslant l$.  Suppose that $r=\sum_{i,j=1}^{l} a_{ij}e_i\otimes e_j\in \G\otimes \G$ is a \pansym \sr.
By  Eq. ~ (\ref{eq:2.1}), the \op  $T_r:\G^*\longrightarrow \G$ has the form $T_r(e_i^*)=\sum_{j=1}^{l}(-1)^{|e_i^*|}a_{ji}e_j$  for $1\leqslant i\leqslant l$.
Take $(V, \rho)=(\G^*,\ad^*)$ in Corollary~ \ref{co:rrs}. Then
\begin{eqnarray*}
    r_{T_r}   &=&\sum_{i=1}^{l}\big(T_r(e_i^*)\otimes (e_i^*)^*+(-1)^{(|r|+\bar 1)(|e^*_i|+\bar 1)}(e_i^*)^* \otimes T_r(e_i^*)\big)\\
    &=&\sum_{i,j=1}^{l}(-1)^{|e_i^*|}a_{ji}\big(e_j\otimes (e_i^*)^*+(-1)^{(|r|+\bar 1)(|e^*_i|+\bar 1)}(e_i^*)^* \otimes e_j\big) \in \Sol_{|r|}(\G \ltimes_{(\ad^*)^*}(\G^*)^*)
\end{eqnarray*}
and
\begin{eqnarray*}
    r_{T_r^s}    &=&\sum_{i=1}^{l}\big(T_r^s(se_i^*)\otimes (se_i^*)^*+(-1)^{|e^*_i||r|}(se_i^*)^* \otimes T_r^s(se_i^*)\big)\\
    &=&\sum_{i,j=1}^{l}(-1)^{|e_i^*|}a_{ji}\big(e_j\otimes (se_i^*)^*+(-1)^{|e^*_i||r|}(se_i^*)^* \otimes e_j\big) \in  \Sol_{|r|+\bar 1}(\G \ltimes_{((\ad^*)^s)^*}(s\G^*)^*).
\end{eqnarray*}

Applying Lemma~\ref{prop:2.4} to the module isomorphisms
$$\theta: \G\ra (\G^*)^*, \quad
\quad e_i\mapsto (-1)^{|e_i|}(e_i^*)^*, $$
and
$$\varphi: (s\G^*)^*\ra
s\G, \quad (se_i^*)^*\mapsto se_i, $$
we obtain isomorphisms between semi-direct
product Lie superalgebras
$$\G \ltimes_{(\ad^*)^*}(\G^*)^*\cong \G
\ltimes_{\ad}\G, \quad \G \ltimes_{((\ad^*)^s)^*}(s\G^*)^*\cong \G
\ltimes_{\ad^s} s\G.$$
In summary, we obtain

\begin{prop} \label{pp:rpair}
    Let $\G$ be a   Lie superalgebra. If  $r=\sum_{i,j=1}^{l} a_{ij}e_i\otimes e_j\in \G\otimes \G$ is a \pansym \sr, then there are two pan-supersymmetric \srs in the semi-direct product Lie superalgebras
    \begin{equation} \label{eq:4.9}
        r_{T_r}=\sum_{i,j=1}^{l}a_{ji}\big((e_j,0)\otimes (0,e_i)+(-1)^{(|r|+\bar 1)(|e_i|+\bar 1)}(0,e_i)\otimes (e_j,0)\big)\in \Sol_{|r|}(\G \ltimes_{\ad}\G),
    \end{equation}
    and
    \begin{equation} \label{eq:4.10}
        r_{T_r^s}=\sum_{i,j=1}^{l}(-1)^{|e_i|}a_{ji}(e_j\otimes se_i+(-1)^{|e_i||r|}se_i \otimes e_j)\in  \Sol_{|r|+\bar 1}(\G \ltimes_{\ad^s} s\G),
      \end{equation}
where  $\{e_i\}$ and $\{se_i\}$ are homogeneous  bases of    $\G$  and $s\G$ respectively for  $1\leqslant i\leqslant l$. Here we use $(e_j,0)$  and $(0, e_i)$ to denote the elements in Lie superalgebra $\G$ and in the adjoint module $\G$ respectively to avoid confusion.
\end{prop}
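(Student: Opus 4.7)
The plan is to formalize the derivation carried out in the paragraphs immediately preceding the proposition. By Theorem~\ref{thm:1}, the pan-supersymmetric super $r$-matrix $r$ yields an $\OO$-operator $T_r \in \OO_{|r|}(\G; \G^*, \ad^*)$, with explicit expression $T_r(e_i^*) = (-1)^{|e_i^*|}\sum_j a_{ji}e_j$ coming from Lemma~\ref{lem2.6}. I would then apply Corollary~\ref{co:rrs} to $(V,\rho) = (\G^*, \ad^*)$ and $T = T_r$, producing super $r$-matrices
$$r_{T_r} \in \Sol_{|r|}(\G \ltimes_{(\ad^*)^*}(\G^*)^*),\qquad r_{T_r^s} \in \Sol_{|r|+\bar 1}(\G \ltimes_{((\ad^*)^s)^*}(s\G^*)^*),$$
whose explicit forms are obtained by substituting the formula for $T_r$ into Eqs.~\eqref{eq:expr} and \eqref{eq:exprs}.

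The remaining step is to recast these in the more natural Lie superalgebras $\G \ltimes_{\ad}\G$ and $\G \ltimes_{\ad^s}s\G$. For this, I would verify that $\theta: \G \to (\G^*)^*$, $e_i \mapsto (-1)^{|e_i|}(e_i^*)^*$, is an isomorphism between $(\G,\ad)$ and $((\G^*)^*, (\ad^*)^*)$ by using the signed pairing \eqref{eq:2.7}, and that $\varphi: (s\G^*)^* \to s\G$, $(se_i^*)^* \mapsto se_i$, is an isomorphism between $((s\G^*)^*, ((\ad^*)^s)^*)$ and $(s\G, \ad^s)$ by additionally invoking the defining identity \eqref{eq:4.8} of the parity-reverse representation. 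By Lemma~\ref{prop:2.4}, these lift to isomorphisms of the corresponding semi-direct product Lie superalgebras, and transporting the two super $r$-matrices through these isomorphisms yields the stated expressions \eqref{eq:4.9} and \eqref{eq:4.10} after collecting signs and using $|e_i^*|=|e_i|$.

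The only technically delicate point is the sign bookkeeping. In the passage to \eqref{eq:4.9} the factor $(-1)^{|e_i^*|}$ produced by Lemma~\ref{lem2.6} in $T_r(e_i^*)$ cancels the $(-1)^{|e_i|}$ built into $\theta$, which is why no such factor appears in \eqref{eq:4.9}; by contrast, no analogous cancellation occurs for \eqref{eq:4.10}, accounting for the visible extra $(-1)^{|e_i|}$ there. Once these checks are done — in particular the module-intertwining conditions for $\theta$ and $\varphi$ with the correct signs — the conclusion follows at once from the cited results, with no further independent verification of the super CYBE required.
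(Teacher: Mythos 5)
Your proposal is correct and follows essentially the same route as the paper: the paper's own argument is precisely the derivation in the paragraphs preceding the proposition, namely passing from $r$ to $T_r$ via Theorem~\ref{thm:1} and Lemma~\ref{lem2.6}, applying Corollary~\ref{co:rrs} with $(V,\rho)=(\G^*,\ad^*)$, and then transporting through the module isomorphisms $\theta$ and $\varphi$ via Lemma~\ref{prop:2.4}. Your sign bookkeeping (the cancellation of $(-1)^{|e_i^*|}$ against the $(-1)^{|e_i|}$ in $\theta$ for Eq.~\eqref{eq:4.9}, and the surviving $(-1)^{|e_i|}$ in Eq.~\eqref{eq:4.10}) matches the paper's computation exactly.
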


Thus starting with  a \pansym \sr, Proposition~\ref{pp:rpair}
gives a pair of new pan-supersymmetric super
$r$-matrices in the Lie superalgebras $\G \ltimes_{\ad}\G$ and $\G
\ltimes_{\ad^s} s\G$.
 This can be illustrated in the
following diagram.
$$ \xymatrix{
    r \ar@{->}[rr] && T_r \ar@{->}[rr]  \ar@{<->}[d]_{\OO\text{-operator}}^{\text{duality}} &&  r_{T_r}  \ar@{<->}[d]_{r\text{-matrix}}^{\text{duality}} \\
     &&T_r^s \ar@{->}[rr]
   &&r_{T_r^s} }
$$

Now iterating the same construction to this pair of new super $r$-matrices, we obtain a whole hierarchy of super $r$-matrices in a hierarchy of Lie superalgebras as follows.

To simplify the notations, for any Lie superalgebra $\G$ and any $r\in \Sol_{|r|}(\G)$, we abbreviate
$$ \G_+\coloneqq \G\ltimes_{\ad}\G, \quad \G_-\coloneqq \G\ltimes_{\ad^s} s\G$$
and \vspace{-.1cm}
$$r_+\coloneqq r_{T_r}\in \Sol_{|r|}(\G_+), \quad r_-\coloneqq r_{T_r^s}\in \Sol_{|r|+\bar{1}}(\G_-)$$
from Proposition~\ref{pp:rpair}, and utilize  similar notations
for the further derived Lie superalgebras and super $r$-matrices.   Thus for example
$$ \G_{+-}\coloneqq (\G_{+})_-\coloneqq \G_+ \ltimes_{\ad^s} s\G_+
= (\G\ltimes _{\ad} \G) \ltimes_{\ad^s} s(\G\ltimes_{\ad} \G)$$
and
$$ r_{+-}\coloneqq (r_+)_-\coloneqq r_{T_{r_+}^s}
= r_{T_{r_{T_r}}^s}$$
is in $\Sol_{|r|+\bar{1}}(\G_{+-})$.
With these abbreviations, from one super $r$-matrix $r\in \Sol_{|r|}(\G)$, we obtain the following tree hierarchy of \pansym super $r$-matrices.

\vspace{-.5cm}

\tikzstyle{level 1}=[level distance=2cm, sibling distance=3.2cm]
\tikzstyle{level 2}=[level distance=4.2cm, sibling distance=1.6cm]
\tikzstyle{level 3}=[level distance=2.7cm, sibling distance=0.8cm]
\tikzstyle{bag} = [text width=7em, text centered]
\tikzstyle{end} = [circle, minimum width=3pt,fill, inner sep=0pt]

\begin{equation}
\begin{split}
\begin{tikzpicture}[->,>=stealth',grow=right]
\node[bag] { $r  \in\Sol_{|r|}(\G)$}
    child {
        node[bag] {$r_{-}\in\Sol_{|r|+\bar 1}(\G_{-})$}
            child {
                node[bag]
                    {$r_{--} \in\Sol_{|r|}(\G_{--})$}
                child {
                node[label=right:
                    {$r_{---} \in\Sol_{|r|+\bar 1}(\G_{---})\cdots$}] {}
                edge from parent}
                child {
                node[label=right:
                    {$r_{--+} \in\Sol_{|r|}(\G_{--+})\cdots$}] {}
                edge from parent}
                 edge from parent}
            child {
                node[bag]
                    {$r_{-+} \in\Sol_{|r|+\bar 1}(\G_{-+})$}
                 child {
                node[label=right:
                    {$r_{-+-} \in\Sol_{|r|}(\G_{-+-})\cdots$}] {}
                edge from parent}
               child {
                node[label=right:
                    {$r_{-++} \in\Sol_{|r|+\bar 1}(\G_{-++})\cdots$}] {}
                edge from parent}
            edge from parent }
            edge from parent
          }
    child {
        node[bag] {$r_{+} \in\Sol_{|r|}(\G_{+})$}
        child {
                node[bag]
                    {$r_{+-} \in\Sol_{|r|+\bar 1}(\G_{+-})$}
                child {
                node[label=right:
                    {$r_{+--} \in\Sol_{|r|}(\G_{+--})\cdots$}] {}
                edge from parent}
                child {
                node[label=right:
                    {$r_{+-+} \in\Sol_{|r|+\bar 1}(\G_{+-+})\cdots$}] {}
                edge from parent}
                 edge from parent}
            child {
                node[bag]
                    {$r_{++} \in\Sol_{|r|}(\G_{++})$}
                 child {
                node[label=right:
                    {$r_{++-} \in\Sol_{|r|+\bar 1}(\G_{++-})\cdots$}] {}
                edge from parent}
               child {
                node[label=right:
                    {$r_{+++} \in\Sol_{|r|}(\G_{+++})\cdots$}] {}
                edge from parent}
            edge from parent }
        edge from parent
           };
\end{tikzpicture}
\end{split}
\label{di:tree}
\end{equation}
\vspace{-.2cm}

In fact, more generally, starting with either an even or an odd
$\OO$-operator $T$, Theorem~\ref{thm:sV} gives a pair of even and
odd $\OO$-operators $T$ and $T^s$, which then gives a pair of
pan-supersymmetric super $r$-matrices. To each of
these two super $r$-matrices, we can iterate the above
construction and again  get a similar tree hierarchy of super $r$-matrices.
\vspace{-.1cm}
\begin{exam}\label{ex:3.17}
    Consider  the Lie superalgebra $\G$ and the super $r$-matrices given in  Example \ref{exam:4.4}.
    The non-zero products in Lie superalgebra $\G\ltimes_{\ad}\G$ are given by
    \begin{eqnarray*}
        [(e,0), (f,0)]=(f,0), [(e,0), (0,f)]=(0,f), [(f,0), (0,e)]=(0,-f),
    \end{eqnarray*}
while  Lie superalgebra $\G\ltimes_{\ad^s} s\G$  is defined by non-zero products
    \begin{eqnarray*}
        [e, f]=f, [e, sf]=sf, [f, se]=sf.
    \end{eqnarray*}

    By Eqs. ~(\ref{eq:4.9}) and (\ref{eq:4.10}), starting with  the even skew-supersymmetric super $r$-matrix $r_{0}=f\otimes f$, we obtain both an even super $r$-matrix and an odd super $r$-matrix as follows:
    $$r_{T_{r_0}}=(f,0)\otimes (0,f)+(0,f)\otimes (f,0)  \in \Sol_{\bar 0}(\G \ltimes_{\ad}\G), $$
   $$ r_{T^s_{r_0}}=-f\otimes sf-sf\otimes f\in \Sol_{\bar 1}(\G \ltimes_{\ad^s} s\G).$$

    Similarly, starting with  the odd supersymmetric super $r$-matrix $r_{1}=e\otimes f+f\otimes e$, we obtain another pair of super $r$-matrices:
    $$r_{T_{r_1}}=(f,0)\otimes (0,e)+(e,0)\otimes (0,f)+(0,e)\otimes (f,0)+(0,f)\otimes (e,0)\in \Sol_{\bar 1}(\G \ltimes_{\ad}\G), $$
    $$r_{T^s_{r_1}}=-e\otimes sf+f\otimes se+sf\otimes e+se\otimes f\in \Sol_{\bar 0}(\G \ltimes_{\ad^s} s\G).$$
\end{exam}

\subsection{Self-reversing representations}\label{ss:selfrev}
Now we apply our general construction to the representations that
are isomorphic to their parity reverses.

\begin{defn}
    A representation $(V,\rho)$ of a Lie superalgebra $\G$ is called \textbf{self-reversing} if it is isomorphic to its \parityrev.
\end{defn}
A self-reversing representation can be constructed from any representation  of a Lie superalgebra $\G$ as follows.
\begin{lemma} \label{lem:selfrev}
    Let $(V,\rho)$ be a representation of a Lie superalgebra $\G$. Then the direct sum representation $(V\oplus sV, \rho+\rho^s)$ of $\G$ is self-reversing.
\end{lemma}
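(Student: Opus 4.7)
The plan is to exhibit an explicit even linear isomorphism
\[
    \phi:V\oplus sV\longrightarrow s(V\oplus sV)
\]
and verify that it satisfies the intertwining condition~\eqref{eq:equivalence} between $\rho+\rho^s$ and $(\rho+\rho^s)^s$.

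First I would define $\phi$ on the two summands separately. For $v\in V$ (viewed inside the first summand of $V\oplus sV$), set $\phi(v):=s(sv)$, where the inner $s$ is the suspension $V\to sV$ (placing $sv$ in the second summand of $V\oplus sV$) and the outer $s$ is the suspension $V\oplus sV\to s(V\oplus sV)$. For $sw\in sV$ (in the second summand), set $\phi(sw):=s(w)$ with $w$ now regarded as an element of the first summand. Since each application of $s$ shifts parity by $\bar 1$, a quick count shows that $\phi$ preserves the $\Z_2$-grading, so $\phi$ is even; the inverse is constructed by the same recipe in reverse, producing an isomorphism of vector superspaces.

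Next I would verify the equivariance $\phi\cdot(\rho+\rho^s)(x)=(\rho+\rho^s)^s(x)\cdot\phi$ for every homogeneous $x\in\G$, splitting the argument according to whether it lies in $V$ or in $sV$. In either case, expanding Eq.~\eqref{eq:4.8} once for $\rho^s$ inside $\rho+\rho^s$ and once for the outer $(\rho+\rho^s)^s$ produces a single factor $(-1)^{|x|}$ on each side, and these two factors cancel. Because $\phi$ swaps the roles of the two summands in the prescribed way, a $\rho^s$-action on one side of the identity gets converted into a $\rho$-action on the other side (and vice versa), which is exactly what is needed to match the two expressions.

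I do not anticipate any genuine obstacle; the principal subtlety is purely notational, namely to keep the various uses of the symbol $s$ (acting on $V$, on $sV$, and on $V\oplus sV$) unambiguously separated so that the Koszul signs coming out of Eq.~\eqref{eq:4.8} are tracked correctly in both cases of the case split.
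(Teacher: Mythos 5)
Your proposal is correct and follows essentially the same route as the paper: the paper identifies $s(V\oplus sV)$ with $sV\oplus V$, computes via Eq.~\eqref{eq:4.8} that $(\rho+\rho^s)^s=\rho^s+\rho$ (the two factors of $(-1)^{|x|}$ cancelling, exactly as you describe), and then swaps the summands, which is precisely the composite map $\phi$ you construct directly. The sign bookkeeping in your case split checks out, so no gap.
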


\begin{proof}
For all $z\in \G$ and $v\in V, su\in sV$, we have
\begin{eqnarray*}
(\rho+\rho^s)^s(z)(su, v)=(-1)^{|z|}(s(\rho(z)u), s(\rho^s(z)sv))=(\rho^s(z)su, \rho(z)v)=(\rho^s+\rho)(z)(su, v).
\end{eqnarray*}
The direct sum representation $\rho+\rho^s$  is isomorphic to $\rho^s+\rho$ and then isomorphic to $(\rho+\rho^s)^s$, that is,  the representation $(V\oplus sV, \rho+\rho^s)$ of $\G$ is self-reversing.
\end{proof}

\begin{exam}\label{exam:2.3}
    We consider  the  Lie superalgebra $\G=\frak{sl}(1|1)$ with a homogeneous basis $\{e_1, f_1, f_2\}$, where $e_1$  is even,  $f_1, f_2$ are odd and   the non-zero product is $[f_1, f_2]=e_1.$

    Let $\{v_1, v_2, w_1, w_2\}$ be a homogeneous basis of a $2|2$-dimensional vector superspace $V$ with $v_1, v_2$ even, $w_1, w_2$ odd.    Define a  representation  $(V,\rho)$  of $\G$   by taking $\rho(e_1)$ to be the identity map and
    \begin{eqnarray*}
        &&\rho(f_1) v_1=0,  \rho(f_1) v_2=w_2,  \rho(f_1) w_1=v_1,  \rho(f_1) w_2=0, \\
        &&\rho(f_2)v_1=w_1,  \rho(f_2) v_2=0,  \rho(f_2) w_1=0,  \rho(f_2) w_2=v_2.
    \end{eqnarray*}
    Then the $\G$-module $V$ is isomorphic to the direct sum of   $\G$-modules $V_1=\mathrm{span}\{v_1, w_1\}$ and $V_2=\mathrm{span}\{v_2, w_2\}$. Since $V_2$ is isomorphic to $sV_1$, Lemma~  \ref{lem:selfrev} yields that the representation $(V, \rho)$ is self-reversing.
\end{exam}

Any $\mathcal O$-operator of a Lie superalgebra $\G$ associated to
a representation $(V,\rho)$ can be extended to an $\mathcal
O$-operator of  $\G$ associated to the self-reversing
representation $(V\oplus sV, \rho+\rho^s)$. Explicitly,

\begin{coro}\label{coro:3.13}
    Let $(V,\rho)$ be a representation of a Lie superalgebra $\G$. For each
     $T\in \OO_{|T|}(\G;V,\rho)$, define a homogeneous linear map $\Ext{T}:V\oplus sV\rightarrow \G$ by
    \begin{equation}
        \Ext{T}(v, su):=T(v), \;\;\forall v\in V, su\in sV.
    \end{equation}
    Then $|\Ext{T}|=|T|$ and $\Ext{T}$ is in $\OO_{|T|}(\G;V\oplus sV,\rho+\rho^s)$.
\end{coro}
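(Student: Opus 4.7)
The plan is to reduce the $\OO$-operator identity for $\Ext{T}$ on $V\oplus sV$ to the identity already known for $T$ on $V$. First, the parity claim $|\Ext{T}|=|T|$ is immediate: a homogeneous element $(v,su)\in V\oplus sV$ of parity $\alpha$ forces $v\in V_\alpha$ and $u\in V_{\alpha+\bar 1}$, so $\Ext{T}(v,su)=T(v)\in\G_{|T|+\alpha}$.

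For the $\OO$-operator identity itself, the conceptually cleanest route is to observe that $\Ext{T}$ factors as $T\circ\pi_V$, where $\pi_V\colon V\oplus sV\to V$ is the canonical even projection. Since the action $\rho+\rho^s$ preserves each summand of the decomposition, $\pi_V$ is an even $\G$-module homomorphism from $(V\oplus sV,\rho+\rho^s)$ to $(V,\rho)$. I would then record a small auxiliary observation: if $\phi\colon(W,\mu)\to(V,\rho)$ is an even $\G$-module homomorphism and $T\in\OO_{|T|}(\G;V,\rho)$, then $T\circ\phi\in\OO_{|T|}(\G;W,\mu)$. This follows by direct substitution into Eq.~\eqref{eq:oop}, using $\rho(x)\phi(w)=\phi(\mu(x)w)$ to pull the argument inside $T$, and $|\phi(w)|=|w|$ to keep the Koszul signs unchanged. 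Applying the observation to $\phi=\pi_V$ gives the conclusion.

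Alternatively, one may verify the identity directly on homogeneous pairs $(v_1,su_1),(v_2,su_2)$: the left-hand side collapses to $[T(v_1),T(v_2)]$, and on the right-hand side one computes
$$(\rho+\rho^s)(\Ext{T}(v_i,su_i))(v_j,su_j)=(\rho(T(v_i))v_j,\;\rho^s(T(v_i))su_j),$$
whose $sV$-component is annihilated by $\Ext{T}$; what remains is precisely the $\OO$-operator identity for $T$ evaluated on $v_1,v_2$.

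The main obstacle is the standard sign bookkeeping, but it works smoothly here because $|(v_i,su_i)|=|v_i|$ for homogeneous elements, so the parity factors $(-1)^{(|\Ext{T}|+|(v_1,su_1)|)|\Ext{T}|}$ and $(-1)^{|(v_1,su_1)|(|\Ext{T}|+|(v_2,su_2)|)}$ in the identity for $\Ext{T}$ match verbatim the corresponding factors $(-1)^{(|T|+|v_1|)|T|}$ and $(-1)^{|v_1|(|T|+|v_2|)}$ in the identity for $T$. Thus no genuinely new computation is required beyond observing that the $sV$-summand is killed by $\Ext{T}$.
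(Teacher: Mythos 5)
Your proposal is correct, and your ``alternative'' direct verification is precisely the paper's own proof: the paper expands $[\Ext{T}(v_1,su_1),\Ext{T}(v_2,su_2)]=[T(v_1),T(v_2)]$, applies the identity for $T$, and then inserts the $\rho^s$-terms into the second slot for free because $\Ext{T}$ annihilates the $sV$-component. Your primary route is a genuinely different and somewhat more structural packaging of the same fact: you factor $\Ext{T}=T\pi_V$ through the canonical projection and invoke the general principle that precomposing an $\OO$-operator with an even module homomorphism yields an $\OO$-operator. That principle is a mild but real strengthening of Lemma~\ref{prop3.2}, which the paper states only for module \emph{isomorphisms} (where it moreover gives a bijection of $\OO$-operator sets); your version drops invertibility, and the verification you sketch --- using $\rho(x)\phi(w)=\phi(\mu(x)w)$ and $|\phi(w)|=|w|$ to leave the Koszul signs untouched --- is sound. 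What your route buys is reusability (the same lemma would handle any quotient or retract of a representation, not just the projection off $sV$), at the cost of stating one extra auxiliary result; the paper's computation is self-contained and only four lines long, so neither approach has a decisive advantage here. One small point worth making explicit in either write-up, which you do address correctly: a homogeneous element $(v,su)$ of $V\oplus sV$ has $|(v,su)|=|v|=|su|$, so the sign factors in Eq.~\eqref{eq:oop} for $\Ext{T}$ coincide verbatim with those for $T$.
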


\begin{proof}
It is obvious that $|\Ext{T}|=|T|$. For all $v_1, v_2 \in V$ and $su_1, su_2\in sV$, we have
\begin{eqnarray*}
&&[\Ext{T}(v_1, su_1),\Ext{T}(v_2, su_2)]\\
&=&[T(v_1), T(v_2)] \\
&=& T((-1)^{(|T|+|v_1|)|T|}\rho(T(v_1))v_2-
(-1)^{|v_1|(|T|+|v_2|)}\rho(T(v_2))v_1)\\
&=&\Ext{T}\Big((-1)^{(|T|+|v_1|)|T|}\rho(T(v_1))v_2-
(-1)^{|v_1|(|T|+|v_2|)}\rho(T(v_2))v_1, \\
&&\quad \;\;\;\;(-1)^{(|T|+|v_1|)|T|}\rho^s(T(v_1))su_2-
(-1)^{|v_1|(|T|+|v_2|)}\rho^s(T(v_2))su_1\Big)\\
&=&\Ext{T}((-1)^{(|T|+|v_1|)|T|}(\rho+\rho^s)(T(v_1))(v_2, su_2)-
(-1)^{|v_1|(|T|+|v_2|)}(\rho+\rho^s)(T(v_2))(v_1,su_1))\\
&=&\Ext{T}((-1)^{(|T|+|v_1|)|T|}(\rho+\rho^s)(\Ext{T}(v_1, su_1))(v_2, su_2)-
(-1)^{|v_1|(|T|+|v_2|)}(\rho+\rho^s)(\Ext{T}(v_2, su_2))(v_1,su_1)).
\end{eqnarray*}
Hence $\Ext{T}\in \OO_{|T|}(\G;V\oplus sV,\rho+\rho^s)$.
\end{proof}

\begin{coro}
    Let $(V,\rho)$ be a representation of a Lie superalgebra $\G$. For each
     $T\in \OO_{|T|}(\G;V,\rho)$, we have
    $$(\Ext{T})^s=\Ext{T^s}.$$
\end{coro}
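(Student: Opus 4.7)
The plan is to verify the identity by unfolding both sides of the equation directly from their definitions and then matching them under the natural identification of the underlying vector superspaces. The two maps
$(\widehat{T})^s$ and $\widehat{T^s}$ are a priori defined on different vector superspaces, so the first step is to spell out exactly what these domains are and to record the natural isomorphism between them.

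First I would compute $\widehat{T^s}$. Since $T^s \in \OO_{|T|+\bar{1}}(\G; sV, \rho^s)$ by Theorem~\ref{thm:sV}, Corollary~\ref{coro:3.13} (with $V$ replaced by $sV$ and $\rho$ replaced by $\rho^s$) produces a map $\widehat{T^s}\colon sV \oplus s(sV) \to \G$ defined on homogeneous elements by $\widehat{T^s}(sv, s(su)) = T^s(sv) = T(v)$, and of degree $|T|+\bar{1}$. Next I would compute $(\widehat{T})^s$. Starting from $\widehat{T}\colon V \oplus sV \to \G$ (of degree $|T|$), the \parityrev procedure (Eq.~\eqref{eq:3.10}) yields $(\widehat{T})^s\colon s(V \oplus sV) \to \G$ given by $(\widehat{T})^s\bigl(s(v, su)\bigr) = \widehat{T}(v, su) = T(v)$, and of degree $|T|+\bar{1}$. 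The parities of the two maps thus agree.

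To compare the two maps, I would invoke the canonical isomorphism of vector superspaces
$$ \psi\colon s(V \oplus sV) \longrightarrow sV \oplus s(sV), \quad s(v, su) \longmapsto (sv, s(su)), $$
which is the usual identification that commutes with taking direct sums and applying the suspension operator. Under $\psi$ the representations also match: a short computation mirroring the proof of Lemma~\ref{lem:selfrev} shows $(\rho + \rho^s)^s = \rho^s + (\rho^s)^s$ via $\psi$, so $\psi$ is in fact an isomorphism of representations $(s(V\oplus sV), (\rho+\rho^s)^s) \cong (sV \oplus s(sV), \rho^s + (\rho^s)^s)$. Composing $(\widehat{T})^s$ with $\psi^{-1}$ on the domain and comparing term by term with $\widehat{T^s}$, both maps send the distinguished element to $T(v)$, so they coincide.

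The content of the argument is essentially bookkeeping with the suspension operator, and the only subtlety is the identification $\psi$: one must be careful to distinguish the formal symbol $s(v, su)$ in $s(V\oplus sV)$ from the pair $(sv, s(su))$ in $sV \oplus s(sV)$ before declaring them equal. Once this identification is fixed, Lemma~\ref{prop3.2} guarantees that transporting $\widehat{T^s}$ along $\psi$ yields an $\OO$-operator of the same parity, and the equality $(\widehat{T})^s = \widehat{T^s}$ is immediate from evaluating both sides on a homogeneous generator.
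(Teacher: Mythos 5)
Your proposal is correct and follows essentially the same route as the paper: unwind both definitions, identify $s(V\oplus sV)$ with $sV\oplus s(sV)$ (the paper further identifies $s(sV)$ with $V$ and writes both domains as $sV\oplus V$), and observe that both maps send the corresponding homogeneous element to the same value $T(v)$. Your extra care in making the identification $\psi$ explicit and checking it intertwines the representations is sound bookkeeping that the paper leaves implicit.
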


\begin{proof}
It follows from Theorem ~\ref{thm:sV} and Corollary ~\ref{coro:3.13} that $(\Ext{T})^s, \Ext{T^s}:sV\oplus V \ra \G$ are both in $\OO_{|T|+\bar 1}(\G;sV\oplus V,\rho^s+\rho)$ and
\begin{eqnarray*}
(\Ext{T})^s(su, v)=\Ext{T}(u, sv)=T(u)=T^s(su)=\Ext{T^s}(su,v), \;\;\forall v\in V, su\in sV.
\end{eqnarray*} Hence the conclusion holds.
\end{proof}

Specialized to self-reversing representations, Theorem
\ref{thm:sV} and Lemma~\ref{prop3.2} imply that odd
$\OO$-operators and even $\OO$-operators of a Lie superalgebra
associated to the same representation can be obtained
from each other by parity duality.

\begin{coro}\label{coro:3.9}
    For a self-reversing representation  $(V, \rho)$ of a Lie superalgebra $\G$, we have $\OO_{|T|+\bar 1}(\G;V,\rho)=\{\widetilde{T}|\widetilde{T}=T^s\phi, T\in\OO_{|T|}(\G;V,\rho)\}$, where $T^s$ is obtained by $T$ via Eq.~\eqref{eq:3.10} and $\phi: V\longrightarrow sV$ is an isomorphism between $(V, \rho)$ and $(sV, \rho^s)$.
\end{coro}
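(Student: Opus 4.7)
The plan is to recognize the statement as the immediate composition of two one-one correspondences already established in the paper: the $\OO$-operator parity duality $T\mapsto T^s$ of Theorem~\ref{thm:sV}, and the transport-of-structure bijection of Lemma~\ref{prop3.2}. The self-reversing hypothesis supplies precisely the even module isomorphism $\phi\colon(V,\rho)\to(sV,\rho^s)$ needed to bring the target of $T^s$, which is $(sV,\rho^s)$, back to $(V,\rho)$.

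For the inclusion ``$\supseteq$'', I would start with $T\in\OO_{|T|}(\G;V,\rho)$. Theorem~\ref{thm:sV} gives $T^s\in\OO_{|T|+\bar 1}(\G;sV,\rho^s)$, and Lemma~\ref{prop3.2} applied to $\phi$ yields $T^s\phi\in\OO_{|T|+\bar 1}(\G;V,\rho)$; the parity is preserved along $\phi$ because $|\phi|=\bar 0$. For the reverse inclusion, I would start with $\widetilde T\in\OO_{|T|+\bar 1}(\G;V,\rho)$ and reverse the two steps: Lemma~\ref{prop3.2} applied to the inverse module isomorphism $\phi^{-1}\colon(sV,\rho^s)\to(V,\rho)$ produces $\widetilde T\phi^{-1}\in\OO_{|T|+\bar 1}(\G;sV,\rho^s)$, and Theorem~\ref{thm:sV} then furnishes a unique $T\in\OO_{|T|}(\G;V,\rho)$ with $T^s=\widetilde T\phi^{-1}$, namely $T:=(\widetilde T\phi^{-1})^s$ under the natural identification $(S^s)^s=S$ coming from $s(sV)\cong V$. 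Right-multiplying by $\phi$ then gives $T^s\phi=\widetilde T$, as required.

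I do not expect any serious obstacle: the construction is manifestly involutive, and the only thing to monitor is parity bookkeeping, which works out cleanly because $\phi$ is even (being a homomorphism of representations) and the suspension operator shifts parity by $\bar 1$. In particular, no further identity on $\OO$-operators needs to be verified beyond the two already-proven correspondences.
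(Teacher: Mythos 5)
Your proposal is correct and follows exactly the route the paper intends: the paper derives Corollary~\ref{coro:3.9} by combining the parity duality of Theorem~\ref{thm:sV} with the transport-of-structure bijection of Lemma~\ref{prop3.2}, which is precisely your two-step composition (and its reversal via $\phi^{-1}$ and de-suspension for the converse inclusion). The parity bookkeeping you note is right, since module isomorphisms are even by definition.
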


\begin{exam}\label{ex:3.7}
    Let $(V, \rho)$ be the self-reversing representation  of the Lie superalgebra $\G=\frak{sl}(1|1)$ in Example \ref{exam:2.3}. If a linear map $T:V\ra \G$ is  an even $\mathcal O$-operator of $\G$ associated to
    $(V,\rho)$, then a direct and lengthy computation shows that $T$ is in one of the following three forms:
    \begin{enumerate}
        \item $T_1(v_1)=k_1e_1, T_1(v_2)=k_2e_1, T_1(w_1)=0, T_1(w_2)=0, k_1\neq 0;$
        \item $T_2(v_1)=0, T_2(v_2)=k_2e_1, T_2(w_1)=0, T_2(w_2)=0, k_2\neq 0;$
        \item $T_3(v_1)=l_2e_1, T_3(v_2)=l_3e_1, T_3(w_1)=l_1f_1+l_2f_2, T_3(w_2)=l_3f_1+l_4f_2, l_1l_4=l_2l_3,$
    \end{enumerate}
    where $k_i,  l_j\in \F$ for  $i=1,2, j=1,2,3,4$.
    The invertible even linear map $\phi: V\longrightarrow sV$ defined by
    \begin{equation}\label{eq:33}
        \phi(v_1)=-sw_2, \phi(v_2)=sw_1,  \phi(w_1)=sv_2,  \phi(w_2)=-sv_1
    \end{equation}
    is an isomorphism between the representations $(V, \rho)$ and $(sV, \rho^s)$.
    It follows from Corollary \ref{coro:3.9} that  $\widetilde{T}=T^s\phi\in \OO_{\bar 1}(\G;V,\rho)$ is in one of the following three forms:
    \begin{enumerate}
        \item $\widetilde{T}_1(v_1)=0, \widetilde{T}_1(v_2)=0, \widetilde{T}_1(w_1)=k_2e_1, \widetilde{T}_1(w_2)=-k_1e_1, k_1\neq 0;$
        \item $\widetilde{T}_2(v_1)=0, \widetilde{T}_2(v_2)=0, \widetilde{T}_2(w_1)=k_2e_1, \widetilde{T}_2(w_2)=0, k_2\neq 0;$
        \item $\widetilde{T}_3(v_1)=-l_3f_1-l_4f_2, \widetilde{T}_3(v_2)=l_1f_1+l_2f_2, \widetilde{T}_3(w_1)=l_3e_1, \widetilde{T}_3(w_2)=-l_2e_1, l_1l_4=l_2l_3.$
    \end{enumerate}
\end{exam}

Set $\I_0:=\{1,\cdots,m\}$, $\I_1:=\{m+1, m+2,\cdots,2m\}$ and  $\I:=\I_0 \cup \I_1$. Let $\{v_i|i\in \I\}$ be a homogeneous basis of an $m|m$-dimensional vector superspace $V$   and $\{v_i^*|i\in \I\}$ be the dual basis of $V^*$ with  $|v_i|=|v_i^*|=\bar 0$  if $i \in \I_0$ and $|v_i|=|v_i^*|=\bar 1$ if $i\in\I_1$.
  Then $\{sv_i|i\in \I\}$   forms a  homogeneous basis of $sV$ and we denote by $\{(sv_i)^*|i\in \I\}$  the dual basis of  $(sV)^*$ with  $|sv_i|=|(sv_i)^*|=|v_i|+\bar 1$ for $i\in \I$.

\begin{theorem}\label{prop:3.10}
Let  $(V,\rho)$ be an $m|m$-dimensional self-reversing representation of a Lie superalgebra $\G$ and  $T\in \OO_{|T|}(\G;V, \rho)$.
Suppose that $\phi: V\longrightarrow sV$ is an isomorphism between the representations $(V,\rho)$ and $(sV,\rho^s)$
and $\phi^*: (sV)^*\longrightarrow V^*$  is its dual map. Then $r_{T^s}$  in Corollary~ \ref{co:rrs} is a \sr with degree $|T|+\bar 1$ in the Lie superalgebra $\G \ltimes_{\rho^*}V^*$ and has the form
\begin{eqnarray}\label{eq:r-s}
r_{T^s}&=&\sum_{i\in \I}\big(T(v_i)\otimes \phi^*((sv_i)^*)+(-1)^{|T||v_i|}\phi^*((sv_i)^*)\otimes T(v_i)\big)\nonumber\\
&=&\sum_{i\in \I}(T^s\phi(v_i)\otimes v_i^*+(-1)^{|T|+|T||v_i|}v_i^*\otimes T^s\phi(v_i)).
\end{eqnarray}
\end{theorem}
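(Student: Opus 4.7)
The plan is to obtain $r_{T^s}$ as a super $r$-matrix in $\G\ltimes_{\rho^*}V^*$ by two complementary routes, each producing one of the two expressions in Eq.~\eqref{eq:r-s}, and then to verify that the two expressions represent the same tensor. The key observation is that, because $\phi$ is an isomorphism $(V,\rho)\to (sV,\rho^s)$, the composite $T^s\phi$ makes sense as a map $V\to\G$, and at the same time $\phi^*$ identifies the two candidate semi-direct product Lie superalgebras. No direct verification of the \supcybe is needed beyond invoking results already established in Section~\ref{sec:cybe} and Section~\ref{ss:dual}.

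For the second equality in Eq.~\eqref{eq:r-s}, I would argue as follows. By Theorem~\ref{thm:sV}, $T^s\in \OO_{|T|+\bar 1}(\G;sV,\rho^s)$. Since $\phi$ is a $\G$-module isomorphism from $(V,\rho)$ to $(sV,\rho^s)$, Lemma~\ref{prop3.2} implies $T^s\phi\in \OO_{|T|+\bar 1}(\G;V,\rho)$. Corollary~\ref{cor:equv} then applies to $T^s\phi$, so the tensor $r_{T^s\phi}$ defined via Eq.~\eqref{eq3.2} is a super $r$-matrix of degree $|T|+\bar 1$ in $\G\ltimes_{\rho^*}V^*$. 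Simplifying the sign prefactor $(-1)^{(|T^s\phi|+\bar 1)(|v_i|+\bar 1)} = (-1)^{|T|(|v_i|+\bar 1)} = (-1)^{|T|+|T||v_i|}$ gives exactly the second displayed expression.

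For the first equality, I would apply Corollary~\ref{co:rrs} directly to $T$: the element
$$r_{T^s}=\sum_{i\in\I}\bigl(T(v_i)\otimes (sv_i)^* + (-1)^{|T||v_i|}(sv_i)^*\otimes T(v_i)\bigr)$$
is a super $r$-matrix in $\G\ltimes_{(\rho^s)^*}(sV)^*$. A short computation using $\rho^s(x)\phi=\phi\rho(x)$ together with Eq.~\eqref{eq:1.1} shows that the dual $\phi^*:((sV)^*,(\rho^s)^*)\to (V^*,\rho^*)$ is itself a $\G$-module isomorphism. By Lemma~\ref{prop:2.4}, $\id_\G\oplus \phi^*$ is then an isomorphism of Lie superalgebras $\G\ltimes_{(\rho^s)^*}(sV)^*\to \G\ltimes_{\rho^*}V^*$, and transporting $r_{T^s}$ along this isomorphism produces exactly the first displayed expression.

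The main obstacle, and essentially the only step requiring care, is matching the two expressions term by term. Writing $\phi(v_i)=\sum_j c_{ij}\,sv_j$, the evenness of $\phi$ forces $c_{ij}\ne 0$ only when $|v_j|=|v_i|+\bar 1$; unpacking the duality then gives $\phi^*((sv_j)^*)=\sum_i c_{ij}v_i^*$ and $T^s\phi(v_i)=\sum_j c_{ij}T(v_j)$, so the first summand of each expression equals $\sum_{i,j}c_{ij}\,T(v_j)\otimes v_i^*$. For the transposed summand, on the support we have $|v_j|=|v_i|+\bar 1$, hence $(-1)^{|T||v_i|}=(-1)^{|T||v_j|+|T|}$, so the two prefactors match. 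This reconciles the two expressions and completes the proof.
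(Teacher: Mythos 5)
Your proposal is correct and follows essentially the same route as the paper: both obtain the first expression by transporting $r_{T^s}$ from $\G\ltimes_{(\rho^s)^*}(sV)^*$ to $\G\ltimes_{\rho^*}V^*$ along the dual isomorphism $\phi^*$ (via Lemma~\ref{prop:2.4}), and both reconcile it with the second expression through the same coordinate computation, exploiting that the evenness of $\phi$ forces $|v_j|=|v_i|+\bar 1$ on the support of its matrix. Your extra observation that the second form is exactly $r_{T^s\phi}$ for the $\OO$-operator $T^s\phi\in\OO_{|T|+\bar 1}(\G;V,\rho)$ (via Lemma~\ref{prop3.2} and Corollary~\ref{cor:equv}) is a nice repackaging that the paper only makes explicit afterwards in Corollary~\ref{cor:equ}, but it does not change the substance of the argument.
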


\begin{proof}
With the notations as above, for all $i \in \I_0, j \in \I_1$, we set
$$\phi(v_i)=\sum_{k\in \I_1}p_{k-m,i}sv_k, \;\;\;\;\; \phi(v_j)=\sum_{l\in \I_0}q_{l,j-m}sv_l,$$
with coefficients in the base field $\F$. Then the even linear map $\phi^*: (sV)^*\longrightarrow V^*$ defined by
$$\phi^*((sv_i)^*)=\sum_{k\in \I_1}q_{i,k-m}v^*_k, \;\;\;\;\; \phi^*((sv_j)^*)=\sum_{l\in \I_0}p_{j-m,l}v^*_l, \;\;\forall i \in \I_0, j \in \I_1,$$
 is an isomorphism between the representations $((sV)^*, (\rho^s)^*)$ and $(V^*, \rho^*)$. Hence we obtain an isomorphism between the semi-direct product Lie superalgebras  $\G \ltimes_{(\rho^s)^*}(sV)^*\cong \G \ltimes_{\rho^*}V^*$. By the isomorphisms $\phi$ and $\phi^*$, we derive that the  \sr $r_{T^s}$  in the Lie  superalgebra $\G \ltimes_{\rho^*} V^*$ has the form
 \begin{eqnarray*}
r_{T^s}&=&\sum_{i\in \I}(T(v_i)\otimes \phi^*((sv_i)^*)+(-1)^{|T||v_i|}\phi^*((sv_i)^*)\otimes T(v_i))\\
&=&\sum_{i\in \I}(T^s(sv_i)\otimes \phi^*((sv_i)^*)+(-1)^{(|T^s|+\bar 1)(|(sv_i)^*|+\bar 1)}\phi^*((sv_i)^*)\otimes T^s(sv_i))\\
&=&\sum_{i\in \I_0, k\in \I_1}(T^s(q_{i,k-m}sv_i)\otimes v_k^*+v_k^*\otimes T^s(q_{i,k-m}sv_i))\\
&&+\sum_{l\in \I_0, j\in \I_1}(T^s(p_{j-m,l}sv_j)\otimes v_l^*-(-1)^{|T^s|}v_l^*\otimes T^s(p_{j-m,l}sv_j))
\\
&=&\sum_{k\in \I_1}(T^s(\phi(v_k))\otimes v_k^*+v_k^*\otimes T^s(\phi(v_k)))+\sum_{l\in \I_0}(T^s(\phi(v_l))\otimes v_l^*-(-1)^{|T^s|}v_l^*\otimes T^s(\phi(v_l)))\\
&=&\sum_{i\in \I}(T^s\phi(v_i)\otimes v_i^*+(-1)^{|T|+|T||v_i|}v_i^*\otimes T^s\phi(v_i)).
\end{eqnarray*}
This completes the proof.
\end{proof}

Thus under the self-reversing assumption, both an even super
$r$-matrix and an odd super $r$-matrix can be obtained in the same
Lie superalgebra $\G \ltimes_{\rho^*} V^*$:

\begin{coro}\label{cor:equ}
Let  $(V,\rho)$ be a self-reversing representation of a Lie
superalgebra $\G$  and $T:V\longrightarrow \G$ be  a homogeneous
linear map. Suppose that $\phi: V\longrightarrow sV$ is an
isomorphism between the representations $(V,\rho)$ and $(sV,\rho^s)$.
 Then the following statements are equivalent.
\begin{enumerate}
\item $T$ is in $\OO_{|T|}(\G;V, \rho)$; \item
$\widetilde{T}=T^s\phi$ is in $\OO_{|T|+\bar 1}(\G;V, \rho)$;
\item $r_T$ given by $T$ via Eq.~\eqref{eq3.2} is in
$\Sol_{|T|}(\G \ltimes_{\rho^*}V^*)$; \item $r_{T^s}$
given by $\widetilde{T}=T^s\phi$ via Eq.~\eqref{eq:r-s} is in
$\Sol_{|T|+\bar 1}(\G \ltimes_{\rho^*}V^*)$; \item $T_{r_T}$ given
by $r_T$  via Eq.~ \eqref{eq:2.1} is in $\OO_{|T|}(\G
\ltimes_{\rho^*}V^*; (\G \ltimes_{\rho^*}V^*)^*, \ad^*)$; \item
$T_{r_{T^s}}$ given by  $r_{T^s}$  via Eq.~
\eqref{eq:2.1} is in $\OO_{|T|+\bar 1}(\G \ltimes_{{\rho}^*}V^*;
(\G \ltimes_{{\rho}^*}V^*)^*,\ad^*)$. \label{it:equf}
\end{enumerate}
\end{coro}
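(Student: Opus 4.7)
The plan is to deduce Corollary~\ref{cor:equ} from Theorem~\ref{cor:cons} by transporting the four objects that live on the ``parity-reversed side'' (namely $T^s$, $r_{T^s}$, the representation $(sV,\rho^s)$, and the semi-direct product $\G\ltimes_{(\rho^s)^*}(sV)^*$) back to the ``original side'' using the self-reversing isomorphism $\phi:V\to sV$. Concretely, Theorem~\ref{cor:cons} already provides the six-way equivalence of (1), (3), (5) together with the auxiliary conditions
\begin{align*}
(2')\; &T^s\in\OO_{|T|+\bar 1}(\G;sV,\rho^s),\\
(4')\; &r_{T^s}\in\Sol_{|T|+\bar 1}(\G\ltimes_{(\rho^s)^*}(sV)^*),\\
(6')\; &T_{r_{T^s}}\in\OO_{|T|+\bar 1}(\G\ltimes_{(\rho^s)^*}(sV)^*;(\G\ltimes_{(\rho^s)^*}(sV)^*)^*,\ad^*).
\end{align*}
Thus it suffices to prove $(2')\iff(2)$, $(4')\iff(4)$, and $(6')\iff(6)$, each time using $\phi$ to identify the two sides.

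For $(2')\iff(2)$, I would apply Lemma~\ref{prop3.2} directly to the representation isomorphism $\phi:(V,\rho)\to(sV,\rho^s)$: it transforms any $\OO$-operator $T^s$ on the target representation into the $\OO$-operator $T^s\phi=\widetilde T$ on the source representation, and conversely. For $(4')\iff(4)$, I would use Lemma~\ref{prop:2.4}: the isomorphism $\phi$ dualizes to an even isomorphism $\phi^*:((sV)^*,(\rho^s)^*)\to(V^*,\rho^*)$ of representations of $\G$, which in turn yields an isomorphism of semi-direct products $\G\ltimes_{(\rho^s)^*}(sV)^*\cong\G\ltimes_{\rho^*}V^*$ fixing $\G$. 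The image of the element $r_{T^s}$ of $(4')$ under this isomorphism is then exactly the element displayed in Eq.~\eqref{eq:r-s} of Theorem~\ref{prop:3.10}; this identification is the real content of that theorem and can be invoked verbatim. For $(6')\iff(6)$, I would transport $T_{r_{T^s}}$ across the same Lie superalgebra isomorphism: this automatically carries coadjoint representations to coadjoint representations, so an $\OO$-operator on one side becomes an $\OO$-operator on the other.

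The main obstacle is keeping the sign bookkeeping straight when moving $r_{T^s}$ across the isomorphism induced by $\phi^*$. The raw definition in Eq.~\eqref{eq3.2} uses the basis $\{sv_i\}$ of $sV$ and its dual, whereas the formula in statement~(4) is written in the basis $\{v_i\}$ of $V$ and its dual, with a different overall sign pattern reflecting the parity shift $|sv_i|=|v_i|+\bar 1$. Fortunately, Theorem~\ref{prop:3.10} has already executed precisely this bookkeeping and produced Eq.~\eqref{eq:r-s}, so I would not reprove it but simply cite it as the explicit realization of the isomorphism $\G\ltimes_{(\rho^s)^*}(sV)^*\cong\G\ltimes_{\rho^*}V^*$ applied to $r_{T^s}$.

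In summary, once Theorem~\ref{cor:cons} is in hand, the self-reversing hypothesis is used exactly once at each of the three horizontal levels of the diagram preceding the corollary: Lemma~\ref{prop3.2} handles the $\OO$-operator level, Lemma~\ref{prop:2.4} and Theorem~\ref{prop:3.10} together handle the super $r$-matrix level, and the same Lie superalgebra isomorphism handles the coadjoint $\OO$-operator level. Combining these three equivalences with the already established chain from Theorem~\ref{cor:cons} gives the full six-way equivalence.
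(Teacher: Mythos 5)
Your proposal is correct and matches the paper's own argument, which simply cites Theorem~\ref{cor:cons}, Theorem~\ref{prop:3.10}, and Corollary~\ref{coro:3.9} (the latter being exactly your Lemma~\ref{prop3.2} step packaged for self-reversing representations); your write-up just makes the transport along $\phi$, $\phi^*$, and the induced semi-direct product isomorphism explicit at each of the three levels. The only point worth noting is that Theorem~\ref{prop:3.10} is stated assuming $T$ is already an $\OO$-operator, so for the full equivalence one should observe (as you implicitly do) that the identification of $r_{T^s}$ with the expression in Eq.~\eqref{eq:r-s} is a purely linear-algebraic consequence of the isomorphism and does not use that hypothesis.
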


\begin{proof}
 It follows by Theorems ~\ref{cor:cons} and
\ref{prop:3.10} and Corollary ~\ref{coro:3.9}.
\end{proof}

 Moreover, \op $T_{r_{T^s}}: V\oplus \G^*\ra \G\oplus V^*$ in
\eqref{it:equf} can be given more explicitly as follows.
By a discussion similar to Eq.~(\ref{eq:29}),
the \op $T_{r_{T^s}}$ given by  $r_{T^s}$ in Eq.~\eqref{eq:r-s} has the form
\begin{eqnarray}
T_{r_{T^s}}(v_i)=T^s\phi(v_i),
T_{r_{T^s}}(e_j^*)= (-1)^{|T|}\phi^*(T^s)^*(e_j^*),
\end{eqnarray}
 where $(T^s)^*$ is the  dual map of $T^s$, $\{v_i\}$ and $\{e^*_j\}$ are  homogeneous  bases of  $V$ and $\G^*$.

Under the assumption that the representation $(V,\rho)$ is self-reversing, we revisit the tree hierarchy of \srs  in the diagram in Eq.~\eqref{di:tree}.
For a given homogeneous $\OO$-operator $T$ in
$\OO_{|T|}(\G;V, \rho)$ in Corollary~\ref{cor:equ}, we obtain a
parity pair $r_T$ and $r_{T^s}$ of super $r$-matrices in the
same Lie superalgebra $\G\ltimes_{\rho^*} V^*$. Now starting from
this parity pair of super $r$-matrices in place of the single
super $r$-matrix $r$ in the construction of the tree hierarchy in
the diagram in Eq.~\eqref{di:tree}, at
each recursive step, we can replace the super $r$-matrix by a
parity pair of super $r$-matrices, again in the same Lie
superalgebra. Continuing this way, we obtain a tree hierarchy of
parity pairs of super $r$-matrices with the same tree structure as
in the diagram in Eq.~\eqref{di:tree},
but with each super $r$-matrix there replaced by a parity pair of
super $r$-matrices in the same Lie superalgebra.

\begin{exam}
Consider the Lie superalgebra $\G=\frak{sl}(1|1)$ and  the even $\mathcal O$-operator  $T_3$  of $\G$ associated to the self-reversing representation $(V,\rho)$ in Example ~ \ref{ex:3.7}.
By the isomorphism $\phi: V\longrightarrow sV$ given in Eq. ~(\ref{eq:33}), we obtain an isomorphism $\phi^*$ between the representations $((sV)^*, (\rho^s)^*)$ and $(V^*, \rho^*)$ defined by
\begin{equation*}
\phi^*((sv_1)^*)=-w_2^*, \phi^*((sv_2)^*)=w_1^*,  \phi^*((sw_1)^*)=v_2^*,  \phi^*((sw_2)^*)=-v_1^*.
\end{equation*}

It follows from Eqs.~\eqref{eq:expr} and  ~ \eqref{eq:r-s} that
\begin{eqnarray*}
r_T&=&l_2e_1\otimes v_1^*+ l_3e_1\otimes v_2^*+(l_1f_1+l_2f_2)\otimes w_1^*+(l_3f_1+l_4f_2)\otimes w_2^* \\
&&-v_1^*\otimes l_2e_1-v_2^*\otimes l_3e_1+w_1^*\otimes (l_1f_1+l_2f_2)+ w_2^*\otimes (l_3f_1+l_4f_2), \\
r_{T^s}&=&-l_2e_1\otimes w_2^*+ l_3e_1\otimes w_1^*+(l_1f_1+l_2f_2)\otimes v_2^*-(l_3f_1+l_4f_2)\otimes v_1^* \\
&&-w_2^*\otimes l_2e_1+w_1^*\otimes l_3e_1+v_2^*\otimes (l_1f_1+l_2f_2)- v_1^*\otimes (l_3f_1+l_4f_2)
\end{eqnarray*}
with $l_1l_4=l_2l_3$, are even and odd \srs in a common  Lie superalgebra $\G \ltimes_{\rho^*} V^*$, which is defined by the following non-zero products
\begin{eqnarray*}
        &&[e_1, v_1^*]=-v_1^*,  [e_1, v_2^*]=-v_2^*,  [e_1, w_1^*]=-w_1^*,  [e_1, w_2^*]=-w_2^*, \\
        &&[f_1, v_1^*]=-w_1^*,   [f_1, w_2^*]=v_2^*,  [f_2, v_2^*]=-w_2^*,   [f_2, w_1^*]=v_1^*,  [f_1, f_2]=e_1.
    \end{eqnarray*}
\end{exam}

We end this section by giving  a general construction of a parity pair of \srs in the same Lie superalgebra.
\begin{coro}
Let  $(V,\rho)$ be a representation of a Lie superalgebra $\G$ and  $T\in \OO_{|T|}(\G;V, \rho)$.
Then $r_T:=\mT-(-1)^{|T|}\sigma(\mT)$ and   $r_{T^s}:=\mT^s-(-1)^{|T^s|}\sigma(\mT^s)$ 
are a pair of even and odd \srs in the Lie superalgebra $\G \ltimes_{\rho^*+(\rho^s)^*}(V^*\oplus (sV)^*)$.
\end{coro}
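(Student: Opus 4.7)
The plan is to derive the corollary by invoking Theorem~\ref{thm:2} twice—once for $T$ and once for $T^s$—and then transporting the two resulting super $r$-matrices into the common ambient Lie superalgebra by an embedding argument.

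First I would apply Theorem~\ref{thm:2} directly to $T$ to conclude that $r_T\in\Sol_{|T|}(\G\ltimes_{\rho^*}V^*)$. By Theorem~\ref{thm:sV}, the parity-reverse $T^s$ lies in $\OO_{|T|+\bar 1}(\G;sV,\rho^s)$, so a second application of Theorem~\ref{thm:2} yields $r_{T^s}\in\Sol_{|T|+\bar 1}(\G\ltimes_{(\rho^s)^*}(sV)^*)$. This already provides one even and one odd super $r$-matrix; the remaining task is to realize them in the single Lie superalgebra $\G\ltimes_{\rho^*+(\rho^s)^*}(V^*\oplus(sV)^*)$.

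To do so, I would observe that each summand $V^*$ and $(sV)^*$ is $\G$-stable inside $V^*\oplus(sV)^*$ under $\rho^*+(\rho^s)^*$, so there are natural injective Lie superalgebra homomorphisms
\[
\G\ltimes_{\rho^*}V^*\hookrightarrow \G\ltimes_{\rho^*+(\rho^s)^*}(V^*\oplus(sV)^*),\quad (x,v^*)\mapsto(x,(v^*,0)),
\]
\[
\G\ltimes_{(\rho^s)^*}(sV)^*\hookrightarrow \G\ltimes_{\rho^*+(\rho^s)^*}(V^*\oplus(sV)^*),\quad (x,su^*)\mapsto(x,(0,su^*)).
\]
The \supcybe is preserved under any injective Lie superalgebra homomorphism $\iota:\G_1\hookrightarrow\G_2$, since the induced map $U(\iota)^{\otimes 3}$ on triple tensor products of universal enveloping superalgebras is injective by the PBW theorem for Lie superalgebras, and carries $[[r,r]]$ in $U(\G_1)^{\otimes 3}$ to $[[r,r]]$ in $U(\G_2)^{\otimes 3}$. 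Hence the pushforwards of $r_T$ and $r_{T^s}$ along the two embeddings above remain solutions of the \supcybe, and together form the asserted parity pair, since $|r_T|=|T|$ and $|r_{T^s}|=|T|+\bar 1$ are of opposite parities.

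No serious obstacle is anticipated: the argument is essentially a bookkeeping assembly of Theorem~\ref{thm:2}, Theorem~\ref{thm:sV}, and the embedding invariance of the \supcybe. The only detail meriting explicit verification is the $\G$-stability of the two summands $V^*$ and $(sV)^*$, which is immediate from the definition of the direct sum representation $\rho^*+(\rho^s)^*$.
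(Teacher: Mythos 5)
Your proposal is correct, and it is essentially the alternative argument that the paper itself records in the one-sentence remark immediately \emph{after} its proof (``This conclusion can also be obtained from Theorem~\ref{cor:cons} and the fact that both $\G\ltimes_{\rho^*}V^*$ and $\G\ltimes_{(\rho^s)^*}(sV)^*$ are Lie subsuperalgebras of $\G \ltimes_{\rho^*+(\rho^s)^*}(V^*\oplus (sV)^*)$''). The paper's displayed proof takes a different route: it extends $T$ to $\Ext{T}\in\OO_{|T|}(\G;V\oplus sV,\rho+\rho^s)$ via Corollary~\ref{coro:3.13}, notes that $V\oplus sV$ is self-reversing, applies Corollary~\ref{coro:3.9} to get the odd partner $(\Ext{T})^s\phi$ on the \emph{same} representation, and then invokes Corollary~\ref{cor:equ} to land both super $r$-matrices in the common semi-direct product at once. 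That route buys a direct illustration of the self-reversing machinery of Section~\ref{ss:selfrev}; your route is more elementary, needing only Theorem~\ref{thm:2}, Theorem~\ref{thm:sV}, and functoriality of the \supcybe under Lie superalgebra embeddings. Two small remarks on your write-up: the invocation of PBW and injectivity is more than you need, since $[[r,r]]$ already lives in $\G^{\otimes 3}$ (not just in $U(\G)^{\otimes 3}$) and any Lie superalgebra homomorphism $\iota$ satisfies $[[\iota^{\otimes 2}r,\iota^{\otimes 2}r]]=\iota^{\otimes 3}[[r,r]]$, so the forward implication requires no injectivity at all; and you should note explicitly that the pushforwards of $r_T$ and $r_{T^s}$ under the two embeddings coincide with the tensors $\mT-(-1)^{|T|}\sigma(\mT)$ and $\mT^s-(-1)^{|T^s|}\sigma(\mT^s)$ as written in the statement, which is immediate since $V^*$ and $(sV)^*$ sit inside $V^*\oplus(sV)^*$ as the obvious summands.
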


\begin{proof}
For the self-reversing representation $(V \oplus sV, \rho+\rho^s)$ and $T\in \OO_{|T|}(\G;V,\rho)$, we have $\Ext{T}\in \OO_{|T|}(\G;V \oplus sV,
\rho+\rho^s)$ by Corollary~ \ref{coro:3.13} and $\widetilde{\Ext{T}}=(\Ext{T})^s\phi\in
\OO_{|T|+1}(\G;V\oplus sV, \rho+\rho^s)$ by Corollary~ \ref{coro:3.9}, where the isomorphism $\phi:V\oplus sV\ra sV\oplus V$ is given by $\phi(v,0)=(0,v), \phi(0,su)=(su,0)$ for all $v\in V$ and $su\in sV$. It follows from Corollary~\ref{cor:equ} that $r_{\Ext{T}}=r_T=\mT-(-1)^{|T|}\sigma(\mT)$ and $r_{\Ext{T}^s}=r_{T^s}=\mT^s-(-1)^{|T^s|}\sigma(\mT^s)$ are a pair of even and odd  \srs  in the Lie superalgebra $\G \ltimes_{\rho^*+(\rho^s)^*}(V^*\oplus (sV)^*)$.
\end{proof}

This conclusion can also be obtained from Theorem~\ref{cor:cons} and the fact that both $\G\ltimes_{\rho^*}V^*$ and $\G\ltimes_{(\rho^s)^*}(sV)^*$ are Lie subsuperalgebras of $\G \ltimes_{\rho^*+(\rho^s)^*}(V^*\oplus (sV)^*)$.

\section{Pre-Lie superalgebras and $\mathcal{O}$-operators of Lie superalgebras}
\label{sec:prelie}

We show that pre-Lie superalgebras provide an easy supply of both even and odd $\mathcal O$-operators of the sub-adjacent Lie superalgebras.
Consequently, any pre-Lie superalgebra gives rise to  both even and odd \srs  in (different) semi-direct product Lie superalgebras.

Pre-Lie superalgebras (also called left-symmetric superalgebras) are the $\Z_2$-graded version of pre-Lie algebras and have appeared in many  fields of mathematics and mathematical physics; see \cite{Bu,Man,TBGS,ZB} for the background and examples.

Let $\mathcal{A}=\mathcal{A}_{\bar
0}\oplus \mathcal{A}_{\bar 1}$ be a vector superspace. Suppose
that there is a binary operation on $\mathcal A$ such that
${\mathcal A}_\alpha\, {\mathcal A}_\beta  \subseteq {\mathcal
A}_{\alpha+\beta}$ for all $\alpha,\beta\in \mathbb Z_2$, then
${\mathcal A}$ is called a {\bf superalgebra}. If
${\mathcal A}_\alpha {\mathcal A}_\beta\subseteq {\mathcal
A}_{\alpha+\beta+\bar 1}$ for all $\alpha,\beta\in \mathbb Z_2$, then
the product is called {\bf odd}.

\begin{defn}
A superalgebra $\mathcal{A}=\mathcal{A}_{\bar 0}\oplus \mathcal{A}_{\bar 1}$ is called  a \textbf{pre-Lie superalgebra} if
the associator $(x,y,z):=(xy)z-x(yz)$ is supersymmetric in $x,y$, that is,
\begin{equation}
(x,y,z)=(-1)^{|x||y|}(y,x,z),
\end{equation}
or equivalently, $(xy)z-x(yz)=(-1)^{|x||y|}\big((yx)z-y(xz)\big)$ for all $x,  y, z\in \mathcal{A}$.
\end{defn}

Let $\mathcal A$ be a pre-Lie superalgebra. Then the supercommutator
\begin{equation}
[x,y]:=x y-(-1)^{|x||y|}y x,\;\;\forall x,  y\in \mathcal{A},
\end{equation}
defines a Lie superalgebra structure on $\mathcal{A}$, which is
called the \textbf{sub-adjacent Lie superalgebra} of $\mathcal{A}$
and denoted by $\G(\mathcal{A})$. In this case, $\mathcal{A}$ is
called the \textbf{compatible pre-Lie superalgebra} on
$\G(\mathcal{A})$.
  Define the left  multiplication operator $L(x)$   by $L(x)y:=xy$
for all $x,y \in \mathcal{A}. $ Then  the even
linear map $L:\G(\mathcal{A})\longrightarrow
\mathfrak{gl}(\mathcal{A})$ with  $x\mapsto L(x)$  gives rise to  a representation  $(\mathcal A, L)$ of the sub-adjacent Lie superalgebra $\G(\mathcal A)$, which is called the \textbf{left regular representation} of $\G(\mathcal A)$.

There is a close relationship between pre-Lie superalgebras and $\mathcal {O}$-operators of the sub-adjacent Lie superalgebras.

\begin{prop}\label{prop:3.22}
Let   $(V=V_{\bar 0}\oplus V_{\bar 1}, \rho)$ be   a representation of  a  Lie superalgebra   $\G$ and  $T\in \OO_{|T|}(\G;V, \rho)$. Define a product $\cdot$ on  the vector superspace $V$ by
\begin{equation}\label{eq:3.23}
v\cdot w:=(-1)^{|T|(|v|+|T|)}\rho(T(v))w, \;\;\forall v, w \in V.
\end{equation}
This product satisfies $V_\alpha \cdot V_\beta \subseteq V_{\alpha+\beta+|T|}$ for $\alpha, \beta\in \Z_2$ and
 \begin{equation}\label{eq:3.24}
(v, w, u)=(-1)^{(|v|+|T|)(|w|+|T|)}(w, v, u), \;\;\forall u, v, w\in V.
\end{equation}
When $T$ is even, Eq.~\eqref{eq:3.23} defines a pre-Lie superalgebra  on $V$. When $T$ is odd, the product $\cdot$ on $V$ is an odd product. Further, for the product $\circ$ on $sV$ defined by
\begin{equation}\label{eq:3.20}
sv\circ sw:=s(v\cdot w),\;\;  \forall v, w\in V,
\end{equation}
the pair $(sV,\circ)$ is a pre-Lie superalgebra.
\end{prop}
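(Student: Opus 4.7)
The proof has three parts: the grading statement, the associator supersymmetry Eq.~\eqref{eq:3.24}, and the specializations to even and odd $T$. The grading $V_\alpha\cdot V_\beta\subseteq V_{\alpha+\beta+|T|}$ is immediate because $T$ has degree $|T|$ and $\rho$ is even, so $\rho(T(v))w$ lies in $V_{|T|+|v|+|w|}$, and the scalar $(-1)^{|T|(|v|+|T|)}$ does not alter the parity. In particular, $\cdot$ is even when $|T|=\bar 0$ and odd when $|T|=\bar 1$.

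For Eq.~\eqref{eq:3.24}, the plan is to expand both associators directly and then reduce them via the $\OO$-operator identity Eq.~\eqref{eq:oop}. Unrolling Eq.~\eqref{eq:3.23} twice and using $|T|^2=|T|$, I would obtain
\[(v\cdot w)\cdot u = (-1)^{|T||w|+|T|}\rho\bigl(T(\rho(T(v))w)\bigr)u,\qquad v\cdot(w\cdot u) = (-1)^{|T|(|v|+|w|)}\rho(T(v))\rho(T(w))u.\]
Using Eq.~\eqref{eq:oop} to rewrite $T(\rho(T(v))w)$ as a combination of $[T(v),T(w)]$ and $T(\rho(T(w))v)$, together with the homomorphism identity $\rho([T(v),T(w)])=\rho(T(v))\rho(T(w))-(-1)^{(|T|+|v|)(|T|+|w|)}\rho(T(w))\rho(T(v))$, I expect that the two quantities
\[A:=(v\cdot w)\cdot u - (-1)^{(|v|+|T|)(|w|+|T|)}(w\cdot v)\cdot u,\qquad B:=v\cdot(w\cdot u)-(-1)^{(|v|+|T|)(|w|+|T|)}w\cdot(v\cdot u)\]
each collapse to $(-1)^{|T|(|v|+|w|)}\rho([T(v),T(w)])u$; their equality then yields Eq.~\eqref{eq:3.24}.

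With Eq.~\eqref{eq:3.24} in hand, the remaining claims follow quickly. If $T$ is even, Eq.~\eqref{eq:3.24} specializes to $(v,w,u)=(-1)^{|v||w|}(w,v,u)$, the defining identity of a pre-Lie superalgebra, so $(V,\cdot)$ is pre-Lie. If $T$ is odd, the product $\cdot$ is odd, hence not pre-Lie on $V$; but the \parityrev product $sv\circ sw:=s(v\cdot w)$ on $sV$ has $|sv\circ sw|=|v|+|w|=|sv|+|sw|$ and is therefore even. Its associator satisfies $(sv,sw,su)_\circ = s(v,w,u)$, and substituting $|T|=\bar 1$ in Eq.~\eqref{eq:3.24} gives $(v,w,u)=(-1)^{(|v|+\bar 1)(|w|+\bar 1)}(w,v,u)=(-1)^{|sv||sw|}(w,v,u)$, which is exactly the pre-Lie superalgebra axiom for $(sV,\circ)$.

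The main obstacle is sign-bookkeeping rather than anything conceptual: the expansion of the associator involves several sign exponents that must be combined via $|T|^2=|T|$ and $2ab\equiv 0\pmod 2$, and the cancellation of the two terms involving $T(\rho(T(w))v)$ inside $A$ is sensitive to the scalar $(-1)^{|T|(|v|+|T|)}$ built into the definition Eq.~\eqref{eq:3.23}. This prefactor appears precisely to match the signs arising from Eq.~\eqref{eq:oop}, so the calculation goes through cleanly but demands careful tracking of signs.
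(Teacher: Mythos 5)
Your proposal is correct and follows essentially the same route as the paper: expand both associators via Eq.~\eqref{eq:3.23} and cancel using the $\OO$-operator identity Eq.~\eqref{eq:oop} together with $\rho([T(v),T(w)])=\rho(T(v))\rho(T(w))-(-1)^{(|T|+|v|)(|T|+|w|)}\rho(T(w))\rho(T(v))$; your grouping into $A$ and $B$, each equal to $(-1)^{|T|(|v|+|w|)}\rho([T(v),T(w)])u$, is just a tidier packaging of the paper's single four-term cancellation, and both claimed identities for $A$ and $B$ check out. The grading argument, the even specialization, and the passage to $(sV,\circ)$ via $(sv,sw,su)_{\circ}=s\bigl((v,w,u)\bigr)$ with $(-1)^{(|v|+\bar 1)(|w|+\bar 1)}=(-1)^{|sv||sw|}$ likewise match the paper's proof.
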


\begin{proof}
For all $u, v, w\in V$,  it follows from Eq. ~(\ref{eq:3.23}) that
\begin{eqnarray*}
&&(v, w, u)=(v\cdot w)\cdot u-v\cdot(w\cdot u)\\
&=&(-1)^{|T|(|v|+|T|)}(\rho(T(v)) w)\cdot u-(-1)^{|T|(|w|+|T|)} v\cdot (\rho(T(w))u)\\
&=&(-1)^{|T|(|v|+|T|)+|T|(|v|+|w|)}\rho(T(\rho(T(v))w))u-(-1)^{|T|(|w|+|T|)+|T|(|v|+|T|)}\rho(T(v))\rho(T(w))u\\
&=&(-1)^{|T|(|w|+|T|)}\rho(T(\rho(T(v))w))u-(-1)^{|T|(|w|+|v|)}\rho(T(v))\rho(T(w))u.
\end{eqnarray*}
Similarly, we have
\begin{eqnarray*}
(w, v, u)
=(-1)^{|T|(|v|+|T|)}\rho(T(\rho(T(w))v))u -(-1)^{|T|(|w|+|v|)}\rho(T(w))\rho(T(v))u.
\end{eqnarray*}
Since $T\in \OO_{|T|}(\G;V, \rho)$,  we derive
\begin{eqnarray*}
&&(v, w, u)-(-1)^{(|v|+|T|)(|w|+|T|)}(w, v, u)\\
&=&(-1)^{|w||v|+|T|}(\rho(T(w))\rho(T(v))u-(-1)^{(|w|+|T|)(|v|+|T|)}\rho(T(v))\rho(T(w))u)\\
&&-(-1)^{(|v|+|T|)|w|}\rho(T(\rho(T(w))v))u+(-1)^{|T|(|w|+|T|)}\rho(T(\rho(T(v))w))u\\
&=&(-1)^{|w||v|+|T|}\rho\Big([T(w), T(v)]-(-1)^{|T|(|w|+|T|)}T(\rho(T(w))v)+(-1)^{|w|(|v|+|T|)}T(\rho(T(v))w)\Big)u\\
&=&0.
\end{eqnarray*}
When $T$ is even, it is obvious that Eq.~(\ref{eq:3.23}) defines a pre-Lie superalgebra  on $V$.
When $T$ is odd, we have $V_\alpha \cdot V_\beta \subseteq V_{\alpha+\beta+\bar 1}$. For all $sv\in (sV)_\alpha$ and $sw\in (sV)_\beta$, we have $v\in V_{\alpha-\bar 1}$ and $w\in V_{\beta-\bar 1}$. Thus $sv\circ sw =s(v\cdot w) \in (sV)_{\alpha+\beta}$, which implies $(sV)_{\alpha} \circ (sV)_{\beta} \subseteq (sV)_{\alpha+\beta}$. Furthermore, for all $su, sv, sw \in sV$, we have
\begin{eqnarray*}
&&(sv\circ sw)\circ su-sv\circ (sw\circ su)\\
&=&s((v\cdot w)\cdot u-v\cdot(w\cdot u))\\
&=&(-1)^{(|v|+\bar 1)(|w|+\bar 1)}s((w\cdot v)\cdot u-w\cdot(v\cdot u))\\
&=&(-1)^{|sv||sw|}((sw\circ sv)\circ su-sw\circ (sv\circ su)).
\end{eqnarray*}
Hence Eq.~\eqref{eq:3.20} defines a pre-Lie superalgebra structure on $sV$.
\end{proof}

\begin{remark}In fact, there are other odd
structures such as the Schouten bracket~\cite{JSc}, also
known as odd Poisson bracket~\cite{Ku2} and antibracket~\cite{GPS}, which is a prototypical odd product defined on
alternating multivector fields and makes them into a Gerstenhaber
algebra~\cite{Ge}.
\end{remark}

\begin{prop}\label{prop:3.23}
Let $(V, \rho)$ be a representation of a Lie superalgebra $\G$.
If $T$ is an odd (resp. even) $\mathcal {O}$-operator  of $\G$ associated to
$(V,\rho)$ and $T^s$ is  given by Eq.~\eqref{eq:3.10}, then
the pre-Lie superalgebras on $sV$ (resp. $V$)  obtained from $T$ and $T^s$ by Proposition~\ref{prop:3.22}  are the same.
\end{prop}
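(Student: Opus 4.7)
The plan is to verify the claim by two direct sign-tracking computations, one in each parity case, using only the definitions of $T^s$ (Eq.~\eqref{eq:3.10}), $\rho^s$ (Eq.~\eqref{eq:4.8}), and the two formulas in Proposition~\ref{prop:3.22}. The natural identification $s(sV)=V$ (with $s(sv)=v$) will be used throughout.

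\smallskip

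First I would treat the case when $T$ is odd. Then $T^s$ is even, so Proposition~\ref{prop:3.22} applied to $T^s$ produces an honest pre-Lie superalgebra on $sV$ with product
\[
sv\cdot' sw:=\rho^s(T^s(sv))\,sw,
\]
since $|T^s|=\bar 0$ eliminates the sign $(-1)^{|T^s|(|sv|+|T^s|)}$. On the other hand, Proposition~\ref{prop:3.22} applied to the odd $T$ yields the odd product $v\cdot w=(-1)^{|v|+\bar 1}\rho(T(v))w$ on $V$, and the induced pre-Lie product on $sV$ is $sv\circ sw=s(v\cdot w)$. Unwinding $\rho^s(T(v))sw=(-1)^{|T(v)|}s(\rho(T(v))w)=(-1)^{|v|+\bar 1}s(\rho(T(v))w)$ gives exactly $sv\cdot'sw=s(v\cdot w)=sv\circ sw$, so the two structures coincide.

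\smallskip

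Next I would handle the case $T$ even. Here Proposition~\ref{prop:3.22} gives the pre-Lie product $v\cdot w=\rho(T(v))w$ on $V$. Since $T^s$ is odd, the statement of the proposition says that $T^s$ produces the pre-Lie superalgebra on $s(sV)$, obtained from the odd product on $sV$
\[
sv\cdot'' sw:=(-1)^{\bar 1(|sv|+\bar 1)}\rho^s(T^s(sv))\,sw
\]
via the formula $s(sv)\circ' s(sw):=s(sv\cdot'' sw)$ from Eq.~\eqref{eq:3.20}. Using $|sv|=|v|+\bar 1$, $|T^s(sv)|=|T(v)|=|v|$ and $\rho^s(T(v))sw=(-1)^{|v|}s(\rho(T(v))w)$, a short sign count collapses the total sign to $+1$, yielding $sv\cdot'' sw=s(\rho(T(v))w)=s(v\cdot w)$. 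Applying $s$ and identifying $s(sV)$ with $V$ then gives $s(sv)\circ' s(sw)=v\cdot w$, i.e.~the product on $V$ coming from $T^s$ agrees with the one coming from $T$.

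\smallskip

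The computations are entirely mechanical; the only real point of care is keeping the four signs $(-1)^{|T|(|v|+|T|)}$ from Eq.~\eqref{eq:3.23}, $(-1)^{|x|}$ from Eq.~\eqref{eq:4.8}, the suspension shift $|sv|=|v|+\bar 1$, and the implicit $|T^s|=|T|+\bar 1$ consistent. I anticipate the main (very mild) obstacle to be the bookkeeping when $T$ is even, because the pre-Lie product from $T^s$ first lives on $sV$ as an odd product and only becomes a pre-Lie product after a second suspension, so one must be explicit about the identification $s^2=\mathrm{id}$ before comparing. Once these identifications are fixed, the equalities $sv\cdot' sw=sv\circ sw$ and $s(sv)\circ' s(sw)=v\cdot w$ drop out of the definitions and finish the proof.
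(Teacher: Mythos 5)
Your proposal is correct and follows essentially the same route as the paper: in the odd case you unwind the definitions of $T^s$, $\rho^s$, and the products of Proposition~\ref{prop:3.22} to see that both sides equal $(-1)^{|v|+\bar 1}s(\rho(T(v))w)$, which is exactly the paper's computation. The only difference is that you also write out the even case (with the identification $s(sV)=V$) in full, whereas the paper dismisses it with ``the even case can be similarly proved''; your sign bookkeeping there is accurate.
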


\begin{proof}
If $T$ is an odd $\mathcal {O}$-operator  of  $\G$ associated to
$(V,\rho)$, then $T^s$  is  an even  $\mathcal {O}$-operator  of  $\G$ associated to  $(sV,\rho^s)$ by Theorem~ \ref{cor:cons}.
It follows from Proposition \ref{prop:3.22} that  for all  $sv, sw\in sV$,
the pre-Lie superalgebras on $sV$ obtained from $T$ and $T^s$ via
$$sv\circ sw:=s(v\cdot w)=(-1)^{|v|+\bar 1}s(\rho(T(v))w)$$
and
$$sv\cdot sw:=\rho^s(T^s(sv))sw=\rho^s(T(v))sw=(-1)^{|v|+\bar 1}s(\rho(T(v))w)$$
are  the same. The even case can be similarly proved.
\end{proof}

\begin{coro}\label{coro:3.22}
Let $T$ be an  $\mathcal {O}$-operator  of  $\G$ associated to
$(V,\rho)$. Then there is an induced pre-Lie superalgebra   on the image $T(V)\subseteq \G$ of $T$ given by
\begin{equation}\label{eq:39}
T(v)\ast T(w):=T(v\cdot w), \;\;\forall v, w \in V,
\end{equation}
where the product $\cdot$ is defined in Eq.~\eqref{eq:3.23}.
\end{coro}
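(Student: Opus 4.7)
My plan is to transport the multiplication $\cdot$ from Proposition~\ref{prop:3.22} on $V$ through $T$ to a product $\ast$ on the image $T(V)\subseteq\G$, then deduce the pre-Lie axiom from the already-established associator identity Eq.~\eqref{eq:3.24}. The two things to check are well-definedness of $\ast$ on $T(V)$ and the pre-Lie superalgebra axiom for $\ast$.

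For well-definedness, by bilinearity it suffices to show that whenever $v\in V$ is homogeneous with $T(v)=0$, we have $T(v\cdot w)=T(w\cdot v)=0$ for every homogeneous $w\in V$. The first equality is immediate from the defining formula $v\cdot w=(-1)^{|T|(|v|+|T|)}\rho(T(v))w$, which vanishes when $T(v)=0$. For $T(w\cdot v)=0$ I would substitute $T(v)=0$ into the $\OO$-operator identity Eq.~\eqref{eq:oop} applied to the pair $(w,v)$: the bracket $[T(w),T(v)]$ on the left-hand side collapses to zero, and of the two terms on the right the one involving $T(\rho(T(v))w)$ also vanishes, leaving $T(\rho(T(w))v)=0$, i.e., $T(w\cdot v)=0$. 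This step is the one place where the $\OO$-operator structure genuinely enters rather than formal sign bookkeeping, and is the main obstacle.

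For the pre-Lie axiom, once $\ast$ is well-defined on $T(V)$, I would simply apply $T$ to both sides of Eq.~\eqref{eq:3.24}: the left-hand side becomes the $\ast$-associator $(T(u),T(v),T(w))_\ast=T((u\cdot v)\cdot w)-T(u\cdot(v\cdot w))$, and the right-hand side becomes $(-1)^{(|u|+|T|)(|v|+|T|)}(T(v),T(u),T(w))_\ast$. Since $|T(u)|=|T|+|u|$ and $|T(v)|=|T|+|v|$, the prefactor equals $(-1)^{|T(u)||T(v)|}$, yielding exactly the pre-Lie superalgebra axiom on $T(V)$. Finally, the grading is compatible: $|T(u\cdot v)|=|T|+|u|+|v|+|T|\equiv|T(u)|+|T(v)|\pmod{2}$, so the product $\ast$ is even in the $\Z_2$-grading (even when $T$ itself is odd). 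Thus $(T(V),\ast)$ is a genuine pre-Lie superalgebra rather than an odd one.
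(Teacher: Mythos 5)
Your proposal is correct and follows essentially the same route as the paper: well-definedness is reduced to showing $T(v)=0$ forces $T(w\cdot v)=0$ via the $\OO$-operator identity (the paper writes this as $T(w\cdot v)=[T(w),T(v)]=0$, which is your argument read in the other direction), and the pre-Lie axiom is obtained by applying $T$ to Eq.~\eqref{eq:3.24} and identifying the sign $(-1)^{(|u|+|T|)(|v|+|T|)}$ with $(-1)^{|T(u)||T(v)|}$. Your added remark that the transported product is even for the induced grading on $T(V)$ is a correct and worthwhile observation that the paper leaves implicit.
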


\begin{proof}
We first show that Eq.~\eqref{eq:39} is well-defined. We only need to verify that, if $T(v)=0$, then $T(v\cdot w)=T(w\cdot v)=0$ for all $w\in V$. In fact, if $T(v)$=0, then for any $w\in W$, we have
\begin{eqnarray*}
T(v\cdot w)&=&T\big((-1)^{|T|(|v|+|T|)}\rho(T(v))w\big)=0,\\
T(w\cdot v)&=& T\big((-1)^{|T|(|w|+|T|)}\rho(T(w))v\big)=T((-1)^{|T|(|w|+|T|)}\rho(T(w))v-(-1)^{|w|(|T|+|v|)}\rho\big(T(v))w\big)\\
&=&[T(w),T(v)]=0.
\end{eqnarray*}
Hence  Eq.~\eqref{eq:39} is well-defined.

For all $u, v, w \in V$, we have
\begin{eqnarray*}
&&(T(v)\ast T(w))\ast T(u)-T(v)\ast (T(w)\ast T(u))\\
&=&T((v\cdot w)\cdot u-v\cdot(w\cdot u))\\
&=&(-1)^{(|v|+|T|)(|w|+|T|)}T((w\cdot v)\cdot u-w\cdot(v\cdot u))\\
&=&(-1)^{(|v|+|T|)(|w|+|T|)}((T(w)\ast T(v))\ast T(u)-T(w)\ast (T(v)\ast T(u))).
\end{eqnarray*}
Hence the  conclusion holds.
\end{proof}

\begin{theorem}\label{prop:2.3}
Let  $(V, \rho)$ be a representation of
 a  Lie superalgebra $\G$ and $T\in \OO_{|T|}(\G;V, \rho)$. If $T$ is  invertible, then there exists a compatible pre-Lie
superalgebra structure on $\G$ defined by
\begin{equation}\label{eq:4.3}
x\ast y:=(-1)^{|T||x|}T\big(\rho(x)T^{-1}(y)\big),\;\;\forall x, y \in \G.
\end{equation}
Moreover, this pre-Lie
superalgebra coincides with the pre-Lie superalgebra obtained from the invertible $\OO$-operator $T^s$.

Conversely, if there exists a compatible pre-Lie superalgebra
structure $\mathcal A$ on $\G$, then the identity map ${\rm id}$
on $\G$ is an even $\mathcal O$-operator of $\G$ associated to the
left regular representation $(\mathcal A, L)$ of $\G=\G(\mathcal A)$.
Moreover, ${\rm id}^s$ is an odd
$\mathcal O$-operator of $\G$ associated to the representation
$(s{\mathcal A}, L^s)$.
\end{theorem}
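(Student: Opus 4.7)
The plan is to treat Theorem~\ref{prop:2.3} as two essentially bookkeeping claims, each built from machinery already in place: Proposition~\ref{prop:3.22}, Corollary~\ref{coro:3.22}, Proposition~\ref{prop:3.23} on the pre-Lie side, and Theorem~\ref{thm:sV} on the duality side. For the forward direction, I would start from Corollary~\ref{coro:3.22}, which already produces a pre-Lie superalgebra structure on the image $T(V)$ via $T(v)\ast T(w):=T(v\cdot w)$ with $v\cdot w$ as in Eq.~\eqref{eq:3.23}. Invertibility of $T$ gives $T(V)=\G$, so this is a pre-Lie structure on $\G$ itself. To recover Eq.~\eqref{eq:4.3}, I substitute $v=T^{-1}(x)$, $w=T^{-1}(y)$ so that $|v|=|x|+|T|$, compute $v\cdot w=(-1)^{|T|(|v|+|T|)}\rho(x)T^{-1}(y)$, and simplify the exponent using $|T|(|v|+|T|)=|T|(|x|+2|T|)\equiv|T||x|\pmod{2}$ before applying $T$.

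Next I would verify that the sub-adjacent Lie bracket $x\ast y-(-1)^{|x||y|}y\ast x$ agrees with the original bracket $[x,y]$ of $\G$, making $\ast$ a \emph{compatible} pre-Lie product. This is exactly the $\OO$-operator identity Eq.~\eqref{eq:oop} rewritten after substitution $v=T^{-1}(x), w=T^{-1}(y)$, once one checks the two parity identities $(|T|+|v|)|T|\equiv|T||x|$ and $|v|(|T|+|w|)\equiv|x||y|+|T||y|\pmod{2}$. For the coincidence with the pre-Lie structure obtained from $T^s$, I would apply the same formula~\eqref{eq:4.3} with $T$ replaced by the invertible odd $\OO$-operator $T^s$, using the descriptions $(T^s)^{-1}(y)=sT^{-1}(y)$ and $\rho^s(x)su=(-1)^{|x|}s(\rho(x)u)$; this produces an overall sign $(-1)^{(|T|+\bar1)|x|}\cdot(-1)^{|x|}=(-1)^{|T||x|}$ and the same $T(\rho(x)T^{-1}(y))$, matching $x\ast y$. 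Alternatively, the coincidence is an immediate transport along $T=T^s\circ s$ of the equality of pre-Lie products on $V$ (resp.\ $sV$) provided by Proposition~\ref{prop:3.23}.

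For the converse, given a compatible pre-Lie superalgebra structure $\mathcal A$ on $\G$, the definition of the sub-adjacent bracket reads $[x,y]=xy-(-1)^{|x||y|}yx=L(x)y-(-1)^{|x||y|}L(y)x$, and this is literally the identity Eq.~\eqref{eq:oop} for $T=\id$, $|T|=\bar 0$, and $\rho=L$; hence $\id$ is an even $\OO$-operator of $\G$ associated to $(\mathcal A,L)$. The assertion that $\id^s:s\mathcal A\to\G$ is an odd $\OO$-operator associated to $(s\mathcal A,L^s)$ then follows at once from the $\OO$-operator duality in Theorem~\ref{thm:sV} applied to $T=\id$. The main obstacle throughout is pure sign bookkeeping under the Koszul convention; no new structural input is required beyond the results already established.
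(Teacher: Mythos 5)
Your proposal is correct and follows essentially the same route as the paper: the forward direction via Proposition~\ref{prop:3.22} and Corollary~\ref{coro:3.22} with the substitution $x=T(v)$, $y=T(w)$, compatibility via the $\OO$-operator identity, the coincidence with $T^s$ by the same direct sign computation using $T^s(su)=T(u)$ and $\rho^s(x)su=(-1)^{|x|}s(\rho(x)u)$, and the converse by reading off Eq.~\eqref{eq:oop} for $T=\id$ and invoking the parity duality. No substantive differences to report.
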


\begin{proof}
Since $T$ is   invertible,  for all $x, y\in \G$, there exist $v, w\in
V$ such that $x=T(v), y=T(w)$ and $|x|=|T|+|v|, |y|=|T|+|w|$. It follows from Proposition~ \ref{prop:3.22} and Corollary ~\ref{coro:3.22} that there exists an induced pre-Lie superalgebra on $\G=T(V)$ given by
\begin{eqnarray*}
x\ast y= T(v)\ast T(w)=T(v \cdot w)=(-1)^{|T|(|v|+|T|)}T(\rho(T(v))w)=(-1)^{|T||x|}T(\rho(x)T^{-1}(y)).
\end{eqnarray*}
By the assumption that $T$ is in $\OO_{|T|}(\G;V, \rho)$, we have
\begin{equation*}
[x,y]=[T(v),T(w)]
= T((-1)^{(|T|+|v|)|T|}\rho(T(v))w- (-1)^{|v|(|T|+|w|)}\rho(T(w))v)
=x\ast y-(-1)^{|x||y|}y\ast x.
\end{equation*}
Hence Eq.~(\ref{eq:4.3}) defines a compatible pre-Lie superalgebra structure on $\G$.

By  Theorem~ \ref{cor:cons}, the linear map $T^s\in \OO_{|T|+\bar 1}(\G; sV, \rho^s)$ is also invertible.
Then there exists another compatible pre-Lie superalgebra structure $\ast_1$ on $\G$ obtained from $T^s$.  For $x=T(v)=T^s(sv)$ and $y=T(w)=T^s(sw)$,  we have
\begin{eqnarray*}
x\ast_1 y&=&T^s(sv)\ast_1 T^s(sw)=(-1)^{|T^s|(|sv|+|T^s|)}T^s(\rho^s(T^s(sv))sw)\\
&=&(-1)^{|T^s|(|sv|+|T^s|)+|T^s|+|sv|}T^s(s(\rho(T^s(sv))w))
=(-1)^{|T|(|v|+|T|)}T(\rho(T(v))w)\\
&=& (-1)^{|T||x|}T(\rho(x)T^{-1}(y))=x\ast y.
\end{eqnarray*}

Conversely, it is obvious that the identity map ${\rm id}$ is an
even $\mathcal O$-operator of $\G$ associated to $(\mathcal A, L)$
as in the Lie algebra case. Then the last statement follows
from Theorem~\ref{cor:cons}.
\end{proof}

We illustrate the above general results by the following example.
\begin{exam}\label{ex:3.20}
Let $\G=\G_{\bar 0}\oplus \G_{\bar 1}$ be the $1|1$-dimensional Lie superalgebra with
a homogeneous basis $\{e, f\}$, $e \in \G_{\bar 0}$ and $f\in \G_{\bar 1}$, whose non-zero product is given by $[f, f]=-2e.$
Let $V=V_{\bar 0}\oplus V_{\bar 1}$ be a $1|1$-dimensional vector superspace with a homogeneous basis $\{v, w\}$,  $v \in V_{\bar 0}$ and $w\in V_{\bar 1}$.  Consider the representation $\rho: \G \longrightarrow \Gl(V)$ defined by
 \begin{eqnarray*}
\rho(e) v=v,  \rho(e) w=w,  \rho(f) v=w,  \rho(f) w=-v.
\end{eqnarray*}
The linear map $T: V \longrightarrow \G$ given by
$T(v)=f$ and $T(w)=e$
 is an invertible odd   $\mathcal {O}$-operator  of  $\G$ associated to the representation $(V,\rho)$.
It follows from Proposition ~\ref{prop:3.22} that there is an odd product $\cdot$ on $V$ obtained from $T$ defined by
$$v\cdot v=-w, v\cdot w=v, w\cdot v=v, w\cdot w=w.$$
Then it gives a pre-Lie superalgebra $\A_V$ on $sV$:
$$sv\circ sv=-sw, sv\circ sw=sv, sw\circ sv=sv, sw\circ sw=sw.$$
Furthermore, by
Theorem~\ref{cor:cons}, the linear map
$T^s:sV\longrightarrow \G$ given by $T^s(sv)=f, T^s(sw)=e$ is an
invertible even $\mathcal O$-operator of $\G$ associated to the
\parityrev representation $(sV,\rho^s)$. It gives the same pre-Lie
superalgebra $\A_V$ on $sV$ by Eq. ~\eqref{eq:3.23}. Moreover, by
Theorem ~\ref{prop:2.3}, we obtain a compatible pre-Lie
superalgebra $\A$ on $\G$ from the invertible   $\mathcal
{O}$-operator $T$ given by
 $$e\ast e=e,  e\ast f=f, f\ast e=f, f\ast f=-e.$$
\end{exam}

Now let $\A$ be a $k|l$-dimensional pre-Lie superalgebra. Suppose that $\{e_1,\cdots, e_k, f_1, \cdots, f_l\}$ is a homogeneous basis of $\A$ and $\{e_1^*,\cdots, e_k^*, f^*_1,\cdots, f^*_l\}$ is  the  dual basis of $\A^*$  with $|e_i|=|e^*_i|=\bar  0$ and  $|f_j|=|f^*_j|=\bar 1$.
We use $\{sf_1, \cdots, sf_l, se_1, \cdots, se_k\}$ and  $\{(sf_1)^*, \cdots, (sf_l)^*, (se_1)^*, \cdots, (se_k)^*\}$ to denote a basis of $s\A$ and the dual basis of $(s\A)^*$ respectively with $|sf_j|=|(sf_j)^*|=\bar  0$ and  $|se_i|=|(se_i)^*|=\bar 1$ for $1\leqslant i\leqslant k$ and $1\leqslant j\leqslant l$.

By Theorem~\ref{prop:2.3} and Corollary~ \ref{co:rrs}, we have the following result.

\begin{prop}\label{coro:4.5}
With the above notations, a pre-Lie superalgebra $\A$ induces a
pair of even and  odd \srs  in a pair of semi-direct product Lie
superalgebras respectively as follows.
\begin{enumerate}
\item {\rm (\cite{WHB})}
$r_{\id}=\sum_{i=1}^k (e_i \otimes e_i^*-e^*_i\otimes e_i) +\sum_{j=1}^l(f_j\otimes f^*_j+ f_j^*\otimes f_j)\in \Sol_{\bar 0}(\G(\A) \ltimes_{L^*} \A^*);$
\item
$r_{\id^s}=\sum_{i=1}^k (e_i \otimes (se_i)^*+(se_i)^*\otimes e_i) +\sum_{j=1}^l (f_j\otimes (sf_j)^*+ (sf_j)^*\otimes f_j)\in \Sol_{\bar 1}(\G(\A) \ltimes_{(L^s)^*} (s\A)^*).$
\end{enumerate}
\end{prop}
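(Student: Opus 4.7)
The plan is to derive this proposition as a direct application of Theorem~\ref{prop:2.3} combined with Corollary~\ref{co:rrs} (equivalently, the \eqref{it:const}$\Leftrightarrow$\eqref{it:consr}$\Leftrightarrow$\eqref{it:consrs} chain of Theorem~\ref{cor:cons}). The content reduces to two steps, plus a sign-bookkeeping verification that the abstract formulas of Eqs.~\eqref{eq:expr} and \eqref{eq:exprs} specialize to the two displayed super $r$-matrices.

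First, I would invoke Theorem~\ref{prop:2.3}: the pre-Lie superalgebra structure on $\A$ makes the identity map $\id \colon \A \to \G(\A)$ an even $\OO$-operator of $\G(\A)$ associated to the left regular representation $(\A, L)$, and its parity dual $\id^s \colon s\A \to \G(\A)$ is an odd $\OO$-operator of $\G(\A)$ associated to $(s\A, L^s)$. Then, applying the $\OO$-operator-to-$r$-matrix construction of Theorem~\ref{thm:2} (equivalently, the equivalences \eqref{it:consr} and \eqref{it:consrs} of Theorem~\ref{cor:cons}), one immediately obtains $r_{\id}\in \Sol_{\bar 0}(\G(\A)\ltimes_{L^*}\A^*)$ and $r_{\id^s}\in \Sol_{\bar 1}(\G(\A)\ltimes_{(L^s)^*}(s\A)^*)$.

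It remains to unravel Eqs.~\eqref{eq:expr} and \eqref{eq:exprs} with $T=\id$ and homogeneous basis $\{e_1,\dots,e_k,f_1,\dots,f_l\}$ of $\A$. For (a), since $|\id|=\bar 0$, the exponent $(|\id|+\bar 1)(|v_i|+\bar 1)$ equals $\bar 1$ when $v_i=e_i$ (contributing the sign $-1$ in the $e_i\otimes e_i^*$ terms) and equals $\bar 0$ when $v_i=f_j$ (contributing $+1$ in the $f_j\otimes f_j^*$ terms), which is exactly the formula in (a). For (b), the exponent $|\id||v_i|=\bar 0$ is uniformly trivial, and Eq.~\eqref{eq:exprs} gives the symmetric expression in (b) with respect to the dual basis $\{(se_i)^*,(sf_j)^*\}$ of $(s\A)^*$.

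No step presents a genuine obstacle here: the deep inputs (existence of the even $\OO$-operator $\id$ attached to a pre-Lie superalgebra, the parity duality promoting it to $\id^s$, and the $\OO$-operator construction of super $r$-matrices) have all been established. The only point that requires a bit of care is the Koszul sign in Eq.~\eqref{eq:expr}, where one must remember the dual grading convention of Eq.~\eqref{eq:2.3} ($|e_i^*|=|e_i|$, $|f_j^*|=|f_j|$) so that the sign $(-1)^{(|\id|+\bar 1)(|v_i|+\bar 1)}$ correctly distinguishes the even basis vectors from the odd ones and reproduces the $-$ in the first sum and $+$ in the second sum of (a).
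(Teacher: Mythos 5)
Your proposal is correct and follows exactly the paper's route: the paper derives this proposition in one line from Theorem~\ref{prop:2.3} (giving $\id$ and $\id^s$ as the even and odd $\OO$-operators) together with Corollary~\ref{co:rrs}, and your sign bookkeeping for the exponents $(|\id|+\bar 1)(|v_i|+\bar 1)$ and $|\id||v_i|$ correctly reproduces the displayed formulas.
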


Therefore from any pre-Lie superalgebra $\A$,
one obtains a pair of even and odd \srs in the semi-direct product Lie superalgebras
 $\G(\A) \ltimes_{L^*} \A^*$ and $\G(\A) \ltimes_{(L^s)^*} (s\A)^*$ respectively.
In general,  these Lie superalgebras are not isomorphic.

However, if the left regular representation $(\mathcal A, L)$ of
the sub-adjacent Lie superalgebra $\G(\A)$ of a pre-Lie
superalgebra $\A$ is self-reversing, then the Lie superalgebras
$\G(\A) \ltimes_{L^*} \A^*$ and $\G(\A) \ltimes_{(L^s)^*} (s\A)^*$
will be isomorphic. In this case, from a pre-Lie superalgebra
$\A$, a parity pair of \pansym super $r$-matrices is
obtained in the same Lie superalgebra $\G(\A) \ltimes_{L^*} \A^*$.
We supply such an example to finish the paper.
\vspace{-.1cm}
\begin{exam}
Let $\A$ be the compatible  pre-Lie superalgebra  on the Lie superalgebra $\G$ given in Example~\ref{ex:3.20}.
It is clear that the invertible even linear map $\phi: \A\ra s\A$ defined by $\phi(e)=sf, \phi(f)=se$ gives
an isomorphism between the representations $(\mathcal A, L)$ and
$(s\mathcal A, L^s)$ of the sub-adjacent Lie superalgebra $\G=\G(\A)$
of the pre-Lie superalgebra $\A$, that is, the left regular representation
$(\mathcal A, L)$ of  $\G=\G(\A)$ is self-reversing. Hence
the Lie superalgebra $\G \ltimes_{(L^s)^*} (s\A)^*$  is isomorphic to $\G \ltimes_{L^*} \A^*$,
whose  non-zero products are
$$[f, f]=-2e,  [e,e^*]=-e^*,  [e,f^*]=-f^*,  [f,e^*]=f^*,  [f,f^*]=e^*,$$
where $\{e^*, f^*\}$ is the dual basis of $\A^*$.
By Proposition~\ref{coro:4.5}, the element
\begin{equation*}
r_{\id}=e \otimes e^*-e^*\otimes e +f\otimes f^*+ f^*\otimes f
\end{equation*} is an even skew-supersymmetric \sr in the Lie superalgebra  $\G
\ltimes_{L^*} \A^*$. Moreover, applying Theorem
~\ref{prop:3.10} to this example, we find that
$ r_{\id^s}= e \otimes f^* +f^*\otimes e+f\otimes
e^* + e^*\otimes f
$
is an odd supersymmetric \sr in the same Lie superalgebra  $\G
\ltimes_{L^*} \A^*$.
\end{exam}

{\bf Acknowledgments.} This work is supported by
 National Natural Science Foundation of China (11931009, 12271265, 12261131498), the Fundamental Research Funds for the Central Universities and Nankai Zhide Foundation. The authors thank the referee for helpful suggestions. 

\smallskip

\noindent
{\bf Declaration of interests. } The authors have no conflicts of interest to disclose.

\smallskip

\noindent
{\bf Data availability. } Data sharing is not applicable to this article as no new data were created or analyzed in this study.

\end{document}